%Time-stamp: <2015-04-23 12:38:09 aida>
\documentclass[11pt]{article}
\usepackage{latexsym}
\usepackage{amssymb}
\usepackage{amsmath}
\usepackage{amsthm}
\usepackage{bm}
\setlength{\topmargin}{0cm}
\setlength{\oddsidemargin}{0cm}
\setlength{\evensidemargin}{0cm}
\setlength{\textheight}{220mm}
\setlength{\textwidth}{16cm} 
\theoremstyle{plain}
\newtheorem{thm}{Theorem}[section]
\newtheorem{lem}[thm]{Lemma}

\newtheorem{rem}[thm]{Remark}
\newtheorem{defin}[thm]{Definition}

\newtheorem{pro}[thm]{Proposition}
\newtheorem{assumption}[thm]{Assumption}

\newcommand{\RR}{{\mathbb R}}

\newcommand{\ep}{\varepsilon}

\newcommand{\tr}{{\rm tr}}

\numberwithin{equation}{section}
\begin{document}
\newcounter{aaa}
\newcounter{bbb}
\newcounter{ccc}
\newcounter{ddd}
\newcounter{eee}
\newcounter{xxx}
\newcounter{xvi}
\newcounter{x}
\setcounter{aaa}{1}
\setcounter{bbb}{2}
\setcounter{ccc}{3}
\setcounter{ddd}{4}
\setcounter{eee}{32}
\setcounter{xxx}{10}
\setcounter{xvi}{16}
\setcounter{x}{38}
\title
{Reflected rough
differential equations\footnote{to appear in Stochastic Processes and
their applications, doi: 10.1016/j.spa.2015.03.008}}
\author{Shigeki Aida
%\footnote{
%This research was partially supported by
%Grant-in-Aid for Scientific Research (B) No.~24340023.}
\\
Mathematical Institute\\
Tohoku University,
Sendai, 980-8578, JAPAN\\
e-mail: aida@math.tohoku.ac.jp}
\date{}
% \begin{keywords}
%  reflected stochastic differential equation, rough path,
% Skorohod equation
% \end{keywords}
\maketitle
\begin{abstract}
In this paper, we study reflected differential equations 
driven by continuous paths with finite $p$-variation ($1\le p<2$) and
$p$-rough paths ($2\le p<3$) on domains in Euclidean spaces
whose boundaries may not be smooth.
We define reflected rough differential
equations and prove the existence of a solution.
Also we discuss the relation between the solution 
to reflected stochastic differential equation 
and reflected rough differential equation
when the driving process is a Brownian motion.

{\it Keywords:}\, reflected stochastic differential equation,
rough path, Skorohod equation
\end{abstract}

\section{Introduction}

In \cite{aida-sasaki}, we proved the
strong convergence of
the Wong-Zakai approximations of the solutions 
to reflected stochastic differential equations
defined on domains in Euclidean spaces
whose boundaries may not be smooth.
The driving stochastic process in the equation is a
Brownian motion.
Recently, many researchers have been studying 
differential equations driven by more general stochastic 
processes and irregular paths.
This is due to the development 
of rough path theory which gives new meaning of stochastic integrals.
% In view of this, it is natural to study reflecting differential
% equations driven by irregular paths or rough paths rather than 
% semi-martingales.
The aim of this paper is to study 
reflected differential equations driven by rough paths
and prove the existence of solutions.
We use the Euler approximation of 
the differential equations by modifying
the idea of Davie~\cite{davie}.
When the equation has reflection term, the Euler approximation 
becomes an implicit Skorohod equation and it is not trivial 
to see the existence of the solutions.
Hence, we need stronger assumptions than those given in
\cite{aida-sasaki}
on the boundary of the domain
to prove the existence of solutions.
At the moment, we neither have uniqueness of solutions nor continuity theorem
with respect to driving paths.

The paper is organized as follows.
In Section 2, we introduce several conditions on the boundary 
under which reflected rough differential equations are studied
and prepare necessary lemmas.
In Section 3, we study the reflected differential equations
driven by continuous path of finite $p$-variation with $1\le p<2$.
The meaning of the integral in this equation 
is justified by the Young integrals.
We prove the existence of solutions by using Davie's
approach~\cite{davie}.
This problem was already studied when $D$ is a half space in
\cite{ferrante-rovira}.
Our existence theorem is valid for more general domains.
In Section 4, we study the case where the driving path is
$p$-rough path with $2\le p<3$.
In this case, we consider stronger assumptions than that in previous sections.
First, we define reflected rough differential equations
and prove the existence of a solution and give some estimates of the solution.
Also we explain the reason of the difficulty to prove the uniqueness of
solutions and continuity theorems with respect to driving rough paths.
At the end of this section, we prove the existence of a measurable
solution mapping for geometric rough paths.
In Section 5, we go back to reflected SDEs driven by Brownian
motion.
We explain the relation between the solution to
reflected rough differential equation
and the solution to reflected stochastic differential equation.

\section{Preliminary}

First, we prepare necessary definitions and results 
for our purposes.
The following conditions on the connected domain $D\subset \RR^d$
are standard assumptions for reflected SDE and can be found 
in \cite{lions-sznitman, saisho, tanaka} and we will study our equations
on domains which satisfy these conditions.
We will introduce other conditions later.
For other references of reflected SDEs related with this paper,
we refer the readers to
\cite{aida-sasaki, zhang, doss, dupuis-ishii, evans-stroock,
pettersson, ren-xu1, ren-xu2, slominski1, slominski2}.
In \cite{aida},
we study Wong-Zakai approximations (\cite{wong-zakai})
in the two cases,
(i) the domain is convex,
(ii) the conditions
(A) and (B) are satisfied
which are not contained in the result in 
\cite{aida-sasaki}.

Recall that the set ${\cal N}_x$ 
of inward unit normal vectors at the boundary
point $x\in \partial D$ is defined by
\begin{align*}
 {\cal N}_x&=\cup_{r>0}{\cal N}_{x,r},\\
{\cal N}_{x,r}&=\left\{{\bm n}\in \RR^d~|~|{\bm n}|=1,
 B(x-r{\bm n},r)\cap D=\emptyset\right\},
\end{align*}
where $B(z,r)=\{y\in \RR^d~|~|y-z|<r\}$, $z\in \RR^d$, $r>0$.

\begin{defin}\label{basic definition}
\begin{enumerate}
 \item[{\rm (A)}] 
 There exists a constant
$r_0>0$ such that
\begin{align}
{\cal N}_x={\cal N}_{x,r_0}\ne \emptyset \quad \mbox{for any}~x\in
 \partial D.\nonumber
\end{align}

\item[{\rm (B)}]
There exist constants $\delta>0$ and $\beta\ge 1$
satisfying:

for any $x\in\partial D$ there exists a unit vector $l_x$ such that
\begin{align}
 (l_x,{\bm n})\ge \frac{1}{\beta}
\qquad \mbox{for any}~{\bm n}\in 
\cup_{y\in B(x,\delta)\cap \partial D}{\cal N}_y.\nonumber
\end{align}

\item[{\rm (C)}]
There exists a $C^2_b$ function $f$ on $\RR^d$ 
and a positive constant $\gamma$ such that 
for any $x\in \partial D$, $y\in \bar{D}$, ${\bm n}\in {\cal N}_x$
it holds that
\begin{align}
 \left(y-x,{\bm n}\right)+\frac{1}{\gamma}\left((D
 f)(x),\bm{n}\right)|y-x|^2\ge 0.\nonumber
\end{align}
\end{enumerate}
\end{defin}

We use the following quantities of paths $w_t$
as in \cite{aida-sasaki}.
\begin{align}
\|w\|_{\infty,[s,t]}&=\max_{s\le u\le v\le t}|w_u-w_v|,\\
\|w\|_{[s,t]}&=\sup_{\Delta}\sum_{k=1}^N|w_{t_{k}}-w_{t_{k-1}}|,
\label{total variation}
\end{align}
where $\Delta=\{s=t_0<\cdots<t_N=t\}$ is a partition 
of the interval $[s,t]$.
When the domain $D$ satisfies the conditions (A) and (B),
the Skorohod problem associated with a continuous path
$w\in C([0,T]\to\RR^d)$:
\begin{align}
 \xi_t&=w_t+\phi(t),\quad  \xi_t\in \bar{D}\quad 0\le t\le T,\label{SP1}\\
\phi(t)&=\int_0^t1_{\partial D}(\xi_s){\bm n}(s)d\|\phi\|_{[0,s]},\qquad
{\bm n}(s)\in {\cal N}_{\xi_s}~\mbox{if $\xi_s\in \partial D$}
\label{SP2}
\end{align}
can be uniquely solved.
See \cite{saisho}.
When the mapping $w\mapsto \xi$ is unique, we write
$\Gamma(w)_t=\xi_t$ and $L(w)(t)=\phi(t)$.
The following lemma can be proved
by a similar proof to
that of Lemma~2.3 in \cite{aida-sasaki}.

\begin{lem}\label{estimate on local time}
Assume conditions {\rm (A)} and {\rm (B)} hold.
Let $w_t$ be a continuous path of finite $p$-variation such that
\begin{align*}
 |w_t-w_s|\le \omega(s,t)^{1/p}\qquad 0\le s\le t\le T,
\end{align*}
where $p\ge 1$ and $\omega(s,t)$ is the control function of
$w_t$.
Then the local time term $\phi$ of the solution to the
Skorohod problem associated with $w$ has the following estimate.
\begin{align}
\|\phi\|_{[s,t]}
&\le \beta
\left(\left\{\delta^{-1}G(\|w\|_{\infty,[s,t]})
+1\right\}^{p}\omega(s,t)+1
\right)
\left(G(\|w\|_{\infty,[s,t]})+2\right)\|w\|_{\infty,[s,t]},
\label{estimate on local time 1}
\end{align}
where 
\begin{align*}
G(a)&=4\left\{1+\beta
\exp\left\{\beta\left(2\delta+a\right)/(2r_0)\right\}
\right\}\exp\left\{\beta\left(2\delta+a\right)/(2r_0)\right\},
\quad a\in\RR.
\end{align*}
\end{lem}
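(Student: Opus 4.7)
The plan is to mirror the argument of Lemma 2.3 in \cite{aida-sasaki} (which in turn follows Saisho), with two localized ingredients combined via a $p$-variation partition. The main input from condition (A) is the semiconvexity inequality
\[
 (y-x,{\bm n}) \ge -\frac{|y-x|^2}{2r_0} \qquad x\in\partial D,\ y\in\bar D,\ {\bm n}\in{\cal N}_x,
\]
which is the standard consequence of the interior-ball property with radius $r_0$. The role of condition (B) is to furnish, at each boundary point $x\in\partial D$, a single direction $l_x$ against which all normals in a $\delta$-neighborhood are uniformly transverse.

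First I would establish a \emph{local} estimate on any subinterval $[s,t]$ on which both $\|w\|_{\infty,[s,t]}$ and $\|\xi-w\|_{\infty,[s,t]}$ are smaller than $\delta/2$, so that the solution $\xi$ stays within distance $\delta$ of a fixed boundary point and (B) applies. On such an interval,
\[
 \|\phi\|_{[s,t]} = \int_s^t d\|\phi\|_{[0,u]} \le \beta\int_s^t (l_x,{\bm n}(u))\,d\|\phi\|_{[0,u]} = \beta\bigl(l_x,\phi(t)-\phi(s)\bigr)\le \beta\bigl(|\xi_t-\xi_s|+|w_t-w_s|\bigr).
\]
Next I would bound $|\xi_t-\xi_s|$ by $\|w\|_{\infty,[s,t]}$ times the factor $G(\|w\|_{\infty,[s,t]})$. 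This is the Gronwall step: applying the semiconvexity inequality to $(\xi_u-\xi_s)-(w_u-w_s)$ and integrating against $d\|\phi\|_{[0,u]}$ produces an inequality of the form $|\xi_u-\xi_s|\le |w_u-w_s| + (2r_0)^{-1}\int_s^u|\xi_v-\xi_s|^2 d\|\phi\|_{[0,v]}$, which together with the bound on $\|\phi\|_{[s,t]}$ by $\beta\cdot 2\delta$ valid a priori on the chosen subinterval, self-consistently yields the local bound $|\xi_t-\xi_s|\le (G(\|w\|_{\infty,[s,t]})+1)\|w\|_{\infty,[s,t]}$; hence on such an interval $\|\phi\|_{[s,t]}\le\beta(G+2)\|w\|_{\infty,[s,t]}$.

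Next I would \emph{globalize} by partitioning $[s,t]=s_0<s_1<\cdots<s_N=t$ so that each subinterval is admissible for the local estimate, i.e.\ so that $\|w\|_{\infty,[s_{i-1},s_i]}$ and $\|\xi-w\|_{\infty,[s_{i-1},s_i]}\le\delta/2$. By the local bound the latter is automatic once the former is at most $\delta/(2(G+1))$. Using the control-function hypothesis $|w_u-w_v|\le \omega(u,v)^{1/p}$, superadditivity of $\omega$ gives the standard count
\[
 N \le \left(\frac{2(G+1)}{\delta}\right)^{p}\omega(s,t) + 1 \le \bigl(\delta^{-1}G+1\bigr)^{p}\omega(s,t)+1
\]
(up to constants absorbed in $G$), and summing the local bound over the $N$ pieces yields the claimed inequality.

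The principal obstacle is the Gronwall step giving the exponential factor $G$: one must simultaneously control $|\xi-\xi_s|$ and the a priori local total variation $\|\phi\|_{[s,s_i]}$ on each candidate subinterval, iterating the two inequalities against each other. This is precisely where the explicit form of $G$ as a double exponential in $\beta(2\delta+\|w\|_{\infty})/(2r_0)$ arises, reflecting one exponential from the Gronwall lemma applied to $|\xi-\xi_s|^2$ and another from propagating the estimate back into the bound on $\|\phi\|$. Once this is established, the $p$-variation partitioning is routine and the proof follows the template of \cite{aida-sasaki} verbatim.
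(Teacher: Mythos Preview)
Your proposal is correct and follows exactly the route the paper indicates, namely the Saisho-type argument of Lemma~2.3 in \cite{aida-sasaki}: a local bound on $\|\phi\|$ via condition (B), a Gronwall estimate on $|\xi_u-\xi_s|$ via the semiconvexity from (A), and a globalization by a $p$-variation partition whose cardinality is controlled by superadditivity of $\omega$. One small slip: the displayed Gronwall inequality $|\xi_u-\xi_s|\le |w_u-w_s| + (2r_0)^{-1}\int_s^u|\xi_v-\xi_s|^2\,d\|\phi\|$ is dimensionally inconsistent as written---the correct form is quadratic on the left (one works with $|\xi_u-\xi_s|^2$ and uses $(\xi_v-\xi_s,{\bm n}(v))\le |\xi_v-\xi_s|^2/(2r_0)$ inside the integral), which then yields the exponential factor in $G$; with that correction the argument goes through verbatim.
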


\section{Reflected differential equations driven by 
continuous paths of finite $p$-variation with $1\le p<2$
}

Let $x_t$~$(0\le t\le T)$ be a continuous path of finite $p$-variation on
$\RR^n$ with the control function
$\omega(s,t)$, where $1\le p<2$.
We prove the existence of a solution $y_t$ which is also a continuous
path with finite $p$-variation to
the reflected differential equation driven by $x$:
\begin{align}
 y_t=y_0+\int_0^t\sigma(y_s)dx_s+\Phi(t), \qquad y_0\in \bar{D},
\label{rrde for p-variation path}
\end{align}
where $\sigma\in C^1_b(\RR^d, \RR^n\otimes \RR^d)$.
The integral in this equation is a Young integral~\cite{young}.
The following is a main result in this section.
See Remark~\ref{remark on main theorems}.

\begin{thm}\label{main theorem for p-variation path}
Assume that {\rm (A)}  and {\rm (B)} hold.
Then there exists a solution $(y,\Phi)$ to
$(\ref{rrde for p-variation path})$ and satisfies 
\begin{align}
|y_t-y_s|&\le C\omega(s,t)^{1/p} \label{estimate on y}\\
\|\Phi\|_{[s,t]}&\le C\omega(s,t)^{1/p}. \label{estimate on Phi}
\end{align}
Here $C$ is a constant which depends on $\omega(0,T)$
and $\sigma$ and $r_0, \beta, \delta$ in Definition~$\ref{basic definition}$.
\end{thm}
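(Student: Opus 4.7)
The plan is to construct the solution through a Davie-type Euler scheme in which the implicit reflection at each discrete step is handled by invoking the Skorohod well-posedness from \cite{saisho} cited after \eqref{SP2}. Fix a partition $\Delta=\{0=t_0<\cdots<t_N=T\}$, set $y^\Delta_0=y_0$, and define $y^\Delta$ recursively on $[t_k,t_{k+1}]$ to be the Skorohod solution $\Gamma(w^\Delta)$ driven by the continuous path
\[
w^\Delta_t=y^\Delta_{t_k}+\sigma(y^\Delta_{t_k})(x_t-x_{t_k}),\qquad t\in[t_k,t_{k+1}],
\]
with $\Phi^\Delta$ denoting the accumulated reflection term. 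Since $\sigma$ is bounded and $x$ is continuous, each subinterval Skorohod problem is uniquely solvable and the concatenation is continuous on $[0,T]$.

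The core step is to obtain a uniform $p$-variation bound: a constant $C$ depending only on $\omega(0,T)$, $\|\sigma\|_{C^1_b}$ and the boundary data $r_0,\beta,\delta$, such that
\[
|y^\Delta_t-y^\Delta_s|\le C\omega(s,t)^{1/p},\qquad \|\Phi^\Delta\|_{[s,t]}\le C\omega(s,t)^{1/p}
\]
for all mesh points $s\le t$, uniformly in $\Delta$. I would prove this by a self-improving bootstrap. On a subinterval $[s,t]$ of small $\omega$-mass, write $y^\Delta_t-y^\Delta_s$ as the sum of a Young Riemann-type sum $\sum\sigma(y^\Delta_{t_j})(x_{t_{j+1}}-x_{t_j})$ and a reflection increment. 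The Young estimate of \cite{young}, combined with the tentative hypothesis $\|y^\Delta\|_{p\text{-var},[s,t]}\le K\omega(s,t)^{1/p}$, bounds the Riemann sum by $C_1\omega(s,t)^{1/p}+C_2K\omega(s,t)^{2/p}$, and Lemma~\ref{estimate on local time} applied on the driver $w^\Delta$ over each sub-subinterval and summed bounds the reflection by a similar expression, since $\|w^\Delta\|_{\infty,[s,t]}$ itself obeys a $K$-dependent $\omega(s,t)^{1/p}$-bound. Because $2/p>1$, choosing $\omega(s,t)\le\eta$ for a small threshold $\eta$ makes the super-linear correction absorbable, and the bootstrap closes with $K$ independent of $\Delta$. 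Superadditivity of $\omega$ then tiles $[0,T]$ into finitely many such pieces and the estimate propagates.

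Once the uniform bounds are secured, Helly's selection theorem for paths of bounded $p$-variation---combined with the common H\"older-type modulus $\omega(\cdot,\cdot)^{1/p}$---extracts a subsequence along which $y^{\Delta_n}$ converges uniformly to some $y$ and $\Phi^{\Delta_n}$ converges pointwise to some $\Phi$ of bounded variation, both inheriting \eqref{estimate on y} and \eqref{estimate on Phi}. Continuity of the Young integral in the $p$-variation topology identifies $\int_0^{\cdot}\sigma(y_s)\,dx_s$ as the limit of the Riemann-type sums, while the Skorohod reflection condition \eqref{SP2} passes to the limit thanks to the uniform total-variation bound on $\Phi^{\Delta_n}$ and the uniform convergence of $y^{\Delta_n}$ to $\bar{D}$-valued $y$. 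Hence $(y,\Phi)$ solves \eqref{rrde for p-variation path}.

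The main obstacle is the bootstrap step: the local-time bound in Lemma~\ref{estimate on local time} depends exponentially on $\|w^\Delta\|_{\infty,[s,t]}$, so a naive iteration across many subintervals would let the implicit constants explode as the mesh refines. Working on subintervals of universally small $\omega$-mass and tracking the dependence of constants through $\omega(0,T)$ rather than through the number $N$ of subintervals is what keeps the bound uniform in $\Delta$ and yields the final estimate.
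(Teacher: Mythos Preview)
Your overall strategy matches the paper's: define an Euler scheme with Skorohod reflection on each partition interval, prove uniform $p$-variation bounds independent of the mesh, extract a convergent subsequence, and pass to the limit via continuity of the Young integral. The structure is right; the gap is in how the uniform reflection bound is obtained.

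The phrase ``Lemma~\ref{estimate on local time} applied on the driver $w^\Delta$ over each sub-subinterval and summed'' is where the argument fails. If you apply the local-time estimate separately on each $[t_j,t_{j+1}]\subset[s,t]$ and add, you obtain a sum of terms of order $\omega(t_j,t_{j+1})^{1/p}$; since $1/p<1$, for $N$ subintervals this is of order $N^{1-1/p}\omega(s,t)^{1/p}$, which blows up as the mesh refines and cannot be absorbed into your bootstrap. The reflection bound must be obtained on the whole interval at once. The paper does this by observing that $\Phi^\Delta(\cdot)-\Phi^\Delta(t_k)$ on $[t_k,t]$ is the Skorohod local time $L(z^\Delta)$ of a \emph{single} continuous path
\[
z^\Delta_u=y^\Delta_{t_k}+\sigma(y^\Delta_{t_k})(x_u-x_{t_k})+I^\Delta_{t_k}(u),
\]
where $I^\Delta_{t_k}(u)=y^\Delta_u-y^\Delta_{t_k}-\sigma(y^\Delta_{t_k})(x_u-x_{t_k})-(\Phi^\Delta(u)-\Phi^\Delta(t_k))$ is the Euler defect; Lemma~\ref{estimate on local time} is then applied once to $z^\Delta$ over $[s,t]$.

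This forces you to control the increments of $I^\Delta_{t_k}(\cdot)$, not just of $y^\Delta$, and the bootstrap hypothesis you state does not close that loop: controlling $z^\Delta$ requires controlling $I^\Delta$, which in turn involves $\Phi^\Delta$ itself. The paper resolves the circularity by a Davie-type induction on the number $K$ of mesh intervals spanned (Lemma~\ref{estimate on ydelta lemma 1}): assuming the estimates $|I^\Delta_{t_k}(t)|\le M\omega(t_k,t)^{\gamma/p}$ (with $p<\gamma\le 2$) and $\|\Phi^\Delta\|_{[s,t]}\le C_1\omega(s,t)^{1/p}$ hold whenever at most $K-1$ mesh intervals are spanned, one proves them across $K$ intervals using the additivity identity
\[
I^\Delta_s(u)-I^\Delta_s(t)-I^\Delta_t(u)=\bigl(\sigma(y^\Delta_t)-\sigma(y^\Delta_s)\bigr)(x_u-x_t)
\]
together with a dyadic splitting at the point $t_l$ where $\omega(t_k,t_l)$ crosses $\tfrac12\omega(t_k,t)$. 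Only after this induction is the small-interval estimate established (Lemma~\ref{estimate on ydelta lemma 1b}); the tiling argument you describe then globalizes it (Lemma~\ref{estimate on ydelta}), under the additional requirement that the partition be fine enough in the sense of~\eqref{assumption on the partition}. Your sketch correctly identifies the destination but omits the induction that actually gets there; without it the constants are not uniform in $\Delta$.

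A minor point on the limit step: the paper passes the reflection condition to the limit by invoking continuity of the Skorohod map $w\mapsto(\Gamma(w),L(w))$ from \cite{saisho}, after showing that the drivers $z^{\Delta_n}$ converge uniformly. Your proposal to verify \eqref{SP2} directly in the limit from uniform total-variation bounds is workable but requires more care than indicated.
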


We solve this equation by using the Euler approximation.
Let $\Delta : 0=t_0<t_1<\cdots<t_N=T$ be a partition of
$[0,T]$.
We define $y^{\Delta}$ by the solution to the Skorohod equation:
\begin{align*}
 y^{\Delta}_t&=y^{\Delta}_{t_{k-1}}+\sigma(y^{\Delta}_{t_{k-1}})(x_t-x_{t_{k-1}})+
\Phi^{\Delta}(t)-\Phi^{\Delta}(t_{k-1})\quad
t_{k-1}\le t\le t_k.
\end{align*}
Let
\begin{align}
 I^{\Delta}_{s}(t)&=y^{\Delta}_t-y^{\Delta}_{s}-
\sigma(y^{\Delta}_{s})(x_t-x_{s})-\left(\Phi^{\Delta}(t)-\Phi^{\Delta}(s)\right)
\quad s\le t.\label{Ideltast}
\end{align}
By the definition, we have
$
 I^{\Delta}_{t_k}(t)=0 
$ for all $t_k\le t\le t_{k+1}$
and
For any $s\le t\le u$,
\begin{align}
 I^{\Delta}_{s}(u)-I^{\Delta}_{s}(t)-I^{\Delta}_t(u)&=
\left(\sigma(y^{\Delta}_t)-\sigma(y^{\Delta}_s)\right)(x_u-x_t).\nonumber
\end{align}
%For simplicity we may omit the notation $\Delta$.
Also we write
$\pi^{\Delta}(t)=\max\{t_k~|~t_k\le t, 0\le k\le N\}$
for $0\le t\le T$.

In the following lemma, we use a constant in the estimate
(\ref{estimate on local time 1}).
Let $C_0$ be a positive number such that $C_0>1$ and
\begin{align}
\|\phi\|_{[s,t]}&\le
C_0\left(\omega(s,t)+1\right)\left(e^{C_0\omega(s,t)^{1/p}}+1\right)
\omega(s,t)^{1/p}\label{estimate on local time 2}
\end{align}
holds.
Hence for any positive $\delta$, if $\omega(s,t)$ is sufficiently small, 
$\|\phi\|_{[s,t]}\le (2+\delta)C_0\omega(s,t)^{1/p}$ holds.

\begin{lem}\label{estimate on ydelta lemma 1}
Let $1\le p<\gamma\le 2$.
Let $C_1=3C_0 \|\sigma\|_{\infty}$,
$C_2=1+4C_0\|\sigma\|_{\infty}$ and
$M=\frac{2C_2\|D\sigma\|_{\infty}}{1-2^{1-(\gamma/p)}}$.
For sufficiently small $\ep(\le 1)$
which depends only on $\|\sigma\|_{\infty}, \|D\sigma\|_{\infty}$ 
and $C_0$ such that
for any $t_k\le s\le t$ with
$\omega(t_k,t)\le \ep$,
\begin{align}
 |I^{\Delta}_{t_k}(t)|&\le
M\omega(t_k,t)^{\gamma/p} \label{Itkt}\\
\|\Phi^{\Delta}\|_{[s,t]}&\le C_1
\omega(s,t)^{1/p}.\label{Phitkt}
\end{align}
\end{lem}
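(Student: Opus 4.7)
The plan is to prove (3.6) and (3.7) simultaneously by induction on $N := \#\{j : t_k < t_j \le t\}$, the number of partition points of $\Delta$ contained in $(t_k, t]$. The two ingredients are the sewing identity for $I^\Delta$ displayed in the excerpt and the Saisho-type Skorohod estimate (3.5). Throughout, $\ep$ is chosen small enough — depending only on $\|\sigma\|_\infty$, $\|D\sigma\|_\infty$, and $C_0$ — so that all the $\omega$-dependent factors in (3.5) stay close to their small-$\omega$ limits.

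For the base case $N = 0$ (so $t \in (t_k, t_{k+1}]$), (3.6) is immediate since $I^\Delta_{t_k}(t) = 0$ by construction. For (3.7), on $[t_k, t_{k+1}]$ the pair $(y^\Delta, \Phi^\Delta)$ is the unique Skorohod solution driven by $u \mapsto y^\Delta_{t_k} + \sigma(y^\Delta_{t_k})(x_u - x_{t_k})$, whose $p$-variation is controlled by $\|\sigma\|_\infty^p \omega$. Applying (3.5) with this control and requiring $\ep$ small enough that $(\|\sigma\|_\infty^p \omega + 1)(e^{C_0 \|\sigma\|_\infty \omega^{1/p}} + 1) \le 3$ gives $\|\Phi^\Delta\|_{[s,t]} \le 3 C_0 \|\sigma\|_\infty \omega(s,t)^{1/p} = C_1 \omega(s,t)^{1/p}$.

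For the inductive step, I assume both estimates hold whenever the interval contains fewer than $N$ partition points. Following Davie, I select an interior partition point $t_m \in (t_k, t)$ chosen so that both sub-intervals are essentially half-sized (by superadditivity of $\omega$) and the induction hypothesis applies to both. The sewing identity
\begin{align*}
I^\Delta_{t_k}(t) = I^\Delta_{t_k}(t_m) + I^\Delta_{t_m}(t) + (\sigma(y^\Delta_{t_m}) - \sigma(y^\Delta_{t_k}))(x_t - x_{t_m})
\end{align*}
bounds the first two terms by roughly $M (\omega(t_k, t)/2)^{\gamma/p}$ each. For the third, the Euler identity $y^\Delta_{t_m} - y^\Delta_{t_k} = \sigma(y^\Delta_{t_k})(x_{t_m} - x_{t_k}) + (\Phi^\Delta(t_m) - \Phi^\Delta(t_k)) + I^\Delta_{t_k}(t_m)$, the inductive (3.7) on $[t_k, t_m]$, and the inductive bound on $I^\Delta_{t_k}(t_m)$ (with $M \omega^{(\gamma-1)/p}$ absorbed into slack by shrinking $\ep$) together yield $|y^\Delta_{t_m} - y^\Delta_{t_k}| \le C_2 \omega(t_k, t_m)^{1/p}$ with $C_2 = 1 + 4 C_0 \|\sigma\|_\infty$. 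Using $\omega(t_k, t_m)^{1/p} \omega(t_m, t)^{1/p} \le \omega(t_k, t)^{2/p} \le \omega(t_k, t)^{\gamma/p}$ (valid since $\gamma \le 2$ and $\omega \le 1$), the three pieces combine to
\begin{align*}
|I^\Delta_{t_k}(t)| \le 2^{1 - \gamma/p} M \omega(t_k, t)^{\gamma/p} + \|D\sigma\|_\infty C_2 \omega(t_k, t)^{\gamma/p},
\end{align*}
and the explicit value $M = 2 C_2 \|D\sigma\|_\infty / (1 - 2^{1 - \gamma/p})$ is chosen precisely so that the right-hand side is at most $M \omega(t_k, t)^{\gamma/p}$, closing (3.6). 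For (3.7) on a general $[s, t]$ with $t_k \le s$, I view $(y^\Delta, \Phi^\Delta)|_{[s,t]}$ as a Skorohod problem; its driving path's $p$-variation is controlled by the just-proved (3.6) combined with the sewing identity, and a second application of (3.5) with $\ep$ further shrunk if necessary yields $\|\Phi^\Delta\|_{[s,t]} \le C_1 \omega(s,t)^{1/p}$.

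The main obstacle is the quantitative bookkeeping of constants: $\ep$ must be chosen uniformly (independently of $\Delta$) so that the prefactors $(1 + O(\omega))$ and $(e^{O(\omega^{1/p})} + 1)$ appearing in (3.5) remain below the thresholds needed for the recursion to close with exactly the stated $C_1, C_2, M$. The factor $2$ in the numerator of $M$ is precisely the slack needed to absorb the cross-term $\|D\sigma\|_\infty C_2 \omega^{\gamma/p}$ into the contracting coefficient $2^{1 - \gamma/p}$ of the $M$-recursion.
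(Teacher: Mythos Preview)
There is a genuine gap in the inductive step for (3.6). You write the two-point sewing identity
\[
I^\Delta_{t_k}(t)=I^\Delta_{t_k}(t_m)+I^\Delta_{t_m}(t)+(\sigma(y^\Delta_{t_m})-\sigma(y^\Delta_{t_k}))(x_t-x_{t_m})
\]
and assert that a single partition point $t_m\in(t_k,t)$ can be chosen so that \emph{both} sub-intervals satisfy $\omega\le\tfrac12\omega(t_k,t)$, whence each of the first two terms is at most $M(\omega(t_k,t)/2)^{\gamma/p}$. This is false in general: superadditivity gives $\omega(t_k,t_m)+\omega(t_m,t)\le\omega(t_k,t)$, so if $t_m$ is the largest partition point with $\omega(t_k,t_m)\le\tfrac12\omega(t_k,t)$, the value $\omega(t_k,t_m)$ may be very small (when the next partition jump is large) and then $\omega(t_m,t)$ can be arbitrarily close to $\omega(t_k,t)$. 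The induction then only yields $|I^\Delta_{t_m}(t)|\le M\omega(t_m,t)^{\gamma/p}$, which is nearly $M\omega(t_k,t)^{\gamma/p}$, and the recursion does not close --- the bound $a^{\gamma/p}+b^{\gamma/p}\le(a+b)^{\gamma/p}$ leaves no strict room to absorb the cross term.

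The paper repairs exactly this point by a \emph{three}-point decomposition: with $t_l$ the largest partition point satisfying $\omega(t_k,t_l)\le\tfrac12\omega(t_k,t)$, one distinguishes the cases $t_l<\pi^\Delta(t)$ and $t_l=\pi^\Delta(t)$. In the first case $t_{l+1}\le\pi^\Delta(t)$, maximality forces $\omega(t_k,t_{l+1})>\tfrac12\omega(t_k,t)$, so superadditivity gives $\omega(t_{l+1},t)\le\tfrac12\omega(t_k,t)$, and crucially $I^\Delta_{t_l}(t_{l+1})=0$ since $[t_l,t_{l+1}]$ is a single partition cell. In the second case $I^\Delta_{t_l}(t)=0$ directly. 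Either way one of the $I$-terms vanishes and the remaining ones carry a genuine factor $2^{-\gamma/p}$; this is what produces the contraction coefficient $2^{1-\gamma/p}<1$ (and also explains why \emph{two} cross terms appear, matching the factor $2$ in the numerator of $M$). Your treatment of the base case and of (3.7) via the Skorohod estimate on $z^\Delta$ is in line with the paper, but the heart of the Davie argument --- getting strict contraction by exploiting that one sub-piece lies in a single cell --- is missing.
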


\begin{proof}
Note that if (\ref{Itkt}) and (\ref{Phitkt}) hold,
then by taking $\ep$ to be sufficiently small,
we have for $t$ with $\omega(t_k,t)\le \ep$,
\begin{align}
 |y^{\Delta}_t-y^{\Delta}_{t_k}|&\le
\left(M\ep^{(\gamma-1)/p}+3C_0\|\sigma\|_{\infty}+\|\sigma\|_{\infty}\right)
\omega(t_k,t)^{1/p}
\le C_2 \omega(t_k,t)^{1/p}.\label{conclusion of induction}
\end{align}
 Let $K$ be a positive integer.
Consider a claim which depends on $K$:
The estimates (\ref{Itkt}) and (\ref{Phitkt}) hold
for all $t_k$ and $t$,
where $t_k\le t\le t_{k+K}$ and $0\le k\le N-1$.
We prove this claim by an induction on $K$.
Let $K=1$.
Then $I_{t_k}^{\Delta}(t)=0$ for all $t_k\le t\le t_{k+1}$.
Also by taking $\ep$ to be sufficiently small,
\begin{align*}
 \|\Phi^{\Delta}\|_{[s,t]}&\le
3C_0\|\sigma\|_{\infty}\omega(s,t)^{1/p}
\quad \mbox{for $t_k\le s\le t,\,\omega(t_k,t)\le \ep$}.
\end{align*}
Suppose the claim holds for all $K$ which is smaller than or equal to
$K'-1$.
We prove the case $K=K'$.
Let $t_l$ be the largest partition point such that
$t_k\le t_l<t\le t_{k+K'}$ and
$\omega(t_k,t_l)\le \frac{1}{2}\omega(t_k,t)$.
There are two cases, 
(a) $t_l<\pi^{\Delta}(t)$ and
(b) $t_l=\pi^{\Delta}(t)$.
We consider the case (a).
In this case, $t_l<t_{l+1}\le \pi^{\Delta}(t)$.
By the definition,
we have
$\omega(t_k,t_{l+1})\ge\frac{1}{2}\omega(t_k,t)$.
By the superadditivity of $\omega$, we have
\begin{align*}
 \omega(t_{l+1},t)\le \frac{1}{2}\omega(t_k,t).
\end{align*}
We have
\begin{align*}
 |I_{t_k}^{\Delta}(t)|&\le
|I_{t_k}^{\Delta}(t_l)|+|I_{t_l}^{\Delta}(t_{l+1})|+
|I_{t_{l+1}}^{\Delta}(t)|+
|\sigma(y_{t_{l+1}}^{\Delta})-\sigma(y_{t_l}^{\Delta})||x_t-x_{t_{l+1}}|\nonumber\\
&\quad +|\sigma(y_{t_l}^{\Delta})-\sigma(y_{t_k}^{\Delta})|
|x_t-x_{t_l}|\nonumber
\end{align*}
By the assumption of the induction, we have
\begin{align*}
 |I_{t_k}^{\Delta}(t_l)|\le M\omega(t_k,t_l)^{\gamma/p},& \quad
|I_{t_{l+1}}^{\Delta}(t)|\le M\omega(t_{l+1},t)^{\gamma/p}\\
|\sigma(y_{t_{l+1}}^{\Delta})-\sigma(y_{t_l}^{\Delta})||x_t-x_{t_{l+1}}|&\le
C_2\|D\sigma\|_{\infty}\omega(t_l,t_{l+1})^{1/p}
\omega(t_{l+1},t)^{1/p}\\
|\sigma(y_{t_l}^{\Delta})-\sigma(y_{t_k}^{\Delta})||x_t-x_{t_l}|& \le C_2
\|D\sigma\|_{\infty}\omega(t_k,t_l)^{1/p}\omega(t_l,t)^{1/p}
\end{align*}
Therefore
\begin{align*}
 |I_{t_k}^{\Delta}(t)|&\le
M\left(2^{1-(\gamma/p)}+
(1-2^{1-(\gamma/p)})\ep^{(2-\gamma)/p}
\right)\omega(t_k,t)^{\gamma/p}\le M\omega(t_k,t)^{\gamma/p}.
\end{align*}
In the case of (b),
by using the assumption of the induction, we obtain
\begin{align*}
 |I_{t_k}^{\Delta}(t)|&\le
|I_{t_k}^{\Delta}(t_l)|+|I_{t_l}^{\Delta}(t)|+
|\sigma(y_{t_l})-\sigma(y_{t_k})||x_t-x_{t_l}|\nonumber\\
&\le M\omega(t_k,t_l)^{\gamma/p}+
C_2\|D\sigma\|_{\infty}\omega(t_k,t_l)^{1/p}\omega(t_l,t)^{1/p}\nonumber\\
&\le M\left(2^{-\gamma/p}+2^{-1}\left(1-2^{1-(\gamma/p)}\right)\ep^{(2-\gamma)/p}\right)
\omega(t_k,t)^{\gamma/p}\nonumber\\
&\le 2^{-1}M\omega(t_k,t)^{\gamma/p}.
\end{align*}
Next we show $\|\Phi^{\Delta}\|_{[t_k,t]}\le C_1\omega(t_k,t)^{1/p}$
for $t_k,s,t$ with
$t_k\le s\le t\le t_{k+K'}$ and $\omega(t_k,t)\le \ep$.
To this end, we note that 
$\Phi^{\Delta}(t)-\Phi^{\Delta}(t_k)=L(z^{\Delta})(t)$, where
$
z^{\Delta}_t=
I^{\Delta}_{t_k}(t)+y_{t_k}^{\Delta}+\sigma(y^{\Delta}_{t_k})(x_t-x_{t_k})$.
By (\ref{estimate on local time 2}),
it suffices to estimate 
$z^{\Delta}_t$.
Take $s, t$ such that
$t_k\le s<t\le t_{k+K'}$ and
$\omega(t_k,s)\le \ep, \omega(t_k,t)\le \ep$.
We estimate $I_{t_k}^{\Delta}(t)-I_{t_k}^{\Delta}(s)$ by using
\begin{align}
 I^{\Delta}_{t_k}(t)-I^{\Delta}_{t_k}(s)&=
I^{\Delta}_s(t)+\left(\sigma(y^{\Delta}_s)-\sigma(y^{\Delta}_{t_k})
\right)(x_t-x_s).\label{additivity}
\end{align}
Let $t_m$ be the largest number such that $t_m\le s$.
That is $t_m=\pi^{\Delta}(s)$.
Then we have two cases,
(a) $t_k\le t_m\le s<t_{m+1}<t$
and (b) $t_k\le t_m\le s<t\le t_{m+1}$.
First we consider the case (a).
We have
\begin{align*}
 I_s^{\Delta}(t)&=I_{s}^{\Delta}(t_{m+1})+I^{\Delta}_{t_{m+1}}(t)+
(\sigma(y^{\Delta}_{t_{m+1}})-\sigma(y^{\Delta}_s))(x_t-x_{t_{m+1}}).
\end{align*}
Since
$I_{s}^{\Delta}(t_{m+1})=-\left(\sigma(y^{\Delta}_s)-
\sigma(y^{\Delta}_{t_m})\right)(x_{t_{m+1}}-x_s)$,
we have
\begin{align*}
 |I_s^{\Delta}(t_{m+1})|&\le
C_2\|D\sigma\|_{\infty}\omega(t_m,s)^{1/p}\omega(s,t_{m+1})^{1/p}\le
C_2\|D\sigma\|_{\infty}\ep^{1/p}\omega(s,t)^{1/p}.
\end{align*}
By the hypothesis of the induction,
$|I_{t_{m+1}}^{\Delta}(t)|\le M\omega(t_{m+1},t)^{\gamma/p}
\le M\ep^{(\gamma-1)/p}\omega(s,t)^{1/p}$.
Also, 
\begin{align*}
 |(\sigma(y^{\Delta}_{t_{m+1}})-\sigma(y^{\Delta}_s))(x_t-x_{t_{m+1}})|&\le
2C_2\|D\sigma\|_{\infty}\omega(t_m,t_{m+1})^{1/p}\omega(t_{m+1},t)^{1/p}\nonumber\\
&\le 2\ep^{1/p}C_2\|D\sigma\|_{\infty}\omega(s,t)^{1/p}.
\end{align*}
By the assumption of the induction and (\ref{conclusion of induction}),
we have $|y^{\Delta}_s-y^{\Delta}_{t_k}|\le C_2\omega(t_k,s)^{1/p}$.
Hence
\begin{align*}
 |I_{s}^{\Delta}(t)|&\le 
\left(3C_2\|D\sigma\|_{\infty}\ep^{1/p}+
M\ep^{(\gamma-1)/p}\right)
\omega(s,t)^{1/p}\\
|I_{t_k}^{\Delta}(t)-I_{t_k}^{\Delta}(s)|&\le
\left(4C_2\|D\sigma\|_{\infty}\ep^{1/p}+
M\ep^{(\gamma-1)/p}\right)
\omega(s,t)^{1/p}
\end{align*}
and
\begin{align}
|z^{\Delta}_t-z^{\Delta}_s|&\le
\left(4C_2\|D\sigma\|_{\infty}\ep^{1/p}+
M\ep^{(\gamma-1)/p}+\|\sigma\|_{\infty}\right)
\omega(s,t)^{1/p}\label{estimate on zdelta 1}
\end{align}
We consider the case (b).
In this case, $I_{s}^{\Delta}(t)=-\left(\sigma(y^{\Delta}_s)
-\sigma(y^{\Delta}_{t_m})\right)
(x_t-x_s)$.
Noting
$m\le k+K'-1$ and using the assumption of the induction,
we have
\begin{align*}
|y^{\Delta}_s-y^{\Delta}_{t_k}|&\le
|y^{\Delta}_s-y^{\Delta}_{t_m}|+
|y^{\Delta}_{t_m}-y^{\Delta}_{t_k}|\\
&\le C_2\omega(t_m,s)^{1/p}+C_2\omega(t_m,t_k)^{1/p}\\
& \le 2C_2\ep^{1/p}.
\end{align*}
So, we have
\begin{align}
|z^{\Delta}_t-z^{\Delta}_s|
\le \|\sigma\|_{\infty}\omega(s,t)^{1/p}+
3C_2\|D\sigma\|_{\infty}\ep^{1/p}\omega(s,t)^{1/p}.
\label{estimate on zdelta 2}
\end{align}
(\ref{estimate on local time 2}), (\ref{estimate on zdelta 1}) and
 (\ref{estimate on zdelta 2}) implies that
for sufficiently small $\ep$
\begin{align*}
\|\Phi^{\Delta}\|_{[s,t]}&\le
3C_0\|\sigma\|_{\infty}\omega(s,t)^{1/p}
\quad \mbox{for $t_k\le s\le t$ with
$\omega(t_k,s)\le \ep, ~\omega(t_k,t)\le \ep$.
}
\end{align*}

\end{proof}

Actually, the proof of Lemma~\ref{estimate on ydelta lemma 1}
shows

\begin{lem}\label{estimate on ydelta lemma 1b}
Let $\Delta=\{t_k\}$ be a partition of $[0,T]$.
Let $C_1, C_2$ be the same numbers as in 
Lemma~$\ref{estimate on ydelta lemma 1}$.
Then for sufficiently small $0<\ep\le 1$ which depends only on
$\|\sigma\|_{\infty}, \|D\sigma\|_{\infty}$ and $C_0$ such that
for any $s,t$ with $\omega(t_k, s)\le \ep, \omega(t_k,t)\le \ep$
for some $t_k$, we have
\begin{align}
% \|\Phi^{\Delta}\|_{[s,t]}&\le C_1\omega(s,t)^{1/p},\\
|y^{\Delta}_t-y^{\Delta}_s|&\le C_2\omega(s,t)^{1/p}.
\end{align}
\end{lem}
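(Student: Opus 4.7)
The plan is to bootstrap directly from estimates already obtained (but not stated separately) inside the proof of Lemma~\ref{estimate on ydelta lemma 1}. From the definition (\ref{Ideltast}) of $I^{\Delta}_s(t)$ I read off the identity
\begin{align*}
y^{\Delta}_t - y^{\Delta}_s = I^{\Delta}_s(t) + \sigma(y^{\Delta}_s)(x_t - x_s) + \bigl(\Phi^{\Delta}(t) - \Phi^{\Delta}(s)\bigr),
\end{align*}
so it suffices to bound each of the three summands by a constant multiple of $\omega(s,t)^{1/p}$ under the hypothesis that $\omega(t_k,s) \le \ep$ and $\omega(t_k,t) \le \ep$ for some common partition point $t_k$.

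The drift term is trivially bounded by $\|\sigma\|_{\infty}\omega(s,t)^{1/p}$. For the local-time increment, inequality (\ref{Phitkt}) of Lemma~\ref{estimate on ydelta lemma 1}, applied on the sub-interval $[s,t]$, yields $|\Phi^{\Delta}(t)-\Phi^{\Delta}(s)| \le \|\Phi^{\Delta}\|_{[s,t]} \le C_1\omega(s,t)^{1/p}$. For $I^{\Delta}_s(t)$, I rerun the case analysis from the proof of Lemma~\ref{estimate on ydelta lemma 1}: setting $t_m = \pi^{\Delta}(s)$ and splitting into (a)~$t_m \le s < t_{m+1} < t$ and (b)~$t_m \le s < t \le t_{m+1}$, and applying the additivity identity (\ref{additivity}) together with the outputs (\ref{Itkt}) and (\ref{conclusion of induction}), one obtains
\begin{align*}
|I^{\Delta}_s(t)| \le \bigl(3C_2\|D\sigma\|_{\infty}\ep^{1/p} + M\ep^{(\gamma-1)/p}\bigr)\omega(s,t)^{1/p}
\end{align*}
in case (a), with a strictly smaller bound in case (b). This is precisely the computation that underlies the bounds (\ref{estimate on zdelta 1}) and (\ref{estimate on zdelta 2}) in Lemma~\ref{estimate on ydelta lemma 1}.

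Summing the three contributions gives a coefficient $C_1 + \|\sigma\|_{\infty} + O(\ep^{(\gamma-1)/p})$ in front of $\omega(s,t)^{1/p}$. Since $C_0 > 1$, the target constant satisfies $C_2 - (C_1 + \|\sigma\|_{\infty}) = 1 + 4C_0\|\sigma\|_{\infty} - (3C_0+1)\|\sigma\|_{\infty} = 1 + (C_0-1)\|\sigma\|_{\infty} > 0$, so by choosing $\ep$ small enough (depending only on $\|\sigma\|_{\infty}$, $\|D\sigma\|_{\infty}$, and $C_0$) this slack absorbs the $O(\ep^{(\gamma-1)/p})$ remainder, and we conclude $|y^{\Delta}_t - y^{\Delta}_s| \le C_2\omega(s,t)^{1/p}$.

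There is no genuine obstacle here; the main observation, which is really the content of the word \emph{Actually} in the statement, is that the estimates on $I^{\Delta}_s(t)$ and $\|\Phi^{\Delta}\|_{[s,t]}$ used inside the proof of Lemma~\ref{estimate on ydelta lemma 1} never required $s$ to be a partition point, so the present lemma is just the forgotten by-product of that argument and no new computation is needed.
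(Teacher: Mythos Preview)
Your proposal is correct and matches the paper's approach exactly: the paper gives no separate proof but simply notes that ``the proof of Lemma~\ref{estimate on ydelta lemma 1} shows'' this estimate, and you have faithfully unpacked precisely which pieces of that proof---the bound (\ref{Phitkt}) on $\|\Phi^{\Delta}\|_{[s,t]}$ and the case-analysis estimate on $I^{\Delta}_s(t)$ that precedes (\ref{estimate on zdelta 1}) and (\ref{estimate on zdelta 2})---combine to yield the claim, including the arithmetic check that $C_2 - C_1 - \|\sigma\|_{\infty} > 0$ leaves room to absorb the $\ep$-small remainder.
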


By Lemma~\ref{estimate on ydelta lemma 1b},
we can prove the following.

\begin{lem}\label{estimate on ydelta}
Let $\ep$ be a positive number in Lemma~$\ref{estimate on ydelta lemma 1b}$.
Let $\Delta=\{t_k\}_{k=0}^N$ be a partition of
$[0,T]$ such that 
\begin{align}
 \sup_{0\le k\le l\le N-1}|\omega(t_k,t_{l+1})-\omega(t_k,t_l)|\le \ep/2.
\label{assumption on the partition}
\end{align}
Then there exists $C>0$ such that
for any $0\le s\le t\le T$ the following estimates hold.
The constant $C$ depends only on
$\sigma$, $p$ and $D$.
\begin{enumerate}
\item[$(1)$] $|y^{\Delta}_t-y^{\Delta}_s|\le
	     C\left(1+\omega(0,T)\right)\omega(s,t)^{1/p}$
\item[$(2)$] $\|\Phi^{\Delta}\|_{[s,t]}\le C\left(1+\omega(0,T)\right)
\omega(s,t)^{1/p}$.
\end{enumerate}
\end{lem}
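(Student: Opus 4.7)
The plan is to upgrade the local bound of Lemma~\ref{estimate on ydelta lemma 1b}, which only applies when both $s$ and $t$ lie within $\omega$-distance $\ep$ of a common partition point, to a global estimate on an arbitrary subinterval $[s,t]\subseteq[0,T]$. This is done by decomposing $[s,t]$ into a chain of short pieces on each of which the local bound applies, and then summing. The key consequence of the partition hypothesis $(\ref{assumption on the partition})$ is the telescoping estimate $\omega(t_k,t_{k+j})\le j\ep/2$ for all $k$ and all $j\ge 0$; in particular a single partition step satisfies $\omega(t_k,t_{k+1})\le \ep/2$.

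I would construct the chain greedily. Fix $0\le s<t\le T$ and set $\sigma_0=s$. Having chosen $\sigma_l$, put $t_a=\pi^\Delta(\sigma_l)$ and let $\sigma_{l+1}$ be the largest partition point $t_{m^\ast}\le t$ with $\omega(t_a,t_{m^\ast})\le \ep$; if no such partition point lies strictly to the right of $\sigma_l$, or if the next one $t_{m^\ast+1}$ is already past $t$, set $\sigma_{l+1}=t$ and stop. This produces $s=\sigma_0<\sigma_1<\cdots<\sigma_M=t$. On each piece $[\sigma_l,\sigma_{l+1}]$ the base point $t_a\le\sigma_l$ satisfies $\omega(t_a,\sigma_l)\le\omega(t_a,t_{a+1})\le\ep/2$ and $\omega(t_a,\sigma_{l+1})\le\ep$, so Lemma~\ref{estimate on ydelta lemma 1b} together with the $\Phi^\Delta$-estimate inside the proof of Lemma~\ref{estimate on ydelta lemma 1} gives
\begin{align*}
|y^\Delta_{\sigma_{l+1}}-y^\Delta_{\sigma_l}|\le C_2\,\omega(\sigma_l,\sigma_{l+1})^{1/p},\qquad
\|\Phi^\Delta\|_{[\sigma_l,\sigma_{l+1}]}\le C_1\,\omega(\sigma_l,\sigma_{l+1})^{1/p}.
\end{align*}

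The combinatorial core is the upper bound on $M$. For every interior index $1\le l\le M-2$, $\sigma_l$ is itself a partition point so $t_a=\sigma_l$, and the greedy choice of $m^\ast$ forces the next partition point to violate the constraint, i.e.\ $\omega(\sigma_l,t_{m^\ast+1})>\ep$. Applying $(\ref{assumption on the partition})$ from the base $\sigma_l$ gives $\omega(\sigma_l,t_{m^\ast+1})-\omega(\sigma_l,t_{m^\ast})\le\ep/2$, hence $\omega(\sigma_l,\sigma_{l+1})=\omega(\sigma_l,t_{m^\ast})>\ep/2$. Super-additivity of $\omega$ then yields $(M-2)\,\ep/2<\omega(s,t)$, so $M\le 2\omega(s,t)/\ep+2\le C(1+\omega(0,T))$ with $C$ depending only on $\ep$, hence only on $\sigma$ and $D$.

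Summing the local bounds and using the concavity of $u\mapsto u^{1/p}$ (Jensen),
\begin{align*}
|y^\Delta_t-y^\Delta_s|+\|\Phi^\Delta\|_{[s,t]}
\le (C_1+C_2)\sum_{l=0}^{M-1}\omega(\sigma_l,\sigma_{l+1})^{1/p}
\le (C_1+C_2)\,M^{1-1/p}\,\omega(s,t)^{1/p},
\end{align*}
and bounding $M^{1-1/p}\le M\le C(1+\omega(0,T))$ yields both asserted estimates with a constant depending only on $\sigma$, $p$ and $D$. The delicate step is the lower bound $\omega(\sigma_l,\sigma_{l+1})>\ep/2$ for interior chain steps: because $\omega$ is only super-additive and not sub-additive, it is essential to measure growth from the fixed base point $\sigma_l$, which is precisely the direction in which the partition hypothesis $(\ref{assumption on the partition})$ gives control.
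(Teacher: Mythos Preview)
Your approach is essentially the paper's: decompose $[s,t]$ into a chain of short pieces on which Lemma~\ref{estimate on ydelta lemma 1b} (and the $\Phi^\Delta$ bound of Lemma~\ref{estimate on ydelta lemma 1}) applies, bound the number of pieces via superadditivity together with the hypothesis $(\ref{assumption on the partition})$, and sum. The paper carries this out with a single global subsequence $\{s_k\}\subset\Delta$ (set $s_{k+1}$ to be the first $t_i>s_k$ with $\omega(s_k,t_i)>\ep/2$, so $\omega(s_k,s_{k+1})\le\ep$ and $N'\le 1+2\omega(0,T)/\ep$), then for given $s<t$ inserts the $s_k$'s lying in $[s,t]$ and bounds each $\omega(\cdot)^{1/p}$ crudely by $\omega(s,t)^{1/p}$. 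Your adaptive chain and the Jensen step are a minor cosmetic variant of the same idea.

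There is one small gap in your argument. When you terminate with $\sigma_{l+1}=t$ because $t_{m^\ast+1}>t$, you assert $\omega(t_a,\sigma_{l+1})\le\ep$, but the greedy choice only gives $\omega(t_a,t_{m^\ast})\le\ep$; since $t_{m^\ast}\le t<t_{m^\ast+1}$, monotonicity and $(\ref{assumption on the partition})$ yield only $\omega(t_a,t)\le\omega(t_a,t_{m^\ast+1})\le\omega(t_a,t_{m^\ast})+\ep/2\le 3\ep/2$, which is not enough to invoke Lemma~\ref{estimate on ydelta lemma 1b} with base $t_a$. The fix is immediate: split the last piece at $t_{m^\ast}=\pi^\Delta(t)$ and use $t_k=t_{m^\ast}$ on $[t_{m^\ast},t]$ (where $\omega(t_{m^\ast},t)\le\ep/2$), at the cost of one extra term in the sum. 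The paper's fixed-subsequence device avoids this wrinkle automatically, since there $s_m\le t<s_{m+1}$ gives $\omega(s_m,t)\le\omega(s_m,s_{m+1})\le\ep$ by monotonicity.
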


\begin{proof}
[Proof of Lemma~$\ref{estimate on ydelta}$] 
We note that the statement is true if
$t_k\le s\le t\le t_{k+1}$ for some $k$
by Lemma~\ref{estimate on local time}.
Let us consider general cases.
We define a subsequence $\{s_k\}_{k=0}^{N'}$ of $\{t_k\}$ in the following way.
Let $s_0=t_0=0$.
When $s_k$ is defined, we define
$s_{k+1}$ is the smallest $t_i$ such that
$\omega(s_k,t_i)>\ep/2$ and $t_i>s_k$.
If there does not exist such a $t_{i}$, we set
$s_{k+1}=t_N$.
By the assumption (\ref{assumption on the partition}),
$\omega(s_k,s_{k+1})\le \max\left(\omega(s_k,t_{i-1})+\ep/2, \ep/2\right)\le \ep$.
Hence by Lemma~\ref{estimate on ydelta lemma 1b},
\begin{align*}
 |y^{\Delta}_{s_{k+1}}-y^{\Delta}_{s_k}|
\le C_2\omega(s_k,s_{k+1})^{1/p}.
\end{align*}
By the superadditivity of $\omega$, we have
\begin{align*}
 \omega(0,T)\ge\sum_{k=0}^{N'-1}\omega(s_k,s_{k+1})\ge
(N'-1)\ep/2
\end{align*}
which implies
$N'\le 1+2\omega(0,T)/\ep$.
For $0\le s<t\le T$, let
us choose the numbers $l, m$ so that
$s_l\le s<s_{l+1}\le s_m\le t< s_{m+1}$.
Then
\begin{align*}
 |y^{\Delta}_t-y^{\Delta}_s|&\le
|y^{\Delta}_t-y^{\Delta}_{s_m}|
+\sum_{k=l+1}^{m-1}|y^{\Delta}_{s_{k+1}}-y^{\Delta}_{s_k}|
+|y^{\Delta}(s_{l+1})-y^{\Delta}(s)|\nonumber\\
&\le C_2\omega(s_m,t)^{1/p}+\sum_{k=l+1}^{m-1}C_2\omega(s_k,s_{k+1})^{1/p}+
C_2\omega(s,s_{l+1})^{1/p}\nonumber\\
&\le C_2\left(2\omega(0,T)/\ep+3\right)\omega(s,t)^{1/p}.
\end{align*}
For $\Phi^{\Delta}$,  similarly, we have
\begin{align*}
\|\Phi^{\Delta}\|_{[s,t]}&=
\|\Phi^{\Delta}\|_{[s_m,t]}+
\sum_{k=l+1}^{m-1}\|\Phi^{\Delta}\|_{[s_k,s_{k+1}]}+
\|\Phi^{\Delta}\|_{[s,s_{l+1}]}\nonumber\\
&\le C_1\omega(s_m,t)^{1/p}+\sum_{k=l+1}^{m-1}C_1\omega(s_k,s_{k+1})^{1/p}+
C_1\omega(s,s_{l+1})^{1/p}\nonumber\\
&\le C_1\left(2\omega(0,T)/\ep+3\right)\omega(s,t)^{1/p}.
\end{align*}
These estimates complete the proof.
\end{proof}

\begin{proof}[Proof of Theorem~$\ref{main theorem for p-variation path}$]
Let us consider a sequence of partitions
$\Delta(n)=\{t(n)_k\}$ of $[0,T]$ such that
\begin{itemize}
 \item[(a)] the estimate $(\ref{assumption on the partition})$ holds for all $\Delta(n)$,
%\item[(b)] $\Delta(n+1)$ is a subdivision of $\Delta(n)$,
\item[(b)] $\lim_{n\to\infty}\max_{k\ge 0}|t(n)_{k+1}-t(n)_k|=0$.
\end{itemize}
These partitions exist because the mapping $(s,t)\mapsto \omega(s,t)$ is continuous.
By Lemma~\ref{estimate on ydelta}, there exists a subsequence
$y^{\Delta(n_k)}$ and $\Phi^{\Delta(n_k)}$ converge uniformly to 
continuous paths
$y^{\infty}$ and $\Phi^{\infty}$ respectively 
which also satisfy (\ref{estimate on y}) and (\ref{estimate on Phi}).
Then these subsequences converge in
$p'$-variation norm for any $p'>p$.
The solution $y^{\Delta(n_k)}$ satisfies
\begin{align*}
 y^{\Delta(n_k)}_t=y_0+\int_0^t
\sigma\left(y^{\Delta(n_k)}(\pi^{\Delta}(u))\right)dx_u+
\Phi^{\Delta(n_k)}(t).
\end{align*}
By taking the limit $n_k\to\infty$ and by the 
continuity theorem of Young integral and the continuity of the Skorohod map,
we see that $(y^{\infty},\Phi^{\infty})$ is a solution 
to the equation.
\end{proof}

Before closing this section, we make a simple remark on the 
continuity of the solution map $x\mapsto y$ when
$x$ is a bounded variation path.

\begin{rem}\label{remark on continuity}
{\rm
Let 
$x_t$ and $x'_t$ be continuous bounded variation paths
on $\RR^n$ starting at $0$.
Let $D$ be the domain which satisfies {\rm (A), (B), (C)}.
Let us consider two reflected ODEs and their solutions $y_t,y_t'$:
\begin{align*}
 y_t&=y_0+\int_0^t\sigma(y_s)dx_s+\Phi(t)\\
y'_t&=y_0+\int_0^t\sigma(y'_s)dx'_s+\Phi'(t)
\end{align*}
Let $m_t=|y_t-y'_t|^2e^{-\frac{2}{\gamma}\left(f(y_t)+f(y'_t)\right)}$.
Then by calculating $dm_t$ as in
\cite{lions-sznitman,saisho},
and by the Gronwall inequality, we obtain
\begin{align*}
 \sup_{0\le s\le t}|y_s-y'_s|&
\le Ce^{C'\left(\|x\|_{[0,t]}+\|x'\|_{[0,t]}\right)}
\|x-x'\|_{[0,t]}.
\end{align*}
This implies the solution map $x\mapsto y$ is a
Lipschitz continuous map between the set of bounded variation
 paths
and the set of continuous paths.
}
\end{rem}

\section{Reflected differential equations driven by 
$p$-rough path with $2\le p<3$
}

In this section, we prove the existence of a solution to
reflected differential equations driven by
rough path.
We mainly follow the formulation of rough path 
in~\cite{lyons98, lq, davie}.
See also
\cite{coutin-qian, friz-victoir0, friz-victoir, gubinelli}.
First, we define reflected differential equation driven by
rough path.

\begin{defin}\label{definition of rrde}
Let $D$ be a connected domain in $\RR^d$ for which 
the condition {\rm (A)} holds.
Let $2\le p<3$.
Let $X_{s,t}=(1,X^1_{s,t},X^2_{s,t})\in \Omega_p(\RR^n)$~$(0\le s\le t\le T)$
be a $p$-rough path.
Let $Y_{s,t}=(1,Y^1_{s,t},Y^2_{s,t})\in \Omega_p(\RR^d)$ 
be a $p$-rough path and
$\Phi(t)$ $(0\le t\le T)$ be a continuous bounded variation path
on $\RR^d$.
Let $\sigma\in C^2_b(\RR^d, \RR^n\otimes\RR^d)$.
The pair $(Y,\Phi)$ is called a solution to a
rough differential equation on $D$ driven by $X$ with normal reflection 
with the starting point $y_0\in \bar{D}$:
\begin{align}
  dY_t&=\sigma(Y_t)dX_t+d\Phi(t)\quad 0\le t\le T, \quad Y_0=y_0,\label{rrde}
\end{align}
if the following hold.
\begin{enumerate}
\item[$(1)$] 
Let $Y_t=y_0+Y^1_{0,t}$.
Then $Y_t\in \bar{D}$~$(0\le t\le T)$ and
it holds that
there exists a Borel measurable map
$s(\in [0,T])\mapsto \bm{n}(s)\in \RR^d$
such that 
$\bm{n}(s)\in {\mathcal N}_{Y_s}$ if
$Y_s\in \partial D$ and
\begin{align}
 \Phi(t)&=\int_0^t1_{\partial D}(Y_s)\bm{n}(s)d\|\Phi\|_{[0,s]}
\quad 0\le t\le T.\label{local time equation}
\end{align}

\item[$(2)$] 
$Y_{s,t}$ is a solution to the following rough differential equation.
\begin{align}
 dY_t&=\hat{\sigma}(Y_t)d\hat{X}_t\quad 0\le t\le T, \quad Y_0=y_0,\label{rde}
\end{align}
where 
$\hat{\sigma}(x)$ is a linear mapping from
$\RR^n\oplus \RR^d$ to $\RR^d$ defined by
$\hat{\sigma}(x)(\xi,\eta)=\sigma(x)\xi+\eta$
and
the driving rough path $\hat{X}\in \Omega_p(\RR^n\oplus \RR^d)$
is given by
\begin{align*}
 \hat{X}_{s,t}^1&=(X^1_{s,t},\Phi(t)-\Phi(s))\\
\hat{X}^2_{s,t}&=\Big(X^2_{s,t}, \int_s^tX^1_{s,u}\otimes d\Phi(u),
\int_s^t\left(\Phi(u)-\Phi(s)\right)\otimes dX^1_{s,u},
\int_s^t\left(\Phi(u)-\Phi(s)\right)\otimes d\Phi(u)\Big).
\end{align*}
\end{enumerate}
\end{defin}

Note that if $X_{s,t}$ is a rough path defined by a continuous path
$X_t$ of finite $q$-variation with $1\le q<2$, then the solution 
$Y_t$ coincides with the solution in the sense of Section 2.
Below, we assume $\sigma\in C^2_b$.
To solve this equation, 
we consider the Euler approximation modifying the Davies'
approximation for rough differential equations without reflection terms.
Let $\Delta : 0=t_0<t_1<\cdots<t_N=T$ be a partition of
$[0,T]$.
Let us consider a Skorohod problem :
\begin{align}
y^{\Delta} _t&=y^{\Delta}_{t_{k-1}}+
\sigma(y^{\Delta}_{t_{k-1}})(x_{t}-x_{t_{k-1}})+
(D\sigma)(y^{\Delta}_{{t_{k-1}}})(\sigma(y^{\Delta}_{t_{k-1}})X^2_{t_{k-1},t})
\nonumber\\
&\quad 
+(D\sigma)(y^{\Delta}_{t_{k-1}})\left(\int_{t_{k-1}}^t
(\Phi^{\Delta}(r)-\Phi^{\Delta}(t_{k-1}))\otimes dx_r\right)
+\Phi^{\Delta}(t)-\Phi^{\Delta}(t_{k-1})\nonumber\\
&\quad\qquad y^{\Delta}_t\in \bar{D}, \quad
y^{\Delta}_0=y_0, \quad t_{k-1}\le t\le t_{k},
\quad 1\le k\le N, \label{implicit skorohod equation}
\end{align}
where $x_t=X^1_{0,t}$.
That is, the pair $(y^{\Delta}_t, \Phi^{\Delta}_t-\Phi^{\Delta}_{t_{k-1}})$
is the solution to the Skorohod problem associated with
the continuous path
\begin{align*}
& y^{\Delta}_{t_{k-1}}+
\sigma(y^{\Delta}_{t_{k-1}})(x_{t}-x_{t_{k-1}})+
(D\sigma)(y^{\Delta}_{{t_{k-1}}})(\sigma(y^{\Delta}_{t_{k-1}})X^2_{t_{k-1},t})
\\
&\quad 
+(D\sigma)(y^{\Delta}_{t_{k-1}})\left(\int_{t_{k-1}}^t
(\Phi^{\Delta}(r)-\Phi^{\Delta}(t_{k-1}))\otimes dx_r\right)
\qquad t_{k-1}\le t\le t_k.
\end{align*}
Since this is an implicit Skorohod problem,
the existence of the solution 
is not trivial.
In view of this,
we consider the following condition (D) and assumptions 
(H1) and (H2) on
$D$.

\begin{assumption}
\begin{enumerate}
\item[{\rm (D)}]
Condition {\rm (A)} is satisfied and there exist constants
$K_1\ge 0$ and $0<K_2<r_0$ such that
\begin{align*}
 |\bar{x}-\bar{y}|\le
(1+K_1\ep)|x-y|
\end{align*}
holds for any $x,y\in \RR^d$ with
$|x-\bar{x}|\le K_2$, $|y-\bar{y}|\le K_2$,
where $\ep=\max\{|x-\bar{x}|, |y-\bar{y}|\}$.
Here $\bar{x}$ denotes the nearest point of $x$ in $\bar{D}$.

 \item[{\rm (H1)}] The condition {\rm (A)} holds and
the Skorohod problem 
$(\ref{SP1})$, $(\ref{SP2})$
is uniquely solved for any $w$.
Moreover, there exists a positive constant $C_D$
such that for all continuous paths $w$ on $\RR^d$
\begin{align*}
 \|L(w)\|_{[s,t]}&\le C_D\|w\|_{\infty,[s,t]} \quad 0\le s\le t\le T.
\end{align*}

\item[{\rm (H2)}] 
The condition {\rm (A)} holds and
the Skorohod problem $(\ref{SP1})$, $(\ref{SP2})$
is uniquely solved for any $w$.
Moreover, there exists a positive constant $C_D'$ such that
for all continuous paths $w,w'$ on $\RR^d$ 
\begin{align*}
 \|L(w)-L(w')\|_{\infty,[0,t]}\le
C_D'\left\{\|w-w'\|_{[0,t]}+|w(0)-w'(0)|\right\}.
\end{align*}
\end{enumerate} 
\end{assumption}

\begin{rem}{\rm
It is proved in \cite{tanaka} that
the condition (H1) holds 
if 
$D$ is convex and
there exists a unit vector $l\in \RR^d$ such that
$$
\inf\{(l,{\bm n}(x))~|~{\bm n}(x)\in {\cal N}_x, x\in \partial D\}>0.
$$
The condition (H2) holds if the conditions (B) and 
(D) are satisfied.
This is due to \cite{saisho}.}
\end{rem}

About the existence and uniqueness of solutions
to (\ref{implicit skorohod equation}),
we have the following.

\begin{lem}
Let $\eta_t$ be a continuous path on $\RR^d$ with $\eta_0=0$ and
$x_t$ be a continuous path of finite $p$-variation on $\RR^n$ with $x_0=0$
for some $p\ge 1$.
Let $F$ be a linear
mapping from $\RR^d\otimes \RR^n$ to $\RR^d$.
We consider the following implicit Skorohod equation:
\begin{align}
 y_t&=y_0+\eta_t+F\left(\int_0^t\Phi(r)\otimes dx_r\right)+\Phi(t)
\qquad y_0\in \bar{D}\quad 0\le t\le T,
\label{euler approximation of rrde0}
\end{align}
where $y_t\in \bar{D}$~$(0\le t\le T)$
and $\Phi(t)$ is a continuous bounded variation path
which satisfies 
$$
L\left(y_0+\eta_{\cdot}+F\left(\int_0^{\cdot}\Phi(r)\otimes
 dx_r\right)\right)(t)
=\Phi_t\quad 0\le t\le T,\qquad \Phi_0=0.
$$
\begin{enumerate}
 \item[$(1)$] Assume {\rm (H2)} are satisfied and
$x_t$ is bounded variation.
Then there exists a unique solution $(y_t, \Phi(t))$
to $(\ref{euler approximation of rrde0})$.
\item[$(2)$] Assume {\rm (H1)} holds.
There exists a solution $(y_t,\Phi(t))$
to $(\ref{euler approximation of rrde0})$.
\end{enumerate}
\end{lem}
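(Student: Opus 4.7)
The plan is to recast (\ref{euler approximation of rrde0}) as a fixed-point problem for a single map. For a continuous $\Phi:[0,T]\to\RR^d$ with $\Phi(0)=0$, define
\[
z_\Phi(t):=y_0+\eta_t+F\!\left(\int_0^t\Phi(r)\otimes dx_r\right),\qquad \Psi(\Phi):=L(z_\Phi),
\]
the integral being a Young integral, which is well defined because $\Phi$ will be of finite $1$-variation and $x$ of finite $p$-variation. A pair $(y,\Phi)$ solves (\ref{euler approximation of rrde0}) if and only if $\Phi$ is a fixed point of $\Psi$ and $y=z_\Phi+\Phi$. Both parts then reduce to producing such a fixed point.

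\textbf{Part (1).} Using (H2), the difference $z_\Phi-z_{\Phi'}=F(\int_0^\cdot(\Phi-\Phi')\otimes dx_r)$ satisfies the elementary Riemann--Stieltjes bound $\|z_\Phi-z_{\Phi'}\|_{[0,t]}\le\|F\|\|x\|_{[0,t]}\|\Phi-\Phi'\|_{\infty,[0,t]}$, so
\[
\|\Psi(\Phi)-\Psi(\Phi')\|_{\infty,[0,t]}\le C_D'\|F\|\|x\|_{[0,t]}\|\Phi-\Phi'\|_{\infty,[0,t]}.
\]
I would choose a partition $0=s_0<s_1<\cdots<s_M=T$ with $C_D'\|F\|\|x\|_{[s_{k-1},s_k]}<1/2$, possible by continuity of $s\mapsto\|x\|_{[0,s]}$. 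On each $[s_{k-1},s_k]$, fixing $\Phi(s_{k-1})$ from the previous step, $\Psi$ is a strict sup-norm contraction on $C([s_{k-1},s_k];\RR^d)$ and has a unique fixed point; concatenation yields the unique solution on $[0,T]$.

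\textbf{Part (2).} Without (H2) I would instead invoke Schauder's fixed-point theorem on a compact convex set of paths. From (H1) and the Young estimate one gets, on any subinterval $[s,t]$,
\[
\|\Psi(\Phi)\|_{[s,t]}\le C_D\|z_\Phi\|_{\infty,[s,t]}\le K_1\|\eta\|_{\infty,[s,t]}+K_2\|\Phi\|_{\infty,[0,T]}|x_t-x_s|+K_3\|\Phi\|_{[s,t]}\|x\|_{p,[s,t]},
\]
with constants $K_i$ depending only on $C_D$, $\|F\|$ and $p$. On subintervals short enough that $K_3\|x\|_{p,[s,t]}<1/2$, absorbing the $\|\Phi\|_{[s,t]}$ term closes the bound and produces a subadditive quantity $g(s,t)$, built from $\|\eta\|_{\infty,[s,t]}$ and $|x_t-x_s|$ alone, such that $|\Psi(\Phi)(t)-\Psi(\Phi)(s)|\le g(s,t)$ whenever $\|\Phi\|_{[0,T]}\le R$ (for $R$ obtained by iterating the same estimate across a finite partition selected from the $p$-variation control $\omega$ of $x$). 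Set
\[
\mathcal{K}:=\{\Phi\in C([0,T];\RR^d):\Phi(0)=0,\ \|\Phi\|_{[0,T]}\le R,\ |\Phi(t)-\Phi(s)|\le g(s,t)\}.
\]
Then $\mathcal{K}$ is convex, bounded and equicontinuous, hence sup-norm compact by Arzel\`a--Ascoli; the estimate above gives $\Psi(\mathcal{K})\subset\mathcal{K}$. Sup-norm continuity of $\Psi$ on $\mathcal{K}$ follows from sup-norm continuity of the Young integral on sets of bounded $1$-variation (obtained by interpolating between the uniform $1$-variation bound and sup-norm convergence) together with the standard sup-norm continuity of $L$ under unique solvability in (H1). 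Schauder's theorem then produces a fixed point, hence a solution.

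\textbf{Main obstacle.} The delicate step is the self-consistency of the estimate in part (2): because $\Phi$ enters both inside the Young integral defining $z_\Phi$ and as the output $L(z_\Phi)$, the bound on $\|\Psi(\Phi)\|_{[s,t]}$ contains $\|\Phi\|_{[s,t]}$ itself, and closing it forces me to work on subintervals whose lengths are dictated by $\omega$ and then patch the local estimates together in the spirit of Lemma~\ref{estimate on ydelta lemma 1}. Once this a priori control and the matching modulus $g$ are in place, the Schauder argument is routine.
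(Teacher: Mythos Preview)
Your Part~(1) coincides with the paper's: both use (H2) to make $\Psi$ a sup-norm contraction on short subintervals and then concatenate.

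For Part~(2) the paper proceeds differently. Rather than Schauder, it builds an explicit Euler approximation: on a partition $\Delta$ of a short interval $[0,T']$ one freezes $\Phi$ at the left endpoint inside the integrand, solves the resulting (genuine, not implicit) Skorohod problems, and obtains a family $(\Phi^\Delta)_\Delta$. Using (H1) and the elementary identity (\ref{Phideltast1})--(\ref{Phideltast2}) for $\int\Phi^\Delta(\pi^\Delta(r))\otimes dx_r$, one gets uniform bounds $\|\Phi^\Delta\|_{[s,t]}\le C\|\eta\|_{\infty,[s,t]}+C'\|x\|_{\infty,[s,t]}$, extracts a uniformly convergent subsequence by Arzel\`a--Ascoli, and passes to the limit via continuity of the Young integral and of the Skorohod map. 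The construction is then restarted from $y_{T'}$. Your Schauder route and the paper's Euler route are both compactness arguments on a short interval followed by iteration; the paper's is constructive and yields the approximation scheme needed later in the article, while yours is cleaner abstractly.

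There is, however, a real gap in your Schauder set-up. The ``absorbing'' step---moving $K_3\|\Phi\|_{[s,t]}\|x\|_{p,[s,t]}$ from the right into the left---is only valid at a fixed point, where $\Psi(\Phi)=\Phi$. For the inclusion $\Psi(\mathcal K)\subset\mathcal K$ you must bound $\|\Psi(\Phi)\|_{[s,t]}$ for \emph{arbitrary} $\Phi\in\mathcal K$, and there the left side carries $\Psi(\Phi)$ while the right carries $\Phi$; nothing can be absorbed. What you can legitimately do is use the global constraint $\|\Phi\|_{[s,t]}\le\|\Phi\|_{[0,T]}\le R$, giving a modulus $g(s,t)$ that depends on $R$ (not ``on $\|\eta\|_{\infty,[s,t]}$ and $|x_t-x_s|$ alone''); this handles equicontinuity. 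But the remaining requirement $\|\Psi(\Phi)\|_{[0,T]}\le R$ then forces $C_D\|F\|(\|x\|_{\infty,[0,T]}+C_p\|x\|_{p,[0,T]})<1$, which need not hold on the full interval and cannot be recovered by summing over a partition (the number of pieces grows as the pieces shrink). The fix is exactly what the paper does for its Euler scheme: run the argument on $[0,T']$ with $T'$ chosen so that $\|x\|_{\infty,[0,T']}$ is small relative to $(C_D\|F\|)^{-1}$, obtain the fixed point there, and then restart from $y_{T'}$ with the new drift $\tilde\eta_t=(\eta_t-\eta_{T'})+F(\Phi(T')\otimes(x_t-x_{T'}))$. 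Since $T'$ depends only on $C_D,\|F\|$ and $x$, finitely many steps cover $[0,T]$.
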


\begin{proof}
(1)~
By (H2),
we see the unique existence of $\Phi$,
by a standard iteration procedure on continuous path spaces with the
norm $\|~\|_{\infty,[0,T]}$
considering the equation in the small interval, if necessary.
This arguments produce the solution for the whole interval
$[0,T]$.

\noindent
(2)
First we prove the existence of a solution
on a small interval $[0,T']$, where $T'<T$.
We specify $T'$ later.
Let $\Delta=\{t_k\}_{k=0}^N$ be a partition of $[0,T']$.
We consider the Euler approximation of $y$.
\begin{align*}
y^{\Delta}_t&=y^{\Delta}_{t_k}+\eta_t-\eta_{t_k}+F\left(\Phi^{\Delta}(t_k)\otimes
 (x_t-x_{t_k})\right)
+\Phi^{\Delta}(t)-\Phi^{\Delta}(t_k)
\quad t_k\le t\le t_{k+1}.
\end{align*}
That is, $y^{\Delta}, \Phi^{\Delta}$ satisfies
\begin{align*}
 y^{\Delta}_t&=y_0+\eta_t+
F\left(\int_0^t\Phi^{\Delta}
(\pi^{\Delta}(r))\otimes dx_r\right)+\Phi^{\Delta}(t)
\qquad ~ 0\le t\le T'.
\end{align*}
Let $0\le s<t\le T'$.
If $t_{k-1}\le s<t\le t_k$ for some $k$, then
\begin{align}
 \int_s^t\Phi^{\Delta}(\pi^{\Delta}(r))\otimes dx_r&=
\Phi^{\Delta}(t_{k-1})\otimes (x_t-x_s).\label{Phideltast1}
\end{align}
We consider the case
where $0\le t_{k-1}\le s<t_k<\cdots<t_l\le t<t_{l+1}\le T'$.
Then 
\begin{align}
\int_s^t 
\Phi^{\Delta}(\pi^{\Delta}(r))\otimes dx_r&=
\Phi^{\Delta}(t_{k-1})\otimes(x_{t_k}-x_s)+
\Phi^{\Delta}(t_l)\otimes(x_t-x_{t_l})\nonumber\\
&\quad +\Phi^{\Delta}(t_{l-1})\otimes x_{t_l}-
\Phi^{\Delta}(t_k)\otimes x_{t_k}\nonumber\\
&\quad +
\sum_{m=k}^{l-2}\left(
\Phi^{\Delta}(t_m)-\Phi^{\Delta}(t_{m+1})\right)\otimes x_{t_{m+1}}.
\label{Phideltast2}
\end{align}
Therefore we have for all $0\le s<t\le T'$,
\begin{align}
\left|\int_s^t\Phi^{\Delta}(\pi^{\Delta}(r))\otimes dx_r\right| 
&\le
3\|\Phi^{\Delta}\|_{[0,T']}\|x\|_{\infty,[s,t]}+
2\|\Phi^{\Delta}\|_{[s,t]}\|x\|_{\infty,[0,T']}\label{estimate on integral}\\
&\le
5\|\Phi^{\Delta}\|_{[0,T']}\|x\|_{\infty,[0,T']}.\nonumber
\end{align}
Hence by (H1),
\begin{align*}
\|\Phi^{\Delta}\|_{[0,T']}&\le
C_D\left(\|\eta\|_{\infty,[0,T']}+
5\|F\|\|x\|_{\infty,[0,T']}\|\Phi^{\Delta}\|_{[0,T']}
\right).
\end{align*}
Therefore if $\|x\|_{\infty,[0,T']}\le 1/\left(10C_D\|F\|\right)$,
\begin{align}
 \|\Phi^{\Delta}\|_{[0,T']}&\le 2C_D\|\eta\|_{\infty, [0,T']}.
\label{estimate on Phidelta2}
\end{align}
Substituting this into (\ref{estimate on integral}),
we obtain for any $0\le s\le t\le T'$,
\begin{align}
\left|\int_s^t 
\Phi^{\Delta}(\pi^{\Delta}(r))\otimes dx_r\right|&\le
6C_D\|\eta\|_{\infty,[0,T']}\|x\|_{\infty,[s,t]}
+2\|\Phi^{\Delta}\|_{[s,t]}\|x\|_{\infty,[0,T']}.
 \end{align}
Hence, again by applying (H1), we obtain
\begin{align}
 \|\Phi^{\Delta}\|_{[s,t]}&\le
C_D\|\eta\|_{\infty,[s,t]}+
6C_D^2\|F\|\,\|\eta\|_{\infty,[0,T']}\|x\|_{\infty,[s,t]}
+2C_D\|F\|\, \|\Phi^{\Delta}\|_{[s,t]}\|x\|_{\infty,[0,T']}.
\end{align}
Consequently,
if 
\begin{align}
 \|x\|_{\infty,[0,T']}\le(10C_D\|F\|)^{-1}
\label{small time interval}
\end{align}
then
\begin{align*}
 \|\Phi^{\Delta}\|_{[s,t]}&\le\frac{5}{4}
C_D\|\eta\|_{\infty,[s,t]}+
10C_D^2\|F\|\,\|\eta\|_{\infty,[0,T']}\|x\|_{\infty,[s,t]}
\qquad 0\le s\le t\le T'.
\end{align*}
Now we choose $T'$ so that 
(\ref{small time interval}) holds.
Then $\{\Phi^{\Delta}\}_{\Delta}$ is a family of
equicontinuous and bounded functions on $[0,T']$
and so there exists a sequence $|\Delta_n|\to 0$ such that
$\Phi^{\Delta_n}$ converges to a certain $\Phi$ uniformly on $[0,T']$.
By the estimate (\ref{estimate on Phidelta2}),
this convergence takes place for all $p$-variation norm ($p>1$) on 
$[0,T']$.
Therefore $F\left(\int_0^{t}\Phi^{\Delta_n}
(\pi^{\Delta_n}(r))\otimes dx_r\right)$ 
converges uniformly to
$F\left(\int_0^t\Phi(r)\otimes dx_r\right)$.
Here we use the property of Young integrals.
Also $y^{\Delta_n}_t$ converges uniformly.
We denote the limit by $y$.
Then $(y_t,\Phi(t))$~$(0\le t\le T')$ is a 
solution to (\ref{euler approximation of rrde0}).
Next, we need to construct a solution after time $T'$.
For $t\ge T'$, 
(\ref{euler approximation of rrde0}) reads
\begin{align}
 y_t&=y_{T'}+
(\eta_t-\eta_{T'})+F\left(\Phi_{T'}\otimes(x_t-x_{T'})\right)\nonumber\\
&\quad +F\left(\int_{T'}^t
(\Phi(r)-\Phi(T'))\otimes dx_r\right)+
\Phi_t-\Phi_{T'}.
\end{align}
Since $T'$ depends only on $C_D$ and $\|F\|$,
by iterating the above procedure, we can get a solution defined on 
$[0,T]$.
\end{proof}

By the above lemma,
we see that there exist a solution  $(y^{\Delta}, \Phi^{\Delta})$
to the implicit Skorohod equation (\ref{implicit skorohod equation}).
Using this approximation solution, we can prove the existence of 
a solution to reflected rough differential equations.
Now we state our main theorem in this section.

\begin{thm}\label{main theorem for p-rough path}
 Assume {\rm (H1)} and $\sigma\in C^3_b$.
Let $\omega$ be the control function of $X_{s,t}$, {\it i.e.},
it holds that
\begin{align*}
|X^i_{s,t}|&\le \omega(s,t)^{i/p}\qquad 0\le s\le t\le T, \quad i=1,2.
\end{align*}
Then there exists a solution $(Y,\Phi)$ to the reflected rough differential equation
$(\ref{rrde})$
such that
for all $0\le s\le t\le T$,
\begin{align}
 |Y^i_{s,t}|&\le C(1+\omega(0,T))^3\omega(s,t)^{i/p}, \qquad i=1,2,
 \label{estimate on Yi}\\
\|\Phi\|_{[s,t]}&\le C(1+\omega(0,T))^3\omega(s,t)^{1/p} ,
\label{estimate on Phi2}
\end{align}
where the positive constant $C$ depends only on
$\sigma, C_D, p$.
\end{thm}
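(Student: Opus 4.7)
The plan is to construct the solution as a uniform limit of Davie-type Euler approximations $(y^\Delta,\Phi^\Delta)$ produced by the implicit Skorohod equation $(\ref{implicit skorohod equation})$, in direct analogy with Section~3 but carrying the extra second-level rough-path terms. I first introduce the local remainder
\begin{equation*}
I^\Delta_s(t)=y^\Delta_t-y^\Delta_s-\sigma(y^\Delta_s)(x_t-x_s)-(D\sigma)(y^\Delta_s)\bigl(\sigma(y^\Delta_s)X^2_{s,t}\bigr)-(D\sigma)(y^\Delta_s)\!\left(\int_s^t(\Phi^\Delta(r)-\Phi^\Delta(s))\otimes dx_r\right)-(\Phi^\Delta(t)-\Phi^\Delta(s)),
\end{equation*}
which vanishes whenever $s$ is a partition point and $t\in[s,t_{k+1}]$, and derive a semi-additive identity for $I^\Delta_s(u)-I^\Delta_s(t)-I^\Delta_t(u)$. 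This identity collects (i) a $C^2_b$-Taylor remainder of $\sigma$ evaluated along the step of $y^\Delta$, (ii) residuals coming from Chen's relation applied to $X^2$, and (iii) a coupling term between the increments of $\Phi^\Delta$ and of $X^1$.

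Next, imitating Lemma~\ref{estimate on ydelta lemma 1}, I would prove by induction on the number $K$ of partition points contained in $[t_k,t]$ that, for $\ep$ small enough and $\omega(t_k,t)\le\ep$,
\begin{equation*}
|I^\Delta_{t_k}(t)|\le M\omega(t_k,t)^{\gamma/p}\qquad\text{and}\qquad\|\Phi^\Delta\|_{[t_k,t]}\le C_1\omega(t_k,t)^{1/p}
\end{equation*}
for some $\gamma\in(p,3)$. The inductive step selects the largest partition point $t_l$ with $\omega(t_k,t_l)\le\omega(t_k,t)/2$, distinguishes the two cases $t_l<\pi^\Delta(t)$ and $t_l=\pi^\Delta(t)$, and absorbs the new second-level contributions using the Young-type estimate $\bigl|\int_s^t(\Phi^\Delta(r)-\Phi^\Delta(s))\otimes dx_r\bigr|\le C\|\Phi^\Delta\|_{[s,t]}\omega(s,t)^{1/p}$ together with the linear local-time bound $\|L(w)\|_{[s,t]}\le C_D\|w\|_{\infty,[s,t]}$ provided by {\rm (H1)}. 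The superadditive factor $2^{1-\gamma/p}<1$ then closes the induction once $\ep$ is chosen small in terms of $\|\sigma\|_{C^3_b}$, $C_D$ and $p$.

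These short-time bounds are promoted to the whole interval $[0,T]$ exactly as in Lemma~\ref{estimate on ydelta}: one selects a subsequence $\{s_i\}_{i=0}^{N'}$ of partition points with $\omega(s_i,s_{i+1})\le\ep$ and $N'\le 1+2\omega(0,T)/\ep$, and telescopes. Summation of first-level differences produces a linear factor $(1+\omega(0,T))$ in the bounds for $Y^1$ and $\Phi$, while the second level $Y^2_{s,t}$, defined through the Davie expansion, contains via Chen's relation the double sum of products $Y^1_{s_k,s_{k+1}}\otimes Y^1_{s_l,s_{l+1}}$ together with the cross integrals $\int X^1\otimes d\Phi$ and $\int\Phi\otimes dX^1$; collecting all contributions yields a bound that is polynomial of degree at most three in $(1+\omega(0,T))$, which is exactly $(\ref{estimate on Yi})$--$(\ref{estimate on Phi2})$. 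A standard Arzel\`a--Ascoli extraction along a sequence $\Delta_n$ with $|\Delta_n|\to 0$ then yields uniform and $p'$-variation convergence ($p'>p$) of $(y^{\Delta_n},\Phi^{\Delta_n})$ to a limit $(Y,\Phi)$; continuity of the Young integrals involved in $\hat{X}$ together with the closedness of the Skorohod map under {\rm (H1)} lets one pass to the limit in $(\ref{implicit skorohod equation})$ and identify $(Y,\Phi)$ as a solution to $(\ref{rrde})$ in the Davie sense, with $Y^2$ defined consistently by the same expansion.

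The main obstacle is the algebraic bookkeeping in the induction. Unlike the $1\le p<2$ regime treated in Theorem~\ref{main theorem for p-variation path}, the remainder now carries the implicit cross integral $\int(\Phi^\Delta-\Phi^\Delta(s))\otimes dx_r$, whose behaviour under the midpoint split couples $\Phi^\Delta$ back to $X^2$ through Chen's identity. Ensuring that every new term gains a factor $\omega^{\gamma/p}$ rather than merely $\omega^{2/p}$ requires a careful interplay between the linear local-time estimate from {\rm (H1)}, the $C^3_b$-regularity of $\sigma$, and the choice $\gamma/p\in(1,3/p)$; this is the step where the argument is genuinely more delicate than its Young-integral counterpart.
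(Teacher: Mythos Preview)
Your overall scheme---Davie-type Euler approximation via the implicit Skorohod equation, an induction on the number of partition points to get $|J^\Delta_{t_k}(t)|\le M\omega(t_k,t)^{\gamma/p}$ and $\|\Phi^\Delta\|_{[s,t]}\le C\omega(s,t)^{1/p}$, promotion to the whole interval by a telescoping over $\{s_i\}$, and an Arzel\`a--Ascoli extraction---is exactly the route the paper takes (Lemmas~\ref{estimate on ydelta lemma 2} and~\ref{estimate on ydelta2}). The algebraic bookkeeping you anticipate is real and handled there.

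There is, however, a genuine gap at the identification step. ``Closedness of the Skorohod map'' and ``solution in the Davie sense'' do not by themselves verify Definition~\ref{definition of rrde}. What must be shown is that the limit pair $(Y_t,\Phi(t))$ solves the Skorohod problem for the \emph{specific} path $w_t=y_0+\int_0^t\sigma(Y_s)\,dX^1_s$, where the integral is the first level of the rough integral against the limit $\hat X$. Passing to the limit in $(\ref{implicit skorohod equation})$ gives only that $(y,\Phi)$ solves the Skorohod problem for $z=y-\Phi$; it does not identify $z$ with $w$. The paper closes this gap by an auxiliary construction: for each $n$ it lets $Y^{\Delta_n}$ be the (unreflected) RDE solution driven by $\widehat{X^{\Delta_n}}$, applies Lyons' continuity theorem to get $Y^{\Delta_n}\to y$ and $\int_0^\cdot\sigma(Y^{\Delta_n}_s)\,dX^1_s\to\int_0^\cdot\sigma(Y_s)\,dX^1_s$ uniformly, and then writes $z^{\Delta_n}_t=(y^{\Delta_n}_t-Y^{\Delta_n}_t)+y_0+\int_0^t\sigma(Y^{\Delta_n}_s)\,dX^1_s$ to conclude $z^{\Delta_n}\to w$. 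Only then does continuity of the Skorohod map yield part~(1) of Definition~\ref{definition of rrde}. Your sketch omits this step entirely.

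A related misattribution: you locate the need for $\sigma\in C^3_b$ in the induction. In fact the induction (Lemma~\ref{estimate on ydelta lemma 2}) and the global bound (Lemma~\ref{estimate on ydelta2}) go through with $\sigma\in C^2_b$; the paper states this explicitly. The extra derivative is consumed precisely by the appeal to Lyons' continuity theorem in the identification step above. If you can carry out the identification without that theorem---for instance by combining the Davie-sense remainder estimate with a direct closedness argument for the Skorohod constraint under uniform limits---you would sharpen the hypothesis to $C^2_b$, but that argument needs to be written out, not asserted.
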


In the proof of this theorem, we use Lyons' continuity theorem.
That is why we assume $\sigma\in C^3_b$.
However, it may not be necessary.
Actually $\sigma\in C^2_b$ is sufficient for the proof of
Lemma~\ref{estimate on ydelta lemma 2} and 
Lemma~\ref{estimate on ydelta2}.
Here we make remarks on this theorem together with
Theorem~\ref{main theorem for p-variation path}.

\begin{rem}\label{remark on main theorems}
{\rm
(1)
At the moment, I do not prove the uniqueness yet and it is not clear
to see whether the functional $X\mapsto \Phi$,
$X\mapsto Y$ is continuous or not.
Actually, at the moment, I do not know the existence of Borel measurable
selection of the mapping.
We consider this measurable selection problem 
for geometric rough path at the end of this section.
If there are no boundary terms, the functional
$X\mapsto Y$ is continuous and this is known as Lyons' continuity
theorem and universal limit theorem.
If the continuity theorem would hold, then by applying it to the case
of Brownian rough path,
it would imply the
strong convergence of Wong-Zakai approximation which
was proved in \cite{aida-sasaki} under general conditions 
on the boundary.
We discuss the relation between the solution to reflected
rough differential equation driven by Brownian rough path and
the solution to reflected SDE driven by Brownian motion
later.

\noindent
(2) We consider the case where $D$ is a half space.
In this simplest case too, we have difficulties 
to prove the uniqueness of solutions and
 continuity theorems with respect to driving paths (rough paths)
in the equations (\ref{rrde for p-variation path}) and (\ref{rrde}).
We explain the reason.
When $D$ is a half space, the Skorohod mapping $\Gamma$ is given explicitly
and it is globally Lipschitz continuous in the set of
continuous path spaces with the sup-norm.
This nice result is used in the studies \cite{anderson-orey,
 doss}.
However, it is not Lipschitz continuous in the $\lambda$-H\"older continuous path
spaces $C^{\lambda}$.
This is pointed out by
Ferrante and Rovira~\cite{ferrante-rovira}
who studied reflected differential equations driven by
H\"older continuous paths on half spaces.
This implies the difficulty of the study of the uniqueness of
solutions to reflected differential equations as pointed out in their
paper.
We may need to restrict the set of
solutions to reflected rough differential equations
to obtain the uniqueness.
In the usual rough differential equations, we have locally
Lipschitz continuities of the solutions 
with respect to the driving rough paths.
On the other hand, it is not difficult to show that 
$\Gamma$ is H\"older continuous mapping in $C^{\lambda}$.
Hence it may be possible to prove such a weaker
continuity of the solution mapping for reflected rough 
differential equation.
}
\end{rem}

To prove this theorem, we argue similarly to the
case $1\le p<2$.
When $\Phi^{\Delta}(t)$ is defined,
let
\begin{align*}
J^{\Delta}_{s}(t)
&=
I^{\Delta}_{s}(t)-
D\sigma(y^{\Delta}_{s})(\sigma(y^{\Delta}_{s}))(X^2_{s,t})\nonumber\\
&\quad -
(D\sigma)(y^{\Delta}_{s})\left(\int_{s}^t
\left(\Phi^{\Delta}(r)-\Phi^{\Delta}(s)\right)
\otimes dx_r\right)\quad s\le t.
\end{align*}
The definition of $I^{\Delta}_s(t)$ is similar to
(\ref{Ideltast}) just replacing $\Phi^{\Delta}$ by a solution to
(\ref{implicit skorohod equation}).
By the definition of $y^{\Delta}$, we have
$J_{t_k}^{\Delta}(t)=0$ for $t_k\le t\le t_{k+1}$.
We define
$J^{\Delta}(s,t,u)=J^{\Delta}_s(u)
-J^{\Delta}_s(t)-J^{\Delta}_t(u)$.
By an easy calculation, 
we have for $s\le t\le u$,
\begin{align*}
J^{\Delta}(s,t,u)
&=
\Bigl(\sigma(y^{\Delta}_t)-
\sigma(y^{\Delta}_s)-(D\sigma)(y^{\Delta}_s)(y^{\Delta}_t-y^{\Delta}_s)
+(D\sigma)(y^{\Delta}_s)(I^{\Delta}_s(t))\Bigr)(x_t-x_u)\nonumber\\
&\quad +\left((D\sigma)(y^{\Delta}_t)(\sigma(y^{\Delta}_t))-
(D\sigma)(y^{\Delta}_s)(\sigma(y^{\Delta}_s))\right)(X^2_{t,u})\nonumber\\
& \quad+\left((D\sigma)(y^{\Delta}_t)-
(D\sigma)(y^{\Delta}_s)\right)\left(\int_t^u
\left(\Phi^{\Delta}(r)-\Phi^{\Delta}(t)\right)\otimes dx_r\right).
\end{align*}
This relation plays important role as in \cite{davie} and the proof in
Lemma~\ref{estimate on ydelta lemma 1}
in the calculation below.

\begin{lem}\label{estimate on ydelta lemma 2}
Suppose {\rm (H1)} hold.
Let $2\le p<\gamma\le 3$.
There exist positive constants $M$ and $\ep$ which depend only on
$\sigma$ and $C_D$ such that
if $\omega(t_k,t)\le \ep(\le 1)$ and $t_k\le s\le t$, then
\begin{align}
|J_{t_k}^{\Delta}(t)|&\le M\omega(t_k,t)^{\gamma/p}\label{Jtkt}\\
\|\Phi^{\Delta}\|_{[s,t]}&\le C_3\omega(s,t)^{1/p},\label{Phitkt 2}
\end{align}
where $C_3=2C_D\|\sigma\|_{\infty}$.
The constant $M$ is specified in $(\ref{M})$.
\end{lem}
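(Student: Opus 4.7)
The plan is to adapt the Davie-style induction of Lemma~\ref{estimate on ydelta lemma 1} to the rough-path setting, exploiting the almost-multiplicativity identity for $J^{\Delta}(s,t,u)$ displayed just above the lemma statement. I prove by induction on $K\ge 1$ that both (\ref{Jtkt}) and (\ref{Phitkt 2}) hold simultaneously for all $t_k\le s\le t\le t_{k+K}$ with $\omega(t_k,t)\le\ep$; the two estimates must be handled jointly because the formula for $J^{\Delta}(s,t,u)$ involves $I^{\Delta}_s(t)$, which differs from $J^{\Delta}_s(t)$ by quantities depending on $\|\Phi^{\Delta}\|_{[s,t]}$. The base case $K=1$ is trivial for (\ref{Jtkt}): $J^{\Delta}_{t_k}(t)\equiv 0$ on $[t_k,t_{k+1}]$ by the very definition of $y^{\Delta}$ via (\ref{implicit skorohod equation}). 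For (\ref{Phitkt 2}) with $K=1$, apply (H1) to the explicit Skorohod driving path on the subinterval, whose sup-norm is bounded by $\|\sigma\|_{\infty}\omega^{1/p}+\|D\sigma\|_{\infty}\|\sigma\|_{\infty}\omega^{2/p}$ plus a self-referential $\Phi^{\Delta}$-integral absorbed for small $\ep$ as in the existence lemma that produced $y^{\Delta}$.

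For the inductive step, assume the estimates up to $K-1$ and pick the largest $t_l$ with $\omega(t_k,t_l)\le\tfrac12\omega(t_k,t)$, so that by superadditivity $\omega(t_{l+1},t)\le\tfrac12\omega(t_k,t)$ in case (a) $t_l<\pi^{\Delta}(t)$, and $\omega(t_l,t)\le\tfrac12\omega(t_k,t)$ in case (b). Applying the $J^{\Delta}(\cdot,\cdot,\cdot)$ identity at the junctions and using $J^{\Delta}_{t_l}(t_{l+1})=0$ in case (a) yields
\begin{align*}
|J^{\Delta}_{t_k}(t)|\le |J^{\Delta}_{t_k}(t_l)|+|J^{\Delta}_{t_{l+1}}(t)|+|J^{\Delta}(t_k,t_l,t_{l+1})|+|J^{\Delta}(t_k,t_{l+1},t)|.
\end{align*}
The first two terms are controlled by the induction hypothesis, contributing $M(\omega(t_k,t_l)^{\gamma/p}+\omega(t_{l+1},t)^{\gamma/p})\le 2^{1-\gamma/p}M\omega(t_k,t)^{\gamma/p}$. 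The three cross-term types arising from the $J^{\Delta}(s,t,u)$ identity are estimated as follows using $\sigma\in C^2_b$: (i) the Taylor remainder $\sigma(y^{\Delta}_t)-\sigma(y^{\Delta}_s)-D\sigma(y^{\Delta}_s)(y^{\Delta}_t-y^{\Delta}_s)$ is $O(\|D^2\sigma\|_{\infty}\omega(s,t)^{2/p})$, and its companion piece $D\sigma(y^{\Delta}_s)I^{\Delta}_s(t)$ is controlled via the inequality $|I^{\Delta}_s(t)|\le|J^{\Delta}_s(t)|+\|D\sigma\|_{\infty}\|\sigma\|_{\infty}\omega(s,t)^{2/p}+\|D\sigma\|_{\infty}\|\Phi^{\Delta}\|_{[s,t]}\|x\|_{\infty,[s,t]}$ read off from the definition of $J^{\Delta}$; multiplication by $|x_u-x_t|\le\omega(t,u)^{1/p}$ produces a contribution of order $\omega^{3/p}$; (ii) the $X^2_{t,u}$ cross term is $\|D(D\sigma\,\sigma)\|_{\infty}|y^{\Delta}_t-y^{\Delta}_s|\,\omega(t,u)^{2/p}=O(\omega^{3/p})$; (iii) the $\Phi^{\Delta}$-integral cross term is bounded by $\|D^2\sigma\|_{\infty}\omega(s,t)^{1/p}\cdot\|\Phi^{\Delta}\|_{[t,u]}\|x\|_{\infty,[t,u]}=O(\omega^{3/p})$ after an integration by parts and the induction hypothesis on $\Phi^{\Delta}$. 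Since $3/p=\gamma/p+(3-\gamma)/p$, every cross term carries a factor $\ep^{(3-\gamma)/p}$, and closing the induction reduces to the algebraic requirement $M\ge C_{\mathrm{cross}}/(1-2^{1-\gamma/p})$ with $\ep$ small, which is the form of $M$ referenced in the lemma statement. Case (b) produces only one cross term and is strictly easier.

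To propagate (\ref{Phitkt 2}), write $\Phi^{\Delta}(u)-\Phi^{\Delta}(s)$ as the local-time increment $L(z^{\Delta})(u)-L(z^{\Delta})(s)$ of the Skorohod driving path
\begin{align*}
z^{\Delta}(u)=y^{\Delta}_s+\sigma(y^{\Delta}_s)(x_u-x_s)+D\sigma(y^{\Delta}_s)\sigma(y^{\Delta}_s)X^2_{s,u}+D\sigma(y^{\Delta}_s)\int_s^u(\Phi^{\Delta}(r)-\Phi^{\Delta}(s))\otimes dx_r+J^{\Delta}_s(u),
\end{align*}
distinguishing the sub-cases $\pi^{\Delta}(s)<\pi^{\Delta}(t)$ and $\pi^{\Delta}(s)=\pi^{\Delta}(t)$ exactly as in Lemma~\ref{estimate on ydelta lemma 1}, with additional boundary corrections when $s$ or $t$ lies in the interior of a partition interval. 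Hypothesis (H1) gives $\|\Phi^{\Delta}\|_{[s,t]}\le C_D\|z^{\Delta}\|_{\infty,[s,t]}$; the five terms in $z^{\Delta}$ contribute respectively $\|\sigma\|_{\infty}\omega^{1/p}$, $\|D\sigma\|_{\infty}\|\sigma\|_{\infty}\omega^{2/p}$, the Young-type bound on the $\Phi^{\Delta}$-integral analogous to (\ref{estimate on integral}), and the freshly established $M\omega^{\gamma/p}$ from the first half of the induction. Absorbing the self-referential $\|\Phi^{\Delta}\|$ term for small $\ep$ yields (\ref{Phitkt 2}) with $C_3=2C_D\|\sigma\|_{\infty}$, closing the induction on $K$.

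The main obstacle is the three-way coupling between $I^{\Delta}$, $J^{\Delta}$, and $\|\Phi^{\Delta}\|$: the cross-term identity for $J^{\Delta}$ contains $I^{\Delta}$, which is related to $J^{\Delta}$ only modulo a term involving $\|\Phi^{\Delta}\|\cdot\|x\|_{\infty}$, and the (H1) bound on $\|\Phi^{\Delta}\|$ uses a driving path that itself contains a $\Phi^{\Delta}$-integral. All three self-references must be tamed by insisting on small $\ep$, which forces (\ref{Jtkt}) and (\ref{Phitkt 2}) to be proved jointly within a single induction rather than derived one after the other; ensuring that the induction constants in $M$ and $C_3$ do not feed back on each other in a circular way is the main bookkeeping difficulty.
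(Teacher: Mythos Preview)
Your proposal is correct and follows essentially the same Davie-style joint induction on $K$ as the paper, including the same treatment of the three cross terms via the $J^{\Delta}(s,t,u)$ identity and the same (H1)-based absorption argument for $\Phi^{\Delta}$. One small slip: your claim that $\omega(t_l,t)\le\tfrac12\omega(t_k,t)$ in case (b) is false in general---superadditivity gives no such bound---but this is harmless because case (b) actually only uses $\omega(t_k,t_l)\le\tfrac12\omega(t_k,t)$ together with $J^{\Delta}_{t_l}(t)=0$, so a single induction term plus one cross term suffices. A cosmetic difference: your case-(a) telescoping splits at $t_{l+1}$ first, producing the cross terms $J^{\Delta}(t_k,t_l,t_{l+1})$ and $J^{\Delta}(t_k,t_{l+1},t)$, whereas the paper splits at $t_l$ first, producing $J^{\Delta}(t_k,t_l,t)$ and $J^{\Delta}(t_l,t_{l+1},t)$; both orderings are valid and yield the same constant $M$ in~(\ref{M}).
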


\begin{proof}
 If (\ref{Jtkt}) and (\ref{Phitkt 2}) hold,
then
\begin{align*}
 |y^{\Delta}_t-y^{\Delta}_{t_k}|&\le
\left(M\ep^{(\gamma-1)/p}+\|\sigma\|_{\infty}+C_3+
\|D\sigma\|_{\infty} \|\sigma\|_{\infty} \ep^{1/p}+
2C_3\|D\sigma\|_{\infty} \ep^{1/p}\right)\omega(t_{k},t)^{1/p}\nonumber\\
&\le C_4\omega(t_{k},t)^{1/p},
\end{align*}
where
$C_4=1+C_3+\|\sigma\|_{\infty}$
and we have used the relation
\begin{align*}
\int_{t_k}^t\left(\Phi^{\Delta}(r)-\Phi^{\Delta}(t_k)\right)
\otimes dx_r&=
\left(\Phi^{\Delta}(t)-\Phi^{\Delta}(t_k)\right)
\otimes(x_{t}-x_{t_k})-\int_{t_k}^td\Phi^{\Delta}(r)\otimes(x_r-x_{t_{k}}).
\end{align*}
Also
\begin{align*}
 |I_{t_k}^{\Delta}(t)|&\le
M\omega(t_k,t)^{\gamma/p}+
\|D\sigma\|_{\infty}\|\sigma\|_{\infty}\omega(t_k,t)^{2/p}+
2C_3\|D\sigma\|_{\infty}\omega(t_{k},t)^{2/p}\nonumber\\
&\le C_5\omega(t_k,t)^{2/p},
\end{align*}
where
$C_5=1+2C_3\|D\sigma\|_{\infty}+\|D\sigma\|_{\infty}\|\sigma\|_{\infty}$.
Let
\begin{align*}
z^{\Delta}_t&=
 y^{\Delta}_{t_{k}}+
\sigma(y^{\Delta}_{t_{k}})(x_{t}-x_{t_{k}})+
(D\sigma)(y^{\Delta}_{{t_{k}}})(\sigma(y^{\Delta}_{t_{k}})X^2_{t_{k},t})
\nonumber\\
&\quad 
+(D\sigma)(y^{\Delta}_{t_{k}})\left(\int_{t_{k}}^t
(\Phi^{\Delta}(r)-\Phi^{\Delta}(t_{k}))\otimes dx_r\right)
+J^{\Delta}_{t_k}(t)\qquad t\ge t_k.
\end{align*}
Then $\Phi^{\Delta}(t)-\Phi^{\Delta}(t_{k})=L(z^{\Delta})(t)$
for
$t\ge t_k$.
We use this relation to estimate $\Phi^{\Delta}$.
Let $K$ be a positive integer.
Consider a claim which depends on $K$:
The estimates (\ref{Jtkt}) and (\ref{Phitkt 2}) hold
for all $t_k$ and $t$,
where $t_k\le t\le t_{k+K}$ and $0\le k\le N-1$.
We prove this claim by an induction on $K$.
Let $K=1$.
By the definition,
$J_{t_{k}}^{\Delta}(t)=0$ for any $t_k\le t\le t_{k+1}$.
We estimate the bounded variation norm of $\Phi^{\Delta}$.
Let $t_k\le s\le t\le t_{k+1}$.
Noting Chen's identity
\begin{align}
X^2_{t_k,t}-X^2_{t_k,s}
&=X^2_{s,t}+(x_s-x_{t_k})\otimes (x_t-x_s).\label{chen}
\end{align}
and
by (H1),
\begin{align*}
 \|\Phi^{\Delta}\|_{[s,t]}&\le
C_D\left(\|\sigma\|_{\infty}+2\|D\sigma\|_{\infty}\|\sigma\|_{\infty}\ep^{1/p}\right)
\omega(s,t)^{1/p}
+
C_D\ep^{1/p}\|D\sigma\|_{\infty}\|\Phi^{\Delta}\|_{[s,t]}\nonumber\\
&\quad +C_D\|D\sigma\|_{\infty}\|\Phi^{\Delta}\|_{[t_k,t]}\omega(s,t)^{1/p}
\end{align*}
which implies for sufficiently small $\ep$,
\begin{align*}
 \|\Phi^{\Delta}\|_{[s,t]}&\le 2C_D\|\sigma\|_{\infty}\omega(s,t)^{1/p}.
\end{align*}
Suppose the claim holds for all $K$ which is smaller than or equal to
$K'-1$.
We prove the case $K=K'$.
Let $t_l$ be the largest partition point such that
$t_k\le t_l<t\le t_{k+K'}$ and
$\omega(t_k,t_l)\le \frac{1}{2}\omega(t_k,t)$.
There are two cases, 
(a) $t_l<\pi^{\Delta}(t)$ and
(b) $t_l=\pi^{\Delta}(t)$.
We consider the case (a).
In this case, $t_l<t_{l+1}\le \pi^{\Delta}(t)$.
By the definition,
we have
$\omega(t_k,t_{l+1})\ge\frac{1}{2}\omega(t_k,t)$.
By the superadditivity of $\omega$, we have
\begin{align}
 \omega(t_{l+1},t)\le \frac{1}{2}\omega(t_k,t).
\end{align}
We have
\begin{align*}
 |J_{t_k}^{\Delta}(t)|&\le
|J_{t_k}^{\Delta}(t_l)|+|J_{t_l}^{\Delta}(t_{l+1})|+
|J_{t_{l+1}}^{\Delta}(t)|\nonumber\\
& +
|J^{\Delta}(t_k,t_l,t)|+|J^{\Delta}(t_l,t_{l+1},t)|
% \\
% &+
% \Bigl(\sigma(y^{\Delta}_{t_l})-
% \sigma(y^{\Delta}_{t_k})-(Df)(y^{\Delta}_{t_k})(y^{\Delta}_{t_l}-y^{\Delta}_{t_k})
% -(Df)(y^{\Delta}_{t_k})(I^{\Delta}_{t_k}(t_l))\Bigr)(x_{t_l}-x_t)\nonumber\\
% &\quad +\left((Df)(y^{\Delta}_{t_l})(\sigma(y^{\Delta}_{t_l}))-
% (Df)(y^{\Delta}_{t_k})(\sigma(y^{\Delta}_{t_k}))\right)(X^2_{t_l,t})\nonumber\\
% & \quad+\left((Df)(y^{\Delta}_{t_l})-
% (Df)(y^{\Delta}_{t_k})\right)\left(\int_{t_l}^t
% \left(\Phi_r^{\Delta}-\Phi^{\Delta}_{t_l}\right)\otimes
%  dx_r\right)\nonumber\\
% & +
% \Bigl(\sigma(y^{\Delta}_{t_{l+1}})-
% \sigma(y^{\Delta}_{t_l})-(Df)(y^{\Delta}_{t_l})
% (y^{\Delta}_{t_{l+1}}-y^{\Delta}_{t_l})
% -(Df)(y^{\Delta}_{t_l})(I^{\Delta}_{t_l}(t_{l+1}))\Bigr)
% (x_{t_{l+1}}-x_{t})\nonumber\\
% &\quad +\left((Df)(y^{\Delta}_{t_{l+1}})(\sigma(y^{\Delta}_{t_{l+1}}))-
% (Df)(y^{\Delta}_{t_l})(\sigma(y^{\Delta}_{t_l}))\right)(X^2_{t_{l+1},t})\nonumber\\
% & \quad+\left((Df)(y^{\Delta}_{t_{l+1}})-
% (Df)(y^{\Delta}_{t_l})\right)\left(\int_{t_{l+1}}^t
% \left(\Phi_r^{\Delta}-\Phi^{\Delta}_{t_{l+1}}\right)\otimes dx_r\right).
\end{align*}
By the assumption of the induction and the choice of $t_l$,
\begin{align*}
 |J_{t_k}^{\Delta}(t_l)|\le
2^{-\gamma/p}M\omega(t_k,t)^{\gamma/p},\quad
|J_{t_{l+1}}^{\Delta}(t)|\le 2^{-\gamma/p}M\omega(t_k,t)^{\gamma/p}.
\end{align*}
By the assumption of the induction,
we have 
\begin{align*}
|J^{\Delta}(t_k,t_l,t)|&\le
(C_4/2)\|D^2\sigma\|_{\infty}\omega(t_k,t_l)^{2/p}\omega(t_l,t)^{1/p}
+C_5\|D\sigma\|_{\infty}\omega(t_k,t_l)^{2/p}\omega(t_l,t)^{1/p}\nonumber\\
&\quad +
C_4\left(\|D^2\sigma\|_{\infty}\|\sigma\|_{\infty}+\|D\sigma\|_{\infty}^2\right)
\omega(t_k,t_l)^{1/p}\omega(t_l,t)^{2/p}\nonumber\\
&\quad+
2C_4C_3\|D^2\sigma\|_{\infty}\omega(t_k,t_l)^{1/p}
\omega(t_l,t)^{2/p}.
\end{align*}
Here we have used that
if
$t_k<t_l$ we can use the assumption of the induction and so, 
\begin{align*}
\left|\int_{t_l}^t\left(\Phi^{\Delta}(r)-\Phi^{\Delta}(t_l)\right)
\otimes dx_r\right|&=
\left|\left(\Phi^{\Delta}(t)-\Phi^{\Delta}(t_l)\right)
\otimes(x_{t}-x_{t_l})-\int_{t_l}^td\Phi^{\Delta}(r)\otimes(x_r-x_{t_{l}})\right|
\nonumber\\
&=
2\|\Phi^{\Delta}\|_{[t_l,t]}\omega(t_l,t)^{1/p}\nonumber\\
&\le
2C_3\omega(t_l,t)^{2/p}.
\end{align*}
Similarly,
\begin{align*}
|J^{\Delta}(t_l,t_{l+1},t)|&\le
(C_4^2/2)\|D^2\sigma\|_{\infty}\omega(t_l,t_{l+1})^{2/p}
\omega(t_{l+1},t)^{1/p}
+C_5\|D\sigma\|_{\infty}\omega(t_l,t_{l+1})^{2/p}\omega(t_{l+1},t)^{1/p}\nonumber\\
&\quad +
C_4\left(\|D^2\sigma\|_{\infty}\|\sigma\|_{\infty}+\|D\sigma\|_{\infty}^2\right)
\omega(t_l,t_{l+1})^{1/p}
\omega(t_{l+1},t)^{2/p}\nonumber\\
&\quad+
2C_3C_4\|D^2\sigma\|_{\infty}\omega(t_{l},t_{l+1})^{1/p}
\omega(t_{l+1},t)^{2/p}.
\end{align*}
Consequently,
\begin{align*}
|J^{\Delta}_{t_k}(t)|&\le
2^{1-(\gamma/p)}M\omega(t_k,t)^{\gamma/p}
+
\ep^{(3-\gamma)/p}C_6\omega(t_k,t)^{\gamma/p},
\end{align*}
where
\begin{align*}
 C_6&=
C_4^2
\|D^2\sigma\|_{\infty}+2C_5\|D\sigma\|_{\infty}+
2C_4\left(\|D^2\sigma\|_{\infty}\|\sigma\|_{\infty}+\|D\sigma\|_{\infty}^2
\right)+4C_3C_4\|D^2\sigma\|_{\infty}.
\end{align*}
Therefore, 
if $M$ satisfies
\begin{align}
 M\ge \frac{C_6}{1-2^{1-(\gamma/p)}},\label{M}
\end{align}
then the desired estimate for $J_{t_k}^{\Delta}(t)$ holds.
In the case of (b),
by using the assumption of the induction and noting
$J_{t_l}^{\Delta}(t)=0$, we obtain
\begin{align*}
 |J_{t_k}^{\Delta}(t)|&\le
|J_{t_k}^{\Delta}(t_l)|+|J_{t_l}^{\Delta}(t)|+
|J^{\Delta}(t_k,t_l,t)|\nonumber\\
&\le M\omega(t_k,t_l)^{\gamma/p}+
|J^{\Delta}(t_k,t_l,t)|\nonumber\\
&\le 2^{-\gamma/p}M\omega(t_k,t)^{\gamma/p}+
\left(\ep^{(3-\gamma)/p}/2\right)C_6\omega(t_k,t)^{\gamma/p}.
\end{align*}
Hence, under the condition (\ref{M}), the desired estimate for
$J_{t_k}^{\Delta}(t)$ holds.
We show $\|\Phi^{\Delta}\|_{[s,t]}\le C_3\omega(s,t)^{1/p}$
for $t_k\le s<t\le t_{k+K'}$ with $\omega(t_k,t)\le \ep$.
We have
\begin{align*}
 J^{\Delta}_{t_k}(t)-J^{\Delta}_{t_k}(s)&=
J^{\Delta}_s(t)+J^{\Delta}(t_k,s,t).
\end{align*}
Let $t_m$ be the largest number such that $t_m\le s$.
Then we have two cases,
(a) $t_k\le t_m\le s<t_{m+1}<t$
and (b) $t_k\le t_m\le s<t\le t_{m+1}$.
We consider the case (a).
We can apply the assumption of the induction to $t_k, s$ and
we obtain,
\begin{align*}
 |J^{\Delta}(t_k,s,t)|&\le
2^{-1}C_4^2\|D^2\sigma\|_{\infty}C_4^2\omega(t_k,s)^{2/p}\omega(s,t)^{1/p}\nonumber\\
& \quad+
C_5\|D\sigma\|_{\infty}\omega(t_k,s)^{2/p}\omega(s,t)^{1/p}\nonumber\\
& +C_4\left(\|D^2\sigma\|_{\infty}\|\sigma\|_{\infty}+\|D\sigma\|_{\infty}^2\right)
\omega(t_k,s)^{1/p}\omega(s,t)^{2/p}\nonumber\\
&\quad +
2C_4\|D^2\sigma\|_{\infty}\omega(t_k,s)^{1/p}\|\Phi^{\Delta}\|_{[s,t]}\omega(s,t)^{1/p}.
\end{align*}
We have
\begin{align*}
 J_s^{\Delta}(t)&=
J_s^{\Delta}(t_{m+1})+J_{t_{m+1}}^{\Delta}(t)+
J^{\Delta}(s,t_{m+1},t).
\end{align*}
Since $J_s^{\Delta}(t_{m+1})=-J^{\Delta}(t_m,s,t_{m+1})$,
\begin{align*}
 |J_s^{\Delta}(t_{m+1})|&\le
2^{-1}\|D^2\sigma\|_{\infty}\omega(t_m,s)^{2/p}\omega(s,t_{m+1})^{1/p}
+C_5\|D\sigma\|_{\infty}\omega(t_m,s)^{2/p}\omega(s,t_{m+1})^{1/p}\nonumber\\
&\quad +
C_4\left(\|D^2\sigma\|_{\infty}\|\sigma\|_{\infty}+\|D\sigma\|_{\infty}^2\right)
\omega(t_m,s)^{1/p}\omega(s,t_{m+1})^{2/p}\nonumber\\
&\quad+
2C_4\|D^2\sigma\|_{\infty}\omega(t_m,s)^{1/p}
\|\Phi^{\Delta}\|_{[s,t_{m+1}]}\omega(s,t_{m+1})^{1/p}.
\end{align*}
Note that
\begin{align*}
|I_s^{\Delta}(t_{m+1})|&\le
|I_{t_m}^{\Delta}(t_{m+1})|+|I_{t_m}^{\Delta}(s)|+
|\sigma(y^{\Delta}_{s})-\sigma(y^{\Delta}_{t_m})|\cdot |x_{t_{m+1}}-x_s|\nonumber\\
&\le
C_5\omega(t_m,t_{m+1})^{2/p}+C_5\omega(t_m,s)^{2/p}+
\|D\sigma\|_{\infty}C_4\omega(t_m,s)^{1/p}\omega(s,t_{m+1})^{1/p}\nonumber\\
&\le (2C_5+C_4\|D\sigma\|_{\infty})\ep^{1/p}\omega(t_k,t)^{1/p}\\
|y_{s}^{\Delta}-y_{t_{m+1}}^{\Delta}|&\le
2C_4\omega(t_m,t_{m+1})^{1/p}.
\end{align*}
Hence,
\begin{align*}
 |J^{\Delta}(s,t_{m+1},t)|&\le
2C_4^2\|D^2\sigma\|_{\infty}\omega(t_m,t_{m+1})^{2/p}\omega(t_{m+1},t)^{1/p}
\nonumber\\
&\quad
+\|D\sigma\|_{\infty}(2C_5+C_4\|D\sigma\|_{\infty})\ep^{1/p}\omega(t_k,t)^{1/p}
\omega(t_{m+1},t)^{1/p}\nonumber\\
&\quad +
2C_4\left(\|D^2\sigma\|_{\infty}\|\sigma\|_{\infty}+\|D\sigma\|_{\infty}^2\right)
\omega(t_m,t_{m+1})^{1/p}\omega(t_{m+1},t)^{2/p}\nonumber\\
&\quad+
4C_4\|D^2\sigma\|_{\infty}\omega(t_m,t_{m+1})^{1/p}
\|\Phi^{\Delta}\|_{[t_{m+1},t]}\omega(t_{m+1},t)^{1/p}.
\end{align*}
By the assumption of induction,
\begin{align*}
|J_{t_{m+1}}^{\Delta}(t)|&\le
M\omega(t_{m+1},t)^{\gamma/p}.
\end{align*}
Because
\begin{align*}
 \int_{s}^t\left(\Phi^{\Delta}(r)-\Phi^{\Delta}(t_k)\right)\otimes
 dx_r&=
\left(\Phi^{\Delta}(t)-\Phi^{\Delta}(t_k)\right)\otimes(x_t-x_s)
-\int_s^td\Phi^{\Delta}(r)\otimes (x_r-x_s),
\end{align*}
we have
\begin{align*}
 \left|\int_{s}^t\left(\Phi^{\Delta}(r)-\Phi^{\Delta}(t_k)\right)\otimes
 dx_r\right|&\le
\|\Phi^{\Delta}\|_{[t_k,t]}\omega(s,t)^{1/p}
+\|\Phi^{\Delta}\|_{[s,t]}\omega(s,t)^{1/p}\nonumber\\
&\le C_3\ep^{1/p}\omega(s,t)^{1/p}+\ep^{1/p}\|\Phi^{\Delta}\|_{[s,t]}.
\end{align*}
By Chen's identity and
putting the estimates above together,
by (H1), for sufficiently small $\ep$, we have 
\begin{align}
\|\Phi^{\Delta}\|_{[s,t]}
&\le C_D\|z^{\Delta}\|_{\infty,[s,t]}\nonumber\\
&\le
C_D\|\sigma\|_{\infty}\omega(s,t)^{1/p}
+\ep^{1/p}C_D
\left(2\|D\sigma\|_{\infty}\|\sigma\|_{\infty}+C_3\|D\sigma\|_{\infty}\right)
\omega(s,t)^{1/p}\nonumber\\
&\quad + C_D\|D\sigma\|_{\infty}(C_6\ep^{2/p}+2\ep^{1/p})
\|\Phi^{\Delta}\|_{[s,t]}\nonumber\\
&\quad +C_DC_7\ep^{1/p}\omega(s,t)^{1/p},\label{estimate on Phidelta}
\end{align}
where $C_7$ depends only on $p,\sigma, D$.
Therefore, for sufficiently small $\ep$, we obtain
$\|\Phi^{\Delta}\|_{[s,t]}\le C_3\omega(s,t)^{1/p}$.
We consider the case (b).
Since $I_{t_k}^{\Delta}(s)=I_{t_k}^{\Delta}(t_m)+I^{\Delta}_{t_m}(s)
+\left(\sigma(y_{t_m})-\sigma(y_{t_k})\right)(x_s-x_{t_m})$,
by using the assumption of the induction, we have
\begin{align*}
 |I_{t_k}^{\Delta}(s)|&\le
C_5\omega(t_k,t_m)^{2/p}+C_5\omega(t_m,s)^{2/p}+
C_4\|D\sigma\|_{\infty}\omega(t_k,t_m)^{1/p}\omega(t_m,s)^{1/p}\nonumber\\
&\le (2C_5+C_4\|D\sigma\|_{\infty})\ep^{2/p}.
\end{align*}
Since $J_s^{\Delta}(t)=-J^{\Delta}(t_m,s,t)$, we have
\begin{align*}
 |J^{\Delta}_s(t)|&\le
2^{-1}C_4^2\|D^2\sigma\|_{\infty}\omega(t_m,s)^{2/p}\omega(s,t)^{1/p}
+C_5\|D\sigma\|_{\infty}\omega(t_m,s)^{2/p}\omega(s,t)^{1/p}\nonumber\\
&\quad
 +C_4\left(\|D^2\sigma\|_{\infty}\|\sigma\|_{\infty}+\|D\sigma\|_{\infty}^2\right)
\omega(t_m,s)^{1/p}\omega(s,t)^{2/p}\nonumber\\
&\quad +2C_4\|D^2\sigma\|_{\infty}\omega(t_m,s)^{1/p}\omega(s,t)^{1/p}
\|\Phi^{\Delta}\|_{[s,t]}.
\end{align*}
Therefore, by the same argument as the case (a), we complete the proof of
the case (b) and the proof of the lemma is finished.
\end{proof}

Actually, the above proof shows stronger estimates
similarly to the case $1\le p<2$.
For $t_k\le s\le t$ with $\omega(t_k,t)\le\ep$
\begin{align*}
 |J^{\Delta}_{t_k}(t)-J^{\Delta}_{t_k}(s)|\le
C_8\ep^{1/p}\omega(s,t)^{1/p}.
\end{align*}
Thus taking smaller $\ep$ if necessary,
we have
\begin{align*}
 |y^{\Delta}_t-y^{\Delta}_s|&\le
C_4\omega(s,t)^{1/p}
\quad
\mbox{for $t_k\le s\le t$ with $\omega(t_k,t)\le \ep$}.
\end{align*}

For general $s,t$, we have the following estimates.

\begin{lem}\label{estimate on ydelta2}
Let $\ep$ be a positive number specified in the above argument.
 Let $\Delta=\{t_k\}_{k=1}^N$ be a partition of
$[0,T]$ which satisfies $(\ref{assumption on the partition})$.
Then
there exists $C>0$ such that
for any $0\le s\le t\le T$ the following estimates hold.
The constant $C$ depends only on
$\sigma$, $p$ and $D$.
\begin{enumerate}
\item[$(1)$] $|y^{\Delta}_t-y^{\Delta}_s|\le C
\left(1+\omega(0,T)\right)\omega(s,t)^{1/p}$
\item[$(2)$] $\|\Phi^{\Delta}\|_{[s,t]}\le C
\left(1+\omega(0,T)\right)\omega(s,t)^{1/p}$.
\end{enumerate}
\end{lem}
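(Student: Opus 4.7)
My plan is to mimic the proof of Lemma~\ref{estimate on ydelta} almost verbatim, substituting the local estimates of Lemma~\ref{estimate on ydelta lemma 2} and its strengthened consequence
\[
|y^{\Delta}_t-y^{\Delta}_s|\le C_4\omega(s,t)^{1/p}\quad\text{for } t_k\le s\le t\ \text{with}\ \omega(t_k,t)\le\ep,
\]
\[
\|\Phi^{\Delta}\|_{[s,t]}\le C_3\omega(s,t)^{1/p}\quad\text{under the same hypothesis,}
\]
in place of the corresponding $1\le p<2$ estimates. The only new ingredient relative to Section~3 is the presence of the implicit Skorohod term and the iterated integral $X^2$, but these have already been absorbed into the local estimates, so at this stage they do not require further attention.

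Concretely, I would first construct a subsequence $\{s_j\}_{j=0}^{N'}$ of $\{t_k\}$ exactly as in the proof of Lemma~\ref{estimate on ydelta}: set $s_0=0$ and, given $s_j$, define $s_{j+1}$ to be the smallest $t_i>s_j$ with $\omega(s_j,t_i)>\ep/2$, or $s_{j+1}=T$ if no such $t_i$ exists. Assumption~$(\ref{assumption on the partition})$ guarantees $\omega(s_j,s_{j+1})\le \ep$, so Lemma~\ref{estimate on ydelta lemma 2} together with the strengthened $y^{\Delta}$-estimate above applies on each $[s_j,s_{j+1}]$. Superadditivity of $\omega$ and the defining property of $s_{j+1}$ give
\[
\omega(0,T)\ge \sum_{j=0}^{N'-1}\omega(s_j,s_{j+1})\ge (N'-1)\ep/2,
\]
hence $N'\le 1+2\omega(0,T)/\ep$.

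For arbitrary $0\le s<t\le T$, pick $l,m$ with $s_l\le s<s_{l+1}\le s_m\le t<s_{m+1}$ (the cases where $s,t$ lie in a single subinterval are immediate from the local estimate). Then
\[
|y^{\Delta}_t-y^{\Delta}_s|\le |y^{\Delta}_t-y^{\Delta}_{s_m}|+\sum_{j=l+1}^{m-1}|y^{\Delta}_{s_{j+1}}-y^{\Delta}_{s_j}|+|y^{\Delta}_{s_{l+1}}-y^{\Delta}_s|,
\]
and bounding each summand by $C_4\omega(\cdot,\cdot)^{1/p}\le C_4\omega(s,t)^{1/p}$ and summing (there are at most $N'+1\le 2+2\omega(0,T)/\ep$ terms) yields (1) with constant $C$ depending only on $\sigma,p,D$. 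For (2), the same decomposition, now using additivity of the total variation norm and the local bound $\|\Phi^{\Delta}\|_{[s_j,s_{j+1}]}\le C_3\omega(s_j,s_{j+1})^{1/p}$, gives the analogous estimate.

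I do not expect a genuine obstacle at this step: all the nontrivial work (control of the Davie-type remainder $J^{\Delta}$ and of $\Phi^{\Delta}$ on small intervals) has already been done in Lemma~\ref{estimate on ydelta lemma 2}, and the globalization argument is purely combinatorial, identical in form to the one used for $1\le p<2$.
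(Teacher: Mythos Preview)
Your proposal is correct and follows exactly the approach the paper intends: the paper's own proof simply says ``The proof of this lemma is similar to that of Lemma~\ref{estimate on ydelta},'' and you have carried out precisely that globalization argument with the local bounds from Lemma~\ref{estimate on ydelta lemma 2} (and its strengthened consequence) substituted for those of Lemma~\ref{estimate on ydelta lemma 1b}.
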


\begin{proof}
 The proof of this lemma is similar to
that of Lemma~\ref{estimate on ydelta}.
\end{proof}

Now we prove our main theorem.

\begin{proof}[Proof of Theorem~$\ref{main theorem for p-rough path}$]
Let $\widehat{X^{\Delta}}$ be the naturally defined 
$p$-rough path associated with the $p$-rough path
$X$ and $\Phi^{\Delta}$ as in Definition~\ref{definition of rrde} (2).
Thanks to the above lemma,
this family of $p$-rough path has a common control function
$C \omega$ for some positive constant $C$ which is independent of
$\Delta$.
Let $p'>p$.
Since the two-parameter function
$(s,t)\mapsto \widehat{X^{\Delta}}_{s,t}$ and
$y^{\Delta}_t$
are equicontinuos
(we need Chen's identity to prove the equicontinuity of the former),
there exist subseqeunces
$\widehat{X^{\Delta_n}}, y^{\Delta_n}$,
a $p$-rough path $\hat{X}\in\Omega_p(\RR^n\oplus \RR^d)$,
a continuous path $y$ and
a positive decreasing sequence $\delta_n\downarrow 0$ such that
\begin{align*}
 \left|
\widehat{X^{\Delta_n}}_{s,t}-\hat{X}_{s,t}
\right|&\le \delta_n \omega(s,t)^{1/p'} \quad 0\le s\le t\le T,\\
\lim_{n\to\infty}\max_{0\le t\le T}|y^{\Delta_n}_t-y_t|&=0,
\end{align*}
where 
$\Delta_{n+1}$ is a subdivision of $\Delta_n$ and
$|\Delta_n|\to 0$.
%Note that $y^{\Delta_n}(t)$ and $y(t)$ are 1-st level paths
%of $\widehat{X^{\Delta_n}}_{s,t}$ and $\hat{X}_{s,t}$
%respectively.
We denote the limit of $\Phi^{\Delta_n}(t)$ by $\Phi(t)$.
Clearly, the estimate (\ref{estimate on Phi2}) holds for this $\Phi$.
The limit $\hat{X}$ is naturally defined rough path by
$X$ and $\Phi$ as in Definition~\ref{definition of rrde} (2).
Also we have for all $0\le s\le t\le T$,
\begin{align}
& \Biggl|y_t-y_s-
\sigma(y_s)(x_{t}-x_{s})-(\Phi(t)-\Phi(s))\nonumber\\
&\qquad -
(D\sigma)(y_{s})(\sigma(y_{s})X^2_{s,t})
-(D\sigma)(y_{s})\left(\int_{s}^t
(\Phi(r)-\Phi(s))\otimes dx_r\right)\Biggr|
\le
C\omega(s,t)^{\gamma/p}.\label{solution in the sense of Davie}
\end{align}
(\ref{Jtkt}) shows (\ref{solution in the sense of Davie})
for $s=t_k\in \cup_n \Delta_n$ with
$\omega(t_k,t)\le \ep$.
By the denseness of $\cup_n\Delta_n$ and the continuity
of the functions on the both sides in (\ref{solution in the sense of
 Davie}), we see 
that this estimate holds for $s,t$ with $\omega(s,t)\le \ep$.
When $\omega(s,t)\ge \ep$, the estimate clearly holds.
This shows $y_t$ is a solution to
\begin{align*}
 dy_t=\hat{\sigma}(y_t)d\hat{X}_t
\end{align*}
in the sense of Davie~\cite{davie}.
Also we can find a $p$-rough path $Y_{s,t}\in \Omega_p(\RR^d)$
so that $y_t=y_0+Y^1_{0,t}$
and the equation (\ref{rrde}) is satisfied.
We refer the reader for the construction of $Y_{s,t}$ to \cite{davie}.
We write $Y_t=y_0+Y^1_{0,t}$.
Since $\hat{X}_{s,t}$ has the control function
$C(1+\omega(0,T))\omega(s,t)$, the estimate on the rough differential
equations implies the estimate (\ref{estimate on Yi}).
We have to show $Y_t$ and $\Phi(t)$ is the solution to the
Skorohod problem associated with
the first level path 
$
y_0+\int_0^t\sigma(Y_s)dX^1_s.
$
To this end, we consider the solution 
$Y^{\Delta_n}_{s,t}$ associated with 
$\widehat{X^{\Delta_n}}_{s,t}$.
Let $Y^{\Delta_n}_t=y_0+(Y^{\Delta_n})^1_{0,t}$.
Since $\widehat{X^{\Delta_n}}$ converges to $\hat{X}$ in
$\Omega_{p'}(\RR^n\oplus \RR^d)$,
by Lyons' continuity theorem of solutions to rough differential
equations, we have
$
\lim_{n\to\infty}\|Y^{\Delta_n}-y\|_{\infty,[0,T]}=0.
$
Hence
\begin{align*}
\lim_{n\to\infty}\|Y^{\Delta_n}-y^{\Delta_n}\|_{\infty,[0,T]}=0.
\end{align*}
Also by Lyons' continuity theorem for the integrals of $p'$-rough path,
\begin{align*}
 \lim_{n\to\infty}\left\|
\int_0^{\cdot}\sigma(Y^{\Delta_n}_s)dX^1_s
-\int_0^{\cdot}\sigma(Y_s)dX^1_s\right\|_{\infty,[0,T]}=0.
\end{align*}
Let $z^{\Delta_n}_t=y^{\Delta_n}_t-\Phi^{\Delta_n}_t$.
Then $(y^{\Delta_n}_t,\Phi^{\Delta_n}_t)$ is the solution
to the Skorohod problem associated with $z^{\Delta_n}_t$.
Since
$z^{\Delta_n}_t=y_t^{\Delta_n}-Y^{\Delta_n}_t+y_0+
\int_0^t\sigma(Y^{\Delta_n}_s)dX^1_s$,
$z^{\Delta_n}_t$
converges to $y_0+\int_0^t\sigma(Y_s)dX^1_s$ uniformly.
By the continuity of the Skorohod map (see \cite{saisho}),
this shows the desired result.
\end{proof} 

Let $G\Omega_p(\RR^n)$ be the set of geometric rough paths
with $2\le p<3$.
That is, this set is the closure of the set of smooth rough paths
in $p$-variation norm.
For solutions to reflected rough differential equations
driven by geometric rough path, we can prove the existence of
measurable selection of the solution mapping.

\begin{thm}
Assume $D$ satisfies {\rm (H1)} and $\sigma\in C^3_b$.
There exists a universally measurable map
$I : G\Omega_p(\RR^n)\to G\Omega_p(\RR^d)\times V_p(\RR^d)$
such that the following hold.
Here $V_p(\RR^d)$ denotes the set of continuous paths
of finite
$p$-variation in $\RR^d$ defined on $[0,T]$.

\noindent
$(1)$ 
For any $X\in G\Omega_p(\RR^n)$, 
$I(X)$ is a solution
to $(\ref{rrde})$ and satisfies the estimates
$(\ref{estimate on Yi})$ and $(\ref{estimate on Phi2})$.

\noindent
$(2)$
There exists a sequence of smooth rough paths
$\{X_N\}\subset G\Omega_p(\RR^n)$ such that
$\lim_{N\to\infty}X_N=X$ and
$
\lim_{N\to\infty}I(X_N)=I(X),
$
where the convergences take place in the topology
$G\Omega_{p}(\RR^n)$ and the
product
topology of $G\Omega_{p'}(\RR^d)\times V_{p}(\RR^d)$
for all $p<p'<3$
respectively.
\end{thm}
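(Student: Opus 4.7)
The plan is to construct $I$ via a measurable selection argument built directly on the approximation scheme of the proof of Theorem~\ref{main theorem for p-rough path}. Fix once and for all a countable dense subset $\mathcal{D}=\{s_k\}_{k=1}^{\infty}$ of smooth rough paths in $G\Omega_p(\RR^n)$ endowed with the $p$-variation topology, and for each $s_k$ choose arbitrarily a solution $J(s_k)=(Y^{(k)},\Phi^{(k)})\in G\Omega_p(\RR^d)\times V_p(\RR^d)$ satisfying the estimates (\ref{estimate on Yi}) and (\ref{estimate on Phi2}); this is possible since $s_k$ is itself smooth, so Theorem~\ref{main theorem for p-rough path} applies.

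For an arbitrary $X\in G\Omega_p(\RR^n)$, define the Borel map $k_N:G\Omega_p(\RR^n)\to\NN$ by setting $k_N(X)$ to be the smallest index with $p$-variation distance from $X$ to $s_{k_N(X)}$ less than $1/N$, and let $X_N:=s_{k_N(X)}$. Then $X_N\to X$ in $p$-variation, and the control functions of $\widehat{X_N}$ (in the sense of Definition~\ref{definition of rrde}) may be controlled by a single control function depending only on $X$. By Theorem~\ref{main theorem for p-rough path} and the equicontinuity arguments in its proof (combined with Lyons' continuity theorem applied to $\widehat{X_N}$), the family $\{J(X_N)\}_N$ is precompact in $G\Omega_{p'}(\RR^d)\times V_p(\RR^d)$ for every $p<p'<3$, and every limit point is a solution to (\ref{rrde}) driven by $X$ satisfying (\ref{estimate on Yi}) and (\ref{estimate on Phi2}).

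Define the limit-point multifunction
\[
\mathcal{L}(X):=\bigcap_{N=1}^{\infty}\overline{\{J(X_M):M\ge N\}}^{\,G\Omega_{p'}(\RR^d)\times V_p(\RR^d)}.
\]
By the precompactness above, $\mathcal{L}(X)$ is a non-empty compact subset of a Polish space, and its graph in $G\Omega_p(\RR^n)\times G\Omega_{p'}(\RR^d)\times V_p(\RR^d)$ is analytic because it is a countable intersection of Borel sets built from the Borel maps $X\mapsto X_N$ and continuity of closure and intersection in Polish spaces. Applying the Jankov--von Neumann selection theorem, we obtain a universally measurable selection $I(X)\in\mathcal{L}(X)$. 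When $X=s_k\in\mathcal{D}$, we have $X_N=s_k$ for all large $N$, hence $\mathcal{L}(s_k)=\{J(s_k)\}$ and $I(s_k)=J(s_k)$. Property $(1)$ follows because the estimates (\ref{estimate on Yi}) and (\ref{estimate on Phi2}) pass to limits. Property $(2)$ follows from the definition of $\mathcal{L}$: there is a subsequence $X_{N_j}\to X$ with $I(X_{N_j})=J(X_{N_j})\to I(X)$ in $G\Omega_{p'}(\RR^d)\times V_p(\RR^d)$, and the $X_{N_j}$ are smooth rough paths by construction.

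The main obstacle is verifying the analyticity (equivalently, Borel measurability) of the graph of $\mathcal{L}$ in the precise product topology stated in the theorem, since this requires showing that the $\limsup$ of a Borel-measurable sequence of singletons remains within the class to which the selection theorem applies; standard upper-semicontinuity arguments together with Polishness of the target space resolve this, but some care is needed because the target topology ($p'$-variation) is strictly weaker than $p$-variation. A secondary, purely bookkeeping issue is to ensure that the precompactness input, which in the proof of Theorem~\ref{main theorem for p-rough path} was extracted by a diagonal subsequence, can be upgraded to sequential compactness of $\{J(X_N)\}_N$ as a whole; this is immediate since each convergent subsequence produces a solution to (\ref{rrde}) for $X$ with uniform estimates, so the closure operation used in the definition of $\mathcal{L}(X)$ already captures all limit points.
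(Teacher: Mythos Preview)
Your approach is correct and reaches the same conclusion as the paper, but the route differs in two respects worth noting.

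First, the paper does not work directly in $G\Omega_{p'}(\RR^d)\times V_p(\RR^d)$. Instead it lifts the triple $(X_N,\Phi_N,Y_N)$ to a single smooth rough path $\widetilde{X_N}\in G\Omega_p(\RR^n\oplus\RR^d\oplus\RR^d)$ and defines $\Theta$ to be the set of \emph{all} limit points $(\widetilde{X},X)$ obtained from \emph{any} approximating sequence of smooth rough paths and \emph{any} choice of solution along that sequence. The point of the lift is that $\Theta$ is then immediately closed (a diagonal argument), so the measurable selection theorem applies with essentially no further work; the map $I$ is obtained by projecting the selected $\mathcal I(X)$. This bypasses the bookkeeping you flag as your ``main obstacle'' (Borel/analytic structure of the graph of $\mathcal L$), and it also absorbs the cross--iterated integrals between $X$, $\Phi$ and $Y$ that are needed to check the limit is a solution in the sense of Definition~\ref{definition of rrde}.

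Second, your construction is more explicit: you fix a countable dense set $\mathcal D$ of smooth rough paths, pre-select solutions $J(s_k)$ once and for all, and build a Borel nearest-neighbour scheme $X\mapsto X_N$. This costs you the verification that the graph of $\mathcal L$ is Borel (your sketch via $\bigcap_N\bigcap_{\varepsilon}\bigcup_{M\ge N}\{d(z,J(X_M))<\varepsilon\}$ is fine), but it buys you something the paper leaves implicit: because $\mathcal L(s_k)=\{J(s_k)\}$ you get $I=J$ on $\mathcal D$, so the sequence in Property~(2) is produced directly from your approximation scheme rather than inferred after the fact from membership in $\Theta$. One small point to tidy: your precompactness claim should be stated in $G\Omega_{p'}(\RR^d)\times V_{p'}(\RR^d)$ (or with the uniform topology on the $\Phi$-component), since bounded sets in $V_p$ are not precompact in $p$-variation; the paper handles this automatically by taking limits in $G\Omega_{p'}(\RR^n\oplus\RR^d\oplus\RR^d)$ and reading off the $\Phi$-component.
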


\begin{proof}
 For any $X\in G\Omega_p(\RR^n)$, there exists a sequence
of smooth rough paths $\{X_N\}$ such that
$\lim_{N\to\infty}\|X_N-X\|_p=0$.
Let $(Y_N,\Phi_N)$ be the solution to reflecting rough differential
 equation driven by
$X_N$.
Let $\widetilde{X_N}\in G\Omega_p(\RR^n\oplus\RR^d\oplus\RR^d)$ 
be the smooth rough path associated with
$(X_N,\Phi_N,Y_N)$ similarly to
Definition~\ref{definition of rrde} (2).
By the estimate (\ref{estimate on Yi}) and (\ref{estimate on
 Phi2}),
there exists a subsequence
$\widetilde{X_{N_k}}$ which converges to an element
$\widetilde{X}$ in
$G\Omega_{p}(\RR^n\oplus \RR^d\oplus \RR^d)$
in the topology
of $G\Omega_{p'}(\RR^n\oplus\RR^d\oplus \RR^d)$
for any $p<p'<3$.
A pair of the rough path and the bounded variation path 
$\pi(\tilde{X})=(Y,\Phi)\in G\Omega_p(\RR^d)\otimes V_p(\RR^d)$
which is obtained by a projection of
$\widetilde{X}$ is a solution to 
(\ref{rrde}) driven by
$X$.
This follows from the Lyon's continuity theorem and the continuity
of the Skorohod map.
Let $\Theta\subset G\Omega_p(\RR^n\oplus \RR^d\oplus\RR^d)\times G\Omega_p(\RR^n)$ 
be the set consisting of all limit points
$(\widetilde{X},X)$.
Then clearly $\Theta$ is a closed subset.
Hence by the measurable selection theorem, there exists
a universally measurable map
${\cal I} : G\Omega_p(\RR^n)\to G\Omega_p(\RR^n\oplus \RR^d\oplus\RR^d)$
such that $\{({\cal I}(X),X)~|~X\in G\Omega_p(\RR^n)\}\subset \Theta$.
The mapping $I(X)=\pi({\cal I}(X))$ is the desired map.
\end{proof}

\section{Reflected stochastic differential equation}

In this section, we consider stronger topology than $p$-variation topology
of geometric rough path.
The set of geometric rough paths
$G\Omega_p(\RR^n)$ is the closure of the set of
smooth rough paths defined by continuous bounded variation paths
with respect to the distance $d_p$ below and consists
$X_{s,t}=(1,X^1_{s,t}, X^2_{s,t})$ where
$X^1_{s,t}, X^2_{s,t}$
are $\RR^n$ and $\RR^n\otimes \RR^n$-valued continuous maps
satisfying
Chen's identity and
\begin{align}
\sup_{0\le s<t\le T}\frac{|X^i_{s,t}|}{|t-s|^{i/p}}& <\infty.
\end{align}
The distance is given by
\begin{align*}
 d_p(X,X')&=
\sum_{i=1}^2\sup_{0\le s< t\le
 T}\frac{|X^i_{s,t}-(X^{'})^i_{s,t}|}{|t-s|^{i/p}},
\qquad X, X'\in G\Omega_p(\RR^n).
\end{align*}
$\left(G\Omega_p(\RR^n),d_p\right)$ is 
a complete separable metric space.
Let $W^n=C([0,T]\to\RR^n~|~B(0)=0)$
be the classical Wiener space.
That is, $W^n$ is a probability space with the Wiener measure $\mu$.
The coordinate process $t\mapsto B(t)$
is a realization of Brownian motion.
Let
\begin{align*}
 B^N(t)=B(t^N_{k-1})+\frac{B(t^N_{k})-B(t^N_{k-1})}{\Delta_N}
(t-t^N_{k-1})
\quad t^N_{k-1}\le t\le t^N_{k},
\end{align*}
where $t^N_k=kT/(2^N)$~$(1\le k\le 2^N)$,
$\Delta_N=2^{-N}T$ and $\Delta_kB^N=B(t^N_{k})-B(t^N_{k-1})$.
We may omit superscript $N$ in the notation 
$t^N_k$.
Consider a smooth rough path $B^N_{s,t}$ over $B^N$.
Then we can see that there exists a subset
$\Omega\subset W^n$ such that $\mu(\Omega)=1$ and any $B\in \Omega$
satisfies
that $B^N_{s,t}$ converges in $G\Omega_p(\RR^n)$.
The limit which is denoted by $B_{s,t}$ is called a
Brownian rough path.
% $i=1,2$
% \begin{align}
%  |B^i_{s,t}-(B^N)^i_{s,t}|&\le
%  \left(\ep_{p,N}(B)(t-s)\right)^{i/p},\quad i=1,2
% \label{brp 1}\\
% \max\left\{|B^i_{s,t}|, |(B^N)^i_{s,t}|\right\}&\le
% \left(C_p(B)(t-s)\right)^{i/p}\quad i=1,2\label{brp 2}
% \end{align}
% hold, where $\lim_{N\to\infty}\ep_{p,N}(B)=0$
% and $2<p<3$ .
We can take control function $\omega(s,t)$ such that
$\omega(s,t)=
C(X)(t-s)$, where
$C(X)=\left(d_p(0,X)^{p}+d_p(0,X)^{p/2}\right)$
and $X_{s,t}=B_{s,t}, B^N_{s,t}$.
It is not difficult to see that
\begin{align}
E[C(B)^q]+\sup_NE[C(B^N)^q]<\infty \quad \mbox{for any $q\ge 1$}.
\label{moment estimate for CB}
\end{align}
Now, again, we assume $\sigma\in C^2_b(\RR^d,\RR^n\otimes \RR^d)$ 
throughout this section.
Let $Y^N$ be the solution to reflected ODE on a domain $D\subset \RR^d$:
\begin{align}
dY^N(t)&=\sigma(Y^N(t))dB^N(t)+d\Phi^N(t), \quad Y^N(0)=y_0.
\label{reflected ode}
\end{align}
Under the assumption (H1),
by Theorem~\ref{main theorem for p-rough path}
in the Section 4,
we have
\begin{align}
|Y^N(B)^i_{s,t}|&\le
g(d_p(0,B^N))(t-s)^{i/p}\quad i=1,2,\label{brp 3}\\
\|\Phi^N(B)\|_{[s,t]}&\le 
g(d_p(0,B^N))(t-s)^{1/p}, \label{brp 4}
\end{align}
where $g$ is a polynomial function.
Therefore, by the same reasoning as in the proof of
Theorem~\ref{main theorem for p-rough path}, for any $B\in \Omega$,
there exists a subsequence $N_k(B)\uparrow+\infty$ such that
$Y^{N_k(B)}(B)_{s,t}$ and
$\Phi^{N_k(B)}(B)(t)$ converge in the topology of $p'$-rough path and
$p'$-variation path respectively.
The limit is a solution to reflected rough differential equation driven
by $B_{s,t}$.
However, we cannot conclude that the limit and the solution is unique
by this argument.
On the other hand, the solution $Y^N$ is the Wong-Zakai approximation of
$Y^S(t)$ which is the solution to the reflected SDE driven by Brownian motion:
\begin{align*}
 dY^S(t)&=\sigma(Y^S(t))\circ dB(t)+d\Phi^S(t),\quad Y^S(0)=y_0,
\end{align*}
where $\circ dB(t)$ denotes the Stratonovich integral
and $\Phi^S$ is the local time term.
We use the notation $Y^S$ to distinguish the solution in the sense of
It\^o calculus from the solution in the sense of rough path.
Note that in \cite{aida-sasaki}, we used the notation
$X^N(t)$ for the Wong-Zakai approximation.
Let us consider the case $D=\RR^{d}$.
Then Lyon's continuity theorem
and the coincidence of the solution in the sense of
It\^o's SDE and rough differential equations,
imply that the Wong-Zakai approximation of
the solution converges to the solution in the sense of
It\^o calculus uniformly.
However, we cannot do such a thing if $\partial D\ne \emptyset$
because we do not prove the continuity theorem yet.
In \cite{aida-sasaki}, we proved that
$Y^N(t)$ converges to $Y^S(t)$ uniformly on $[0,T]$
for almost all $B$.
By the results in \cite{aida-sasaki},
we can prove the following lemma.

\begin{lem}\label{refinement}
Assume conditions {\rm (A), (B), (C)} are satisfied for $D$.
Then for any $\ep>0$, there exists a positive constant
$C_{\ep}(T)$ independent of $N$ such that
\begin{align*}
E\left[
\max_{0\le t\le T}\left|\int_0^t\sigma(Y^N(s))dB^N(s)-\int_0^t\sigma(Y^S(s))\circ
 dB(s)\right|^2\right]
&\le C_{\ep}(T)\cdot 2^{-(1-\ep)N/6}.
\end{align*}
\end{lem}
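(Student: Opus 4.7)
The plan is to reduce the claim, via the two reflected equations themselves, to the $L^2$ Wong-Zakai convergence rates established in \cite{aida-sasaki}. First, substituting the two Skorohod equations,
\begin{align*}
\int_0^t\sigma(Y^N(s))\,dB^N(s) &= Y^N(t)-y_0-\Phi^N(t),\\
\int_0^t\sigma(Y^S(s))\circ dB(s) &= Y^S(t)-y_0-\Phi^S(t),
\end{align*}
and subtracting converts the quantity inside the expectation into $\bigl(Y^N(t)-Y^S(t)\bigr)-\bigl(\Phi^N(t)-\Phi^S(t)\bigr)$. Thus by $|a-b|^2\le 2|a|^2+2|b|^2$ it suffices to establish
$$E\Bigl[\max_{0\le t\le T}|Y^N(t)-Y^S(t)|^2\Bigr]+E\Bigl[\max_{0\le t\le T}|\Phi^N(t)-\Phi^S(t)|^2\Bigr]\le C_\ep(T)\cdot 2^{-(1-\ep)N/6}.$$

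Next I would invoke the quantitative Wong-Zakai estimates from \cite{aida-sasaki}, which under (A), (B), (C) yield exactly the rate $2^{-(1-\ep)N/6}$ for the $L^2$ error between $Y^N$ and $Y^S$, together with the corresponding estimate for the local time terms. Where only one of the two is recorded explicitly there, the other can be recovered from the identity above by controlling the integral difference through the split
$$\int_0^t\bigl[\sigma(Y^N(s))-\sigma(Y^S(s))\bigr]dB^N(s)+\int_0^t\sigma(Y^S(s))\bigl[dB^N(s)-\circ dB(s)\bigr],$$
where the first piece is handled by the Lipschitz continuity of $\sigma$ combined with the bound on $Y^N-Y^S$, and the second is a classical Wong-Zakai integral with the $B$-adapted integrand $\sigma(Y^S)$, for which sharp $L^2$-rates are well known.

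The main obstacle is upgrading the almost-sure convergence recalled just before the lemma to an $L^2$-statement with the specific rate $2^{-(1-\ep)N/6}$. The required uniform integrability is furnished by the $p$-variation estimates (\ref{brp 3}) and (\ref{brp 4}) together with the moment bound (\ref{moment estimate for CB}), which yield uniform $L^q$ bounds for every $q\ge 1$ on the variation norms of $Y^N$ and $\Phi^N$; combined with the pathwise rate from \cite{aida-sasaki}, a standard truncation and interpolation argument then produces the desired $L^2$ bound at the cost only of the arbitrarily small loss $\ep$ in the exponent.
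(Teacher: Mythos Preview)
Your reduction via the two Skorohod equations to $(Y^N-Y^S)-(\Phi^N-\Phi^S)$ is algebraically correct, but the argument that follows has two genuine gaps.

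First, the step ``the first piece is handled by the Lipschitz continuity of $\sigma$ combined with the bound on $Y^N-Y^S$'' does not work as stated. The integrator $B^N$ has total variation of order $2^{N/2}$ on $[0,T]$, so a naive bound $\|\sigma(Y^N)-\sigma(Y^S)\|_\infty\cdot\|B^N\|_{[0,T]}$ explodes. Nor can you invoke Burkholder--Davis--Gundy directly: on each subinterval $[t_{k-1},t_k]$ the process $Y^N(s)$ depends on $B(t_k)$ and is therefore not $\mathcal F_{t_{k-1}}$-measurable. To make this piece work you must first freeze $Y^N$ at $\pi^N(s)$, which produces an adapted martingale term (this is the paper's $I_1^N$), and then control the remainders coming from $\sigma(Y^N(s))-\sigma(Y^N(\pi^N(s)))$. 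Those remainders, after a Taylor expansion, generate precisely the Stratonovich correction term and a boundary term involving $d\Phi^N$; the paper treats these separately as $I_2^N$ and $I_3^N$. None of this is captured by ``Lipschitz plus $Y^N-Y^S$''. A related issue is that \cite{aida-sasaki} records the $L^2$ rate for $Y^N-Y^S$ (Theorem~2.9 there) but not for $\Phi^N-\Phi^S$; indeed in the present paper the convergence of $\Phi^N$ to $\Phi^S$ is \emph{deduced from} this lemma, so citing it as an input is circular.

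Second, your final paragraph appeals to (\ref{brp 3}), (\ref{brp 4}) and (\ref{moment estimate for CB}) to obtain uniform integrability. Those estimates are stated under {\rm (H1)}, whereas the lemma assumes only {\rm (A), (B), (C)}; you cannot invoke them here. The paper's proof avoids this by working directly in $L^2$, decomposing $\int_0^{t_l}\sigma(Y^N)dB^N-\int_0^{t_l}\sigma(Y^S)\circ dB$ into $I_1^N+I_2^N+I_3^N$ and estimating each piece with BDG and the quantitative bounds (Lemma~2.8, Theorem~2.9, Lemma~4.5) of \cite{aida-sasaki}, never needing pathwise $p$-variation control of $Y^N$ or $\Phi^N$.
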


Thanks to the lemma above, applying the Borel-Cantelli lemma, 
we see that there exists a full measure subset 
$\Omega'\subset W^n$ such that
\begin{align*}
 \max_{0\le t\le T}\left|\int_0^t\sigma(Y^N(s))dB^N(s)-
\int_0^t\sigma(Y^S(s))\circ dB(s)\right|\to 0
\quad \mbox{for all $B\in \Omega'$}.
\end{align*}
Hence by the continuity property of
the Skorohod mapping, $\Phi^N(t)$ also converges to
$\Phi^S(t)$ uniformly for all $B\in \Omega'$.
Therefore, $Y^N(B)_{s,t}$ 
converges to a certain $p$-rough path $Y(B)_{s,t}$
for all $B\in \Omega'\cap \Omega$, without taking subsequences.
The pair $(Y(B)_{s,t},\Phi^S(t))$
is a solution to rough differential equation
driven by $B\in \Omega\cap \Omega'$
and $Y^S(t)=y_0+Y(B)^1_{0,t}$.
Also (\ref{brp 3}) and (\ref{brp 4}) imply
the following.

\begin{thm}
Assume {\rm (A), (B), (C), (H1)}.
Then we have for $0\le s\le t\le T$,
\begin{align*}
 |Y^S(t)-Y^S(s)|&\le C \left(1+d_p(0,B)\right)^3|t-s|^{1/p},\\
\|\Phi^S\|_{[s,t]}&\le C \left(1+d_p(0,B)\right)^3|t-s|^{1/p},
\end{align*}
where $C$ is a positive constant
which depends only on 
$\sigma, D, p$.
\end{thm}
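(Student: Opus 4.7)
The plan is to deduce these estimates by passing to the limit in the corresponding estimates for the Wong--Zakai approximations $(Y^N, \Phi^N)$, which are already available from Theorem~\ref{main theorem for p-rough path}. Recall that we have established, for $B$ in a full-measure set $\Omega \cap \Omega'$, that $Y^N(t) \to Y^S(t)$ and $\Phi^N(t) \to \Phi^S(t)$ uniformly on $[0,T]$ (and this time without extracting a subsequence, thanks to Lemma~\ref{refinement} and the continuity of the Skorohod map). This is the crucial input that lets us avoid identifying rough-path limits.

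First, I would invoke the estimates (\ref{brp 3}) and (\ref{brp 4}):
\begin{align*}
|Y^N(t) - Y^N(s)| &= |Y^N(B)^1_{s,t}| \le g(d_p(0, B^N))\,(t-s)^{1/p},\\
\|\Phi^N\|_{[s,t]} &\le g(d_p(0, B^N))\,(t-s)^{1/p},
\end{align*}
where $g$ is the polynomial provided by Theorem~\ref{main theorem for p-rough path}. Since $B^N \to B$ in $G\Omega_p(\RR^n)$ for $B \in \Omega$, the triangle inequality gives $d_p(0,B^N) \to d_p(0,B)$, so the right-hand sides converge to $g(d_p(0,B))(t-s)^{1/p}$.

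For the estimate on $Y^S$, I simply pass to the limit $N \to \infty$ on both sides using the uniform convergence $Y^N \to Y^S$ and the continuity of $g$. For the estimate on $\Phi^S$, I use the fact that the total variation seminorm $\|\cdot\|_{[s,t]}$ is lower semicontinuous with respect to uniform convergence on $[s,t]$, so
\[
\|\Phi^S\|_{[s,t]} \;\le\; \liminf_{N\to\infty} \|\Phi^N\|_{[s,t]} \;\le\; g(d_p(0,B))(t-s)^{1/p}.
\]
Finally, I would rewrite $g(x)$ in the form $C(1+x)^3$, absorbing the $T$-dependent factors and powers of the control $C(X) = d_p(0,X)^p + d_p(0,X)^{p/2}$ into the constant $C$ (which then depends on $\sigma, D, p$ and $T$ through $\omega(0,T) = C(B)T$), to match the form stated in the theorem.

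The only nontrivial point is the lower semicontinuity step for $\|\Phi^N\|_{[s,t]}$, but this is standard: for any partition $s = r_0 < r_1 < \cdots < r_M = t$ we have $\sum_j |\Phi^N(r_j) - \Phi^N(r_{j-1})| \to \sum_j |\Phi^S(r_j) - \Phi^S(r_{j-1})|$ by pointwise convergence, so $\sum_j |\Phi^S(r_j) - \Phi^S(r_{j-1})| \le \liminf_N \|\Phi^N\|_{[s,t]}$, and taking the supremum over partitions preserves the inequality. No subsequence extraction or uniqueness of the rough-path solution is needed, because the It\^o/Stratonovich object $\Phi^S$ is already singled out by the SDE theory, and we are merely reading off regularity from the uniform bounds on the approximants.
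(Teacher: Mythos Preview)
Your proposal is correct and matches the paper's approach: the paper does not give a separate proof of this theorem but simply states that the estimates (\ref{brp 3}) and (\ref{brp 4}) imply it, relying on the preceding paragraph where $Y^N\to Y^S$ and $\Phi^N\to\Phi^S$ uniformly on $\Omega\cap\Omega'$ was established. You have spelled out exactly the passage to the limit the paper leaves implicit, including the lower semicontinuity of the total variation for $\Phi^S$.
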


\begin{proof}[Proof of Lemma~$\ref{refinement}$]
In this proof, we use the estimate obtained in \cite{aida-sasaki}.
Note that some notation there are different from
those in this paper.
Take points such that $t_l<t\le t_{l+1}$.
We have
\begin{align*}
\left|\int_0^t\sigma(Y^N(s))dB^N(s)-\int_0^{t_l}\sigma(Y^N(s))dB^N(s)\right|
&\le
C|\Delta_lB^N|.
\end{align*}
Hence 
\begin{align*}
\lefteqn{E\left[
\max_{0\le s\le t}\left|\int_0^s\sigma(Y^N(u))dB^N(u)-
\int_0^s\sigma(Y^S(u))\circ
 dB(u)\right|^2\right]}\nonumber\\
&\le
3E\left[
\max_{0\le k\le l}\left|\int_0^{t_k}\sigma(Y^N(s))dB^N(s)-\int_0^{t_k}
\sigma(Y^S(s))\circ
 dB(s)\right|^2\right]\nonumber\\
&\quad +3CE\left[\max_{k}|\Delta_kB^N|^{2}\right]+
3E\left[\max_{|u-v|\le 2^{-N}T, 0\le u\le v\le T}
\left|\int_u^v\sigma(Y^S(s))\circ dB(s)\right|^{2}\right]\nonumber\\
&\le
3E\left[
\max_{0\le k\le l}\left|\int_0^{t_k}\sigma(Y^N(s))dB^N(s)-\int_0^{t_k}
\sigma(Y^S(s))\circ
 dB(s)\right|^2\right]
+C_{\ep}\left(2^{-N}T\right)^{1-\ep},
\end{align*}
where $\ep$ is any positive number.
Let $\pi^N(t)=\max\{t^N_k~|~t^N_k\le t\}$.
 \begin{align*}
 \lefteqn{\int_0^{t_l}\sigma(Y^N(s))dB^N(s)-\int_0^{t_l}
\sigma(Y^S(s))\circ
 dB(s)}\nonumber\\
&=
\int_0^{t_l}\left(\sigma(Y^N(\pi^N(s)))-\sigma(Y^S(s))\right)dB(s)\nonumber\\
&\quad +\Biggl\{
\sum_{k=1}^l
\int_{t_{k-1}}^{t_k}\int_{t_{k-1}}^s
\left(D\sigma\right)(Y^N(u))\left(\sigma(Y^N(u))\frac{\Delta_k
  B^N}{\Delta_N}du\right)
\left(\frac{\Delta_k B^N}{\Delta_N}\right)ds\nonumber\\
&\quad
-\int_0^{t_l}\frac{1}{2}\tr\left(D\sigma\right)(Y^S(s))
(\sigma(Y^S(s)))ds\Biggr\}\nonumber\\
&\quad +
\sum_{k=1}^l
\int_{t_{k-1}}^{t_k}\int_{t_{k-1}}^s
\left(D\sigma\right)(Y^N(u))\left(\sigma(Y^N(u))d\Phi^N(u)\right)
\left(\frac{\Delta_k B^N}{\Delta_N}\right)ds
\nonumber\\
&=:I_1^N(t_l)+I^N_2(t_l)+I^N_3(t_l).
 \end{align*}
Noting
\begin{align*}
I^N_1(t)&=
\int_0^t
\Bigl(\sigma(Y^N(\pi^N(s)))-\sigma(Y^S(\pi^N(s)))\Bigr)dB(s)\nonumber\\
&\quad +
\int_0^t\Bigl(\sigma(Y^S(\pi^N(s)))-\sigma(Y^S(s))\Bigr)dB(s),
\end{align*}
and by using Burkholder-Davis-Gundy's inequality and
estimates in Theorem 2.9 and Lemma~4.5 in \cite{aida-sasaki},
we obtain
\begin{align*}
E\left[\max_{0\le s\le t}|I^N_1(s)|^{2}\right]&\le
Ct\left(2^{-N}T\right)^{(1-\ep)/6}
+C\cdot 2^{-N}tT.
\end{align*}

\begin{align*}
 I_2^N(t_l)&=
\frac{1}{2}\int_0^{t_{l}}
\left(\tr\left(D\sigma\right)(Y^N(\pi^N(s)))(\sigma(Y^N(\pi^N(s))))
-\tr\left(D\sigma\right)(Y^S(s))(\sigma(Y^S(s)))\right)
ds\nonumber\\
&\quad +
\sum_{k=1}^l
\int_{t_{k-1}}^{t_k}\int_{t_{k-1}}^s
\Biggl\{\left(D\sigma\right)(Y^N(u))\left(\sigma(Y^N(u))\frac{\Delta_k
  B^N}{\Delta_N}\right)\nonumber\\
&\quad -\left(D\sigma\right)(Y^N(\pi^N(u)))\left(\sigma(Y^N(\pi^N(u)))\frac{\Delta_k
  B^N}{\Delta_N}\right)\Biggr\}du\,
\left(\frac{\Delta_k B^N}{\Delta_N}\right)
ds\nonumber\\
&\quad +
\frac{1}{2}\sum_{k=1}^l
\Biggl\{\left(D\sigma\right)(Y^N(t_{k-1}))
\left(\sigma(Y^N(t_{k-1}))\Delta_k
  B^N\right)\left(\Delta_kB^N\right)\nonumber\\
&\quad-
\sum_{i=1}^n
\left(D\sigma\right)(Y^N(t_{k-1}))\left(\sigma(Y^N(t_{k-1}))e_i\right)
\left(e_i\right)2^{-N}T\Biggr\},\nonumber\\
&=:I^N_{2,1}(t_l)+I^N_{2,2}(t_l)+I^N_{2,3}(t_l)
\end{align*}
where $e_i$ is a unit vector in $\RR^d$ whose $i$-th element is equal to
$1$.
We have
\begin{align*}
 |I^N_{2,1}(t_l)|&\le
C\int_0^{t_l}|Y^N(s)-Y^S(s)|ds\nonumber\\
&\quad +Ct_l
\max_{0\le u\le v\le T, |v-u|\le 2^{-N}T}\left(|Y^N(v)-Y^N(u)|+|Y^S(v)-Y^S(u)|\right),
\end{align*}
\begin{align*}
 |I^N_{2,2}(t_l)|&\le C
\sum_{k=1}^l\max_{0\le u\le v\le T, |v-u|\le 2^{-N}T}
|Y^N(v)-Y^N(u)|\cdot|\Delta_kB^N|^2.
\end{align*}
By Burkholder-Davis-Gundy's inequality, 
we have
\begin{align*}
 E\left[\max_{1\le k\le l}
|I^N_{2,3}(t_k)|^{2}
\right]&\le
CE\left[
\sum_{k=1}^l\eta_k
\right],
\end{align*}
where
\begin{align*}
 \eta_k=\sum_{i=1}^n\left((\xi^i_k)^2-2^{-N}T\right)^2
+\sum_{1\le i<j\le n}(\xi^i_{k})^2(\xi^{j}_k)^2.
\end{align*}
Here $\xi^i_{k}=B^i(t_k)-B^i(t_{k-1})$~$(1\le i\le n)$
which is the increment of
the $i$-th element of the Brownian motion.
By the estimates in Lemma~2.8, Theorem~2.9  and Lemma~4.5 in
 \cite{aida-sasaki} 
and arguing similarly to pages 3813 and 3814 in \cite{aida-sasaki},
we have
\begin{align*}
E\left[\max_{1\le k\le l}|I^N_{2,1}(t_k)|^{2}\right]&\le Ct_l^2
\left(2^{-N}T\right)^{(1-\ep)/6}+Ct_l^{2}\left(2^{-N}T\right)^{1-\ep}\\
E\left[\max_{1\le k\le l}|I^N_{2,2}(t_k)|^{2}\right]&\le
C\left(2^{-N}T\right)^{1-\ep}\\
 E\left[\max_{1\le k\le l}|I_{2,3}^N(t_k)|^{2}\right]
&\le
C\cdot 2^{-N}T.
\end{align*}
Finally, since
$
\max_{0\le t\le T} 
|I_3^N(t)|\le C
\|\Phi^N\|_{[0,T]}\max_{k}|\Delta_kB^N|
$
we have
\begin{align*}
 E\left[\max_{0\le t\le T} 
|I_3^N(t)|^2\right]
&\le (2^{-N}T)^{1-\ep}
\end{align*}
which completes the proof.
\end{proof}

Finally, we discuss the relation 
between the solution to reflected rough differential equation
which is obtained as a limit of the Euler approximation
defined in (\ref{implicit skorohod equation}) and $Y^S$.
For each $B_{s,t}$, we see the existence
of the solution $y^{\Delta}(B,t)$.
However, it is not trivial to see that a certain version of
$y^{\Delta}(B,t)$ is a semimartingale.
Therefore we need the following proposition.

\begin{pro}
Assume $D$ is convex and satisfies {\rm (H1)}.
Let $\{B_t(w)\}$ be an ${\cal F}_t$-Brownian motion
on a probability space $(S,{\cal F},P)$
and $\eta_t(w)$ be a continuous ${\cal F}_t$-semimartingale
 with $E[\|\eta\|_{\infty,[s,t]}^q]\le C_q(t-s)^{q/2}$ for
all $q\ge 1$ and $0\le s\le t\le T$.
We consider the following equation.
\begin{align}
Y_t(w)&=y_0+\eta_t(w)+F\left(\int_0^t\Phi(r,w)\otimes
 dB_r(w)\right)+
\Phi(t,w)\qquad 0\le t\le T,
\label{euler approximation of rrde 2}
\end{align}
where
$Y_t(w)$ is an
${\cal F}_t$-adapted continuous process
and $\Phi(t,w)$ is an
${\cal F}_t$-adapted continuous bounded variation
process,
and
$(Y_t(w), \Phi(t,w))$ is the solution to the Skorohod problem
associated with
$y_0+\eta_t(w)+F\left(\int_0^t\Phi(r,w)\otimes
 dB_r(w)\right)$.
For this problem, there exists a unique solution.
\end{pro}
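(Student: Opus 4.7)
The plan is to set up a Picard iteration on an appropriate space of adapted continuous bounded variation processes, exploiting convexity of $D$ via Tanaka's nonexpansive property of the Skorohod map together with a stochastic integration-by-parts identity to tame the implicit integral. Throughout I work first on a small subinterval $[0,T']$ and then concatenate.

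The starting observation is that any candidate $\Phi$ is continuous and of bounded variation, so it has zero quadratic variation and Itô's formula gives
\begin{align*}
\int_0^t \Phi(r)\otimes dB_r \;=\; \Phi(t)\otimes B_t \;-\; \int_0^t B_r\otimes d\Phi(r),
\end{align*}
a purely pathwise identity. Consequently, for two candidates $\Phi,\Phi'$ one has the pathwise bound
\begin{align*}
\left|\int_0^t\bigl(\Phi(r)-\Phi'(r)\bigr)\otimes dB_r\right|
\;\le\; \|B\|_{\infty,[0,t]}\Bigl(|\Phi(t)-\Phi'(t)| + \|\Phi-\Phi'\|_{[0,t]}\Bigr).
\end{align*}
This is the key device that converts the implicit stochastic integral into an estimate involving the total variation of $\Phi-\Phi'$, bypassing any need for Itô-isometry acrobatics.

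Next I define the Picard map $\mathcal{T}$ by $\mathcal{T}(\Psi)(t) = L(z^{\Psi})(t)$ where $z^{\Psi}_t = y_0+\eta_t+F\bigl(\int_0^t\Psi(r)\otimes dB_r\bigr)$. Since $z^{\Psi}$ is an adapted continuous semimartingale and $D$ satisfies (H1), $\mathcal{T}(\Psi)$ is well-defined and adapted, and (H1) gives $\|\mathcal{T}(\Psi)\|_{[0,T']}\le C_D\|z^{\Psi}\|_{\infty,[0,T']}$. Since $D$ is convex, Tanaka's theorem gives the pathwise contraction $\|L(w)-L(w')\|_{\infty,[0,t]}\le C\|w-w'\|_{\infty,[0,t]}$ and hence $\|\mathcal{T}(\Psi)-\mathcal{T}(\Psi')\|_{\infty,[0,T']}\le C\|F\|\,\|B\|_{\infty,[0,T']}\bigl(\|\Psi-\Psi'\|_{\infty,[0,T']}+\|\Psi-\Psi'\|_{[0,T']}\bigr)$ via the IBP bound above. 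Combining with the (H1) control of the bounded variation norm, I work in the Banach space of adapted continuous BV processes normed by $\|\Phi\|_* = E\bigl[\|\Phi\|_{[0,T']}^2\bigr]^{1/2}$ and, using the moment bounds on $\|B\|_{\infty,[0,T']}$ (which are $O(T'^{1/2})$) together with those on $\eta$, choose $T'$ small enough that $\mathcal{T}$ is a strict contraction. This yields a unique fixed point on $[0,T']$, which provides both $\Phi$ and hence $Y=\Gamma(z^{\Phi})$.

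For the extension to $[0,T]$ I iterate on successive intervals $[T',2T'],\dots$, noting that the choice of $T'$ depends only on $\|F\|$, $C_D$, and the moment constants $C_q$ of $\eta$, not on the starting configuration; this is analogous to the iteration at the end of the proof of the existence lemma earlier in the paper. Uniqueness on the whole interval follows because any two solutions must coincide on $[0,T']$ by the contraction, and then the argument reapplies starting from $T'$. The main obstacle is the simultaneous control of sup-norm and bounded-variation norm: the stochastic integral estimate forces the variation norm into the iteration, while the Skorohod map is Lipschitz only in sup-norm; the IBP identity together with convexity of $D$ and (H1) is precisely what bridges these two scales, and getting the constants aligned so that a single small $T'$ controls both norms is the delicate point.
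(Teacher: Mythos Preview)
Your integration-by-parts device is elegant but it creates a gap you do not close. After IBP the right-hand side of your contraction estimate involves the \emph{total-variation} norm $\|\Psi-\Psi'\|_{[0,T']}$, so to iterate you must also control $\|\mathcal{T}(\Psi)-\mathcal{T}(\Psi')\|_{[0,T']}$. Assumption (H1) gives only an absolute bound $\|L(w)\|_{[0,t]}\le C_D\|w\|_{\infty,[0,t]}$, not a Lipschitz bound on $w\mapsto L(w)$ in BV norm; that Lipschitz property is precisely (H2), which is \emph{not} assumed here. Tanaka's convexity result controls $L$ only in sup-norm. Hence the loop ``sup-norm $\to$ BV-norm $\to$ sup-norm'' does not close under the stated hypotheses, and your admission that ``getting the constants aligned \ldots\ is the delicate point'' is in fact the unresolved obstruction. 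A secondary issue: even for the sup-norm half of the argument, the pathwise factor $\|B\|_{\infty,[0,T']}$ is random and correlated with $\Psi,\Psi'$, so passing to the $L^2$ norm $\|\cdot\|_*$ does not factor as you suggest.

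Ironically, the fix is to \emph{use} the It\^o isometry you set out to avoid: by Burkholder--Davis--Gundy,
\[
E\Bigl[\sup_{t\le T'}\Bigl|\int_0^t(\Psi-\Psi')(r)\otimes dB_r\Bigr|^2\Bigr]\le C\int_0^{T'}E\bigl[|\Psi(r)-\Psi'(r)|^2\bigr]\,dr\le CT'\,E\bigl[\|\Psi-\Psi'\|_{\infty,[0,T']}^2\bigr],
\]
and then Tanaka's sup-norm Lipschitz property of $L$ closes a genuine contraction in $E[\|\cdot\|_\infty^2]^{1/2}$ for small deterministic $T'$, with no BV norm appearing at all. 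The paper takes yet another route: it builds Euler approximations $Y^{\Delta}$, applies It\^o's formula to $|Y^{\Delta}-Y^{\Delta'}|^2$, uses convexity to make the reflection cross-term $\bigl(Y^{\Delta}-Y^{\Delta'},\,d\Phi^{\Delta}-d\Phi^{\Delta'}\bigr)$ nonpositive, and concludes by Gronwall in $L^2$. Both the corrected Picard argument and the paper's Euler argument hinge on the same two ingredients (convexity for the reflection term, $L^2$-smallness of the stochastic integral over short times), but the paper's version never needs to set up a contraction mapping or worry about BV norms of differences.
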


\begin{proof}
 We consider again an Euler approximation.
Let $\Delta=\{t_k\}$ be a partition of $[0,T]$.
We write $|\Delta|=\max_k(t_k-t_{k-1})$
and $\pi^{\Delta}(t)=\max\{ t_k~|~t_k\le t\}$.
Let $Y^{\Delta}_t$ be the solution to
the Skorohod equation:
\begin{align*}
 Y^{\Delta}_t&=Y^{\Delta}_{t_{k-1}}+\eta_t-\eta_{t_{k-1}}+
F\left(\Phi^{\Delta}(t_{k-1})\otimes(B_t-B_{t_{k-1}})\right)
+\Phi^{\Delta}(t)-\Phi^{\Delta}(t_{k-1})
\qquad t_{k-1}\le t\le t_{k}.
\end{align*}
Then $Y^{\Delta}, \Phi^{\Delta}$ satisfy
\begin{align*}
 Y^{\Delta}_t&=y_0+\eta_t+
\int_0^tF\left(\Phi^{\Delta}(\pi^{\Delta}(t))\otimes dB_t\right)+\Phi^{\Delta}(t).
\end{align*}
Let $q\ge 2$.
By the assumption (H1), we have
\begin{align*}
E\left[\|\Phi^{\Delta}\|_{\infty,[s,t]}^q\right] &\le
C_q(t-s)^{q/2}+C_q(t-s)^{(q-2)/2}\int_s^tE\left[
|\Phi^{\Delta}(\pi^{\Delta}(u))|^q\right]du.
\end{align*}
Hence by considering the case where $s=0$,
we have
\begin{align*}
E\left[\|\Phi^{\Delta}\|_{\infty,[0,t]}^q\right] &\le
C_qt^{q/2}+C_qt^{(q-2)/2}\int_0^t
E\left[\|\Phi^{\Delta}\|_{\infty,[0,u]}^q\right]du
\end{align*}
and by the Gronwall inequality, we get
$E\left[\|\Phi^{\Delta}\|_{\infty,[0,t]}^q\right]\le
 C_qT^{q/2}\exp\left(T^{q/2}\right)$.
Thus, we obtain
\begin{align}
E\left[\|\Phi^{\Delta}\|_{\infty,[s,t]}^q\right]&\le
C_q\left(1+T^{q/2} \exp\left(T^{q/2}\right)\right)(t-s)^{q/2}
\quad 0\le s\le t\le T.\label{estimate for Phidelta}
\end{align}
Let $\Delta'$ be another partition of $[0,T]$.
 Define
\begin{align*}
Z(t)&=Y^{\Delta}(t)-Y^{\Delta'}(t),\\
k(t)&=|Z(t)|^2.
\end{align*}
By the It\^o formula, we have
\begin{align}
dk(t)
&=
2\left(Z(t),F\Bigl(\left(\Phi^{\Delta}(\pi^{\Delta}(t))
-\Phi^{\Delta'}(\pi^{\Delta'}(t))\right)\otimes dB_t\right)\Bigr)\nonumber\\
&+
\sum_{i=1}^d\Bigl |F\left(
\Phi^{\Delta}(\pi^{\Delta}(t))-\Phi^{\Delta'}(\pi^{\Delta'}(t)),e_i
\right)\Bigr |^2dt
\Biggr\}\nonumber\\
&+
2\left(Z(t),d\Phi^{\Delta}(t)-d\Phi^{\Delta'}(t)\right).
\label{dk}
\end{align}
By the convexity of $D$,
we obtain
\begin{align*}
 E\left[|Y^{\Delta}_t-Y^{\Delta'}_t|^2\right]&\le
C_F\int_0^t E\left[|\Phi^{\Delta}(u)-\Phi^{\Delta'}(u)|^2\right]du
+C(|\Delta|+|\Delta'|)t,
\end{align*}
where we have used the estimate (\ref{estimate for Phidelta})
and the positive constant $C_F$ depends on the (Hilbert-Schmidt) 
norm of $F$.
Combining the above inequality and the identity
\begin{align}
 \Phi^{\Delta}(t)-\Phi^{\Delta'}(t)&=Y^{\Delta}(t)-Y^{\Delta'}(t)
-\int_0^tF\Bigl(\left(\Phi^{\Delta}(u)-\Phi^{\Delta'}(u)\right)\otimes dB_u\Bigr),
\label{Phi and Y}
\end{align}
we obtain
\begin{align*}
 E\left[|Y^{\Delta}_t-Y^{\Delta'}_t|^2\right]&\le
C(|\Delta|+|\Delta'|)t+
2C_F\int_0^tE[|Y^{\Delta}_u-Y^{\Delta'}_u|^2]du\nonumber\\
&\quad +
2C_F^2\int_0^t\int_0^u
E[|\Phi^{\Delta}(r)-\Phi^{\Delta'}(r)|^2]drdu.
\end{align*}
Iterating this procedure, we have
\begin{align*}
 E[|Y^{\Delta}_t-Y^{\Delta'}_t|^2]
&\le C(|\Delta|+|\Delta'|)t+
C\int_0^tE[|Y^{\Delta}_s-Y^{\Delta'}_s|^2]ds.
\end{align*}
By the Gronwall inequality, we obtain
\begin{align*}
 E[|Y^{\Delta}_t-Y^{\Delta'}_t|^2]
&\le C(|\Delta|+|\Delta'|)e^{Ct}t.
\end{align*}
Therefore, by (\ref{Phi and Y}),
\begin{align*}
 E\left[|\Phi^{\Delta}(t)-\Phi^{\Delta'}(t)|^2\right]
&\le 2C(|\Delta|+|\Delta'|)e^{Ct}t+C_F\int_0^t
E\left[|\Phi^{\Delta}(s)-\Phi^{\Delta'}(s)|^2\right]ds
\end{align*}
and
\begin{align*}
 E\left[|\Phi^{\Delta}(t)-\Phi^{\Delta'}(t)|^2\right]&\le
2C(|\Delta|+|\Delta'|)e^{(C+C_F)t}t.
\end{align*}
Therefore $L^2$-limit $Y_t:=\lim_{|\Delta|\to 0}Y^{\Delta}_t$
and $\Phi(t):=\lim_{|\Delta|\to 0}\Phi^{\Delta}(t)$ exist.
Moreover there exists a subsequence $\Delta$ such that
$\int_0^tF\left(\Phi^{\Delta}(\pi^{\Delta}(s))\otimes dB(s)\right)$
converges to $\int_0^tF\left(\Phi(s)\otimes dB(s)\right)$ $0\le t\le T$
uniformly $P$-a.s.~$\omega$.
Thus, by the continuity of the Skorohod mapping, we see that
the pair $(Y,\Phi)$ is a solution.
We prove the uniqueness.
Let $(Y,\Phi)$ and $(Y',\Phi')$ be solutions
to (\ref{euler approximation of rrde 2}).
Then by a similar calculation to (\ref{dk}), we have
\begin{align*}
 E\left[|Y(t)-Y'(t)|^2\right]&\le
C_F\int_0^tE[|\Phi(s)-\Phi'(s)|^2]ds.
\end{align*}
By arguing similarly to the above, we complete the proof.
\end{proof}

We consider solutions to
(\ref{implicit skorohod equation}) when
$X_{s,t}=B_{s,t}$.
By applying the above proposition, 
we see
that the solution $(y^{\Delta}(B),\Phi^{\Delta}(B))$
is unique 
in the set of semimartingales.
We obtain the following convergence speed of
$y^{\Delta}(B)$.
Below we denote $y^{\Delta}(B)$ by $y^{\Delta}$ simply.

\begin{thm}
Assume that $D$ is convex and {\rm (H1)} is
satisfied. Let $\Delta_N=\{2^{-N}kT\}_{k=0}^{2^N}$.
There exists a full measure set $\Omega'\subset \Omega$
such that for any $B\in \Omega'$,
$y^{\Delta_N}$ and
$\Phi^{\Delta_N}$ converge to $Y^S$ and $\Phi^{S}$ 
uniformly respectively and
\begin{align}
E\left[\sup_{0\le t\le T}
|y^{\Delta_N}_t-Y^S(t)|^2\right]\le C_T\Delta_N^{4/p}.\label{ydeltaN
 and YS}
\end{align}
\end{thm}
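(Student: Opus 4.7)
The strategy has four components: (i) identify $(y^{\Delta_N},\Phi^{\Delta_N})$ as an adapted semimartingale by invoking the preceding proposition; (ii) rewrite the scheme in It\^o form so that the comparison with $Y^S$ takes place in the It\^o semimartingale world; (iii) run an energy estimate on $Z_t=y^{\Delta_N}_t-Y^S(t)$ using It\^o's formula and convexity of $D$; and (iv) upgrade to the a.s.\ uniform statement via Burkholder--Davis--Gundy (BDG) and Borel--Cantelli.

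For step (i), observe that on each subinterval $[t^N_{k-1},t^N_k]$ the Euler equation $(\ref{implicit skorohod equation})$ has exactly the form treated in the preceding proposition, with driving continuous semimartingale
\[
\eta_t=\sigma(y^{\Delta_N}_{t_{k-1}})(B_t-B_{t_{k-1}})+D\sigma(y^{\Delta_N}_{t_{k-1}})\sigma(y^{\Delta_N}_{t_{k-1}})B^2_{t_{k-1},t}
\]
and linear map $F=D\sigma(y^{\Delta_N}_{t_{k-1}})$. The moment hypothesis $E[\|\eta\|_{\infty,[t_{k-1},t]}^q]\le C_q(t-t_{k-1})^{q/2}$ is verified by writing the Stratonovich iterated integral as $B^2_{s,t}=\int_s^t(B_r-B_s)\otimes dB_r+\tfrac{t-s}{2}I_n$ and applying BDG. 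The proposition then yields a unique continuous adapted semimartingale $(y^{\Delta_N},\Phi^{\Delta_N})$, which by uniqueness must coincide with the pathwise limit of the Euler scheme constructed in the proof of Theorem~\ref{main theorem for p-rough path}; it also produces the moment bound $E[\|\Phi^{\Delta_N}\|_{\infty,[t_{k-1},t_k]}^q]\le C_q\Delta_N^{q/2}$.

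For steps (ii)--(iii), the It\^o--Stratonovich conversion gives on $[t_{k-1},t_k]$
\begin{align*}
dy^{\Delta_N}_t&=\sigma(y^{\Delta_N}_{t_{k-1}})dB_t+D\sigma(y^{\Delta_N}_{t_{k-1}})\sigma(y^{\Delta_N}_{t_{k-1}})(B_t-B_{t_{k-1}})dB_t+\tfrac{1}{2}\tr\bigl(D\sigma(y^{\Delta_N}_{t_{k-1}})\sigma(y^{\Delta_N}_{t_{k-1}})\bigr)dt\\
&\quad+D\sigma(y^{\Delta_N}_{t_{k-1}})(\Phi^{\Delta_N}(t)-\Phi^{\Delta_N}(t_{k-1}))dB_t+d\Phi^{\Delta_N}(t),
\end{align*}
while $dY^S=\sigma(Y^S)dB+\tfrac{1}{2}\tr(D\sigma(Y^S)\sigma(Y^S))dt+d\Phi^S$. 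Setting $Z_t=y^{\Delta_N}_t-Y^S(t)$, the convexity of $D$ yields $(Z_t,d\Phi^{\Delta_N}(t)-d\Phi^S(t))\le 0$, so applying It\^o's formula to $|Z_t|^2$ and taking expectation produces
\[
E[|Z_t|^2]\le C\int_0^tE[|Z_s|^2]ds+R(t),
\]
where $R(t)$ collects the one-step scheme errors. Splitting $\sigma(y^{\Delta_N}_{\pi^{\Delta_N}(s)})-\sigma(Y^S(s))=[\sigma(y^{\Delta_N}_{\pi^{\Delta_N}(s)})-\sigma(y^{\Delta_N}_s)]+[\sigma(y^{\Delta_N}_s)-\sigma(Y^S(s))]$, the first bracket is handled via the pathwise modulus estimate of Lemma~\ref{estimate on ydelta2} and the $J^{\Delta_N}$-bound of Lemma~\ref{estimate on ydelta lemma 2} together with the moment bound $(\ref{moment estimate for CB})$, while the second feeds the Gronwall term through $\|D\sigma\|_\infty|Z_s|$. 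The Milstein-type correction $D\sigma\cdot\sigma\cdot(B_t-B_{\pi^{\Delta_N}(t)})$ and the reflection--diffusion cross term $D\sigma(\cdot)(\Phi^{\Delta_N}(t)-\Phi^{\Delta_N}(\pi^{\Delta_N}(t)))$ each have $L^2$-norm of order $\Delta_N^{1/p}$ pointwise, so their contribution to $R(t)$ is of order $\Delta_N^{4/p}$. Gronwall's inequality then yields $\sup_{0\le t\le T}E[|Z_t|^2]\le C_T\Delta_N^{4/p}$.

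For step (iv), BDG applied to the martingale part of $dZ$ moves the supremum inside the expectation, and the sup-norm Lipschitz property of the Skorohod map on convex domains controls $\sup_t|\Phi^{\Delta_N}(t)-\Phi^S(t)|$ in terms of the driver difference. Since $4/p>1$ for $p<4$, one has $\sum_N\Delta_N^{4/p}<\infty$; Chebyshev's inequality and the Borel--Cantelli lemma then produce a full-measure set $\Omega'$ on which $\sup_t|y^{\Delta_N}_t-Y^S(t)|\to 0$, and continuity of the Skorohod map delivers the uniform convergence $\Phi^{\Delta_N}\to\Phi^S$. The main obstacle is step (iii): carefully separating the three distinct sources of error (freezing $\sigma$ at the left endpoint, the rough-path second-order correction, and the reflection--diffusion cross term) and showing that none exceeds the target rate $\Delta_N^{4/p}$. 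The essential inputs are the pathwise estimates of Section~4 combined with the $L^q$-moment bound on $\Phi^{\Delta_N}$ coming from step (i), which itself depends crucially on the convexity of $D$.
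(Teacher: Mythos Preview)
Your overall architecture matches the paper's: identify $(y^{\Delta_N},\Phi^{\Delta_N})$ as a semimartingale via the preceding proposition, run an It\^o energy estimate on $|y^{\Delta_N}_t-Y^S(t)|^2$ using convexity of $D$ to discard the reflection cross term, close by Gronwall, and upgrade to a.s.\ convergence by Borel--Cantelli. The difference is in how the scheme error is packaged. The paper does \emph{not} keep the frozen coefficient: it rewrites
\[
dy^{\Delta_N}_t=\sigma(y^{\Delta_N}_t)\circ dB(t)+dR_N(t)+d\Phi^{\Delta_N}(t),
\]
so the comparison with $Y^S$ has diffusion difference $\sigma(y^{\Delta_N}_t)-\sigma(Y^S(t))$, which is purely Lipschitz in $Z$, and \emph{all} scheme error sits in the remainder $R_N$. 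The integrand of $R_N$ is built from differences such as $(D\sigma\,\sigma)(y^{\Delta_N}_u)-(D\sigma\,\sigma)(y^{\Delta_N}_{t_{k-1}})$, which by Lemma~\ref{estimate on ydelta2} is pathwise of size $\Delta_N^{1/p}$; iterated once more against $dB$ this gives $\langle R_N\rangle_T\le C_N\Delta_N^{4/p}$, and the rate follows.

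Your decomposition, by contrast, keeps the frozen coefficient and treats the pieces separately, and this is where there is a gap. You assert that the Milstein correction $D\sigma\,\sigma\,(B_t-B_{\pi(t)})$ and the reflection--diffusion cross term, ``each of $L^2$-norm of order $\Delta_N^{1/p}$ pointwise,'' contribute $\Delta_N^{4/p}$ to $R(t)$. But a diffusion coefficient of size $\Delta_N^{1/p}$ contributes $\Delta_N^{2/p}$ to the quadratic variation, not $\Delta_N^{4/p}$. The same applies to your ``first bracket'' $\sigma(y^{\Delta_N}_{\pi(s)})-\sigma(y^{\Delta_N}_s)$ on its own. The only way to reach the target rate is to \emph{combine} the first bracket with the Milstein and reflection terms and observe that their sum is of pathwise order $\Delta_N^{2/p}$: this cancellation is exactly the $I^{\Delta}$-estimate $|I^{\Delta}_{t_k}(t)|\le C_5\,\omega(t_k,t)^{2/p}$ derived inside the proof of Lemma~\ref{estimate on ydelta lemma 2}. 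You invoke the $J$-bound for the first bracket but then add (c) and (d) as independent contributions, which either double-counts the cancellation or, read literally, only yields $E[\sup_t|Z_t|^2]\le C_T\Delta_N^{2/p}$. The paper's unfrozen-Stratonovich rewriting packages this cancellation into $R_N$ from the outset, so the rate falls out without any bookkeeping of compensating terms.
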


\begin{proof}
Semimartingales $y^{\Delta_N}_t$ and $\Phi^{\Delta_N}(t)$
satisfied the following reflected SDE
\begin{align*}
dy^{\Delta_N}_t&=\sigma\left(y^{\Delta_N}_t\right)\circ dB(t)+
R_N(t)+\Phi^{\Delta_N}(t),\qquad y^{\Delta_N}(B)_0=y_0\\
\end{align*}
where 
\begin{align*}
 R_N(t)&=
\sum_{k=1}^{l-1}M_{t_{k-1},t_k}+M_{t_{l-1},t}
\quad t_{l-1}\le t\le t_l
\end{align*}
and
\begin{align*}
M_{t_{k-1},t}&=
\int_{t_{k-1}}^t\int_{t_{k-1}}^s
\left\{
\Bigl(\left(D\sigma(y^{\Delta_N}_u)\right)(\sigma(y^{\Delta_N}_u))
-\left(D\sigma(y^{\Delta_N}_{t_{k-1}})\right)(\sigma(y^{\Delta_N}_{t_{k-1}}))
\Bigr)
\circ dB(u)\right\}\circ dB(s)\nonumber\\
&\quad +
\int_{t_{k-1}}^{t_k}\int_{t_{k-1}}^s
\left\{\Bigl(\left(D\sigma(y^{\Delta_N}_u)\right)-
\left(D\sigma(y^{\Delta_N}_{t_{k-1}})\right)\Bigr)d\Phi^{\Delta_N}(u)\right\}
\circ dB(s)\qquad t_{k-1}\le t\le t_k.
\end{align*}
$R_N(t)$ is an $\RR^d$-valued semimartingale and its quadratic variation
satisfies that
for any unit vectors $\xi$ and $q\ge 1$,
\begin{align*}
\langle (R_N,\xi)\rangle_T\le C_N \Delta_N^{4/p} ,\\
\sup_NE\left[C_N^q\right]<\infty.
\end{align*}
These estimates follow from Lemma~\ref{estimate on ydelta2}
and the estimate on the control function
$\omega$ of $B_{s,t}$.
Set 
\begin{align*}
 Z^N(t)=Y^S(t)-y^{\Delta_N}_t,\quad
k_N(t)=|Z^N(t)|^2.
\end{align*}
Then by a similar calculation to the proof of Theorem 3.1 in
 \cite{aida-sasaki},
we obtain for $0\le T'\le T$,
\begin{align*}
 E\left[\sup_{0\le t\le T'}k_N(t)\right]&\le
C_T\Delta_N^{4/p}+C_T\int_0^{T'}E\left[\sup_{0\le s\le t}k_N(s)\right]dt
\end{align*}
which shows 
(\ref{ydeltaN and YS}) and
$E\left[\sup_{0\le t\le T}k_N(t)\right]\le C_T'\Delta_N^{4/p}$ and
$\sup_{0\le t\le T}|Y^S(t)-y^{\Delta_N}_t|\to 0$
for almost all $B$.
These estimates imply that
\begin{align*}
\lim_{N\to\infty} \sup_{0\le t\le T}\left|\int_0^t\sigma\left(y^{\Delta_N}(s)\right)\circ
 dB(s)-\int_0^t\sigma(Y^S(s))\circ dB(s)\right|=0
\quad a.s.
\end{align*}
and so we have $\lim_{N\to\infty}
\sup_{0\le t\le T}|\Phi^{\Delta_N}(t)-\Phi^{S}(t)|=0\quad a.s.$
\end{proof}

\noindent
{\bf Acknowledgement}

\noindent
This research was partially supported by Grant-in-Aid for
Scientific Research (B) No.24340023.
The author would like to thank the referee for the 
valuable comments and suggestions which improve the 
quality of the paper.

\end{document}